\documentclass[10pt]{amsart} 
\usepackage{graphicx}
\usepackage{indentfirst,csquotes}

\usepackage{amssymb,amsthm,amsmath,mathtools}
\usepackage{xcolor,paralist,hyperref,fancyhdr,etoolbox}
\usepackage{scrextend}

\usepackage{bm} 
\usepackage{amsfonts} 
\usepackage{lipsum}

\usepackage[draft]{changes}

\newtheorem{theorem}{Theorem}[section]
\newtheorem{definition}[theorem]{Definition}
\newtheorem{example}[theorem]{Example}
\newtheorem{lemma}[theorem]{Lemma}
\newtheorem{proposition}[theorem]{Proposition}
\newtheorem{corollary}[theorem]{Corollary}

\newtheorem{assumption}[theorem]{Assumption}
\newtheorem{remark}[theorem]{Remark}

\newcommand{\NN}{\mathbb{N}}
\newcommand{\RR}{\mathbb{R}}
\newcommand{\EE}{\mathbb{E}}
\newcommand{\LL}{\mathbb{L}}
\newcommand{\FF}{\mathbb{F}}
\newcommand{\PP}{\mathbb{P}}
\newcommand{\HH}{\mathbb{H}}

\newcommand{\QQ}{\mathbb{Q}}
\newcommand{\barPP}{\bar{\PP}}
\renewcommand{\SS}{\mathbb{S}} 

\newcommand{\calC}{\mathcal{C}}
\newcommand{\calF}{\mathcal{F}}
\newcommand{\calP}{\mathcal{P}}
\newcommand{\calL}{\mathcal{L}}
\newcommand{\calM}{\mathcal{M}}

\newcommand{\calE}{\mathcal{E}}
\newcommand{\calS}{\mathcal{S}}
\newcommand{\calW}{\mathcal{W}}
\newcommand{\calQ}{\mathcal{Q}}
\newcommand{\calK}{\mathcal{K}}
\newcommand{\calY}{\mathcal{Y}}
\newcommand{\calU}{\mathcal{U}}
\newcommand{\frakA}{\mathfrak{A}}
\newcommand{\frakK}{\mathfrak{K}}
\newcommand{\dif}{\mathrm{d}}
\newcommand{\abs}[1]{\left|#1\right|}
\newcommand{\norm}[1]{\left\|#1\right\|}

\newcommand{\boldsym}[1]{\boldsymbol{#1}}
\newcommand{\boldnu}{\boldsym{\nu}}
\newcommand{\barcalL}{\bar{\calL}}
\newcommand{\hatalph}{\hat{\alpha}}
\newcommand{\hatPP}{\hat{\PP}}
\newcommand{\hatcalL}{\hat{\calL}}

\usepackage[
backend=biber,
style=numeric,
date=year,
sorting=nty,
giveninits=true,
doi=false,
isbn=false,
url=false,
eprint=false]{biblatex}
\hypersetup{ colorlinks=true, linkcolor=black, filecolor=black, urlcolor=black }
\renewenvironment{proof}[1][\proofname]{\noindent{\it #1. $\,$}}{\hfill$\Box$\vskip 5pt}
\def\proofname{Proof}

\numberwithin{equation}{section}

\begin{document}

\title[Global Solutions of Non-Markovian MFGC]{Mean-Field Games with Unbounded Controls:\\ A Weak Formulation Approach to Global Solutions}\thanks{Horst gratefully acknowledges financial support from DFG CRC/TRR 388 ``Rough
Analysis, Stochastic Dynamics and Related Fields''- Project ID 516748464. Sato gratefully acknowledges financial support with a scholarship from the German Academic Exchange Service (DAAD). We thank Huilin Zhang for many valuable comments and discussions.}

\author{Ulrich Horst}
 \address{Department of Mathematics and School of Business and Economics, Humboldt-Universit\"at zu Berlin, Unter den Linden 6, 10099 Berlin, Germany}
 \email{horst@math.hu-berlin.de}

\author{Takashi Sato}
 \address{Department of Mathematics, Humboldt-Universit\"at zu Berlin, Unter den Linden 6, 10099 Berlin, Germany}
\email{sato.takashi@hu-berlin.de}


\date{\today}

\maketitle

\begin{abstract}
We establish an existence of equilibrium result for a class of non-Markovian mean-field games with unbounded control space in weak formulation. Our result is based on  new existence and stability results for quadratic-growth generalized McKean-Vlasov BSDEs. Unlike earlier approaches, our approach does not require boundedness assumptions on the model parameters or time horizons and allows for running costs that are quadratic in the control variable.   
\end{abstract}

\medskip

\noindent \textbf{Keywords} Mean field games, McKean-Vlasov BSDE, BMO norm, weak formulation

\smallskip
\noindent \textbf{MSC (2020)} 
91A16,  
93E20,  
60H20,  

\renewcommand{\baselinestretch}{1.025}

\section{Introduction}

Mean field games (MFGs) provide a powerful mathematical framework for analyzing strategic interactions in large populations of agents. Introduced independently by Lasry and Lions \cite{Lasry2006,Lasry2007} and by Huang, Caines, and Malhamé \cite{Huang2007,huang2006}, MFGs describe the asymptotic behavior of Nash equilibria in stochastic differential games as the number of players tends to infinity. In this limit, each individual agent has a negligible influence on the overall system but interacts with the population through the empirical distribution of states and/or controls.

Formally, for a given measure-flow $\mu$, a representative agent in an MFG selects an admissible control $\alpha$ to maximize a cost functional of the form
\[
    J(\alpha) := \EE\left[\int_0^Tf(t,X_t,\mu_t,\alpha_t)\,dt+g(X_T,\mu_T)\right]
\]
subject to the state dynamics 
\[
    dX_t =b(t,X_t,{\mu}_t,\alpha_t)\,dt+\sigma(t,X_t,{\mu}_t,\alpha_t)\,dW_t, \quad X_{0} =x_0,
\]
and the condition that $\mu$ coincides with the law of the optimally controlled state process.

Several approaches have been developed to solve such problems. In their original formulation \cite{Lasry2006,Lasry2007}, Lasry and Lions adopted an analytic perspective and characterized equilibria through a coupled system of nonlinear partial differential equations: a backward Hamilton–Jacobi–Bellman (HJB) equation for the representative agent’s value function and a forward Fokker–Planck (Kolmogorov) equation describing the evolution of the population distribution.

The probabilistic approach to MFG theory was pioneered by Carmona and Delarue in \cite{carmona2013probabilistic}. Using a Pontryagin-type stochastic maximum principle, they showed that MFG equilibria can be characterized in terms of a McKean–Vlasov forward–backward SDE (MV-FBSDE).

A relaxed solution concept for MFGs was introduced by Lacker in \cite{lacker2015mean} and later extended to MFGs with common noise by Carmona et al.~\cite{carmona2016mean}, and to MFGs with singular controls by Fu and Horst \cite{fu_horst_2017}. In the spirit of traditional game theory the key idea when working with relaxed controls is to first establish upper hemicontinuity of the agent’s best-response correspondence with respect to a given measure-flow, and then to apply a fixed-point argument. 

Since their introduction, MFGs have found numerous applications in engineering, finance, economics, and operations research. These range from decentralized control of robotic swarms, communication networks, and automated vehicles \cite{Achdou2010,Bensoussan2013},
to portfolio choice problems under performance concerns \cite{Carmona2015,FuZhou2023MFGPortfolio,Lacker2017Portfolio}, to optimal trade execution under market impact \cite{Cardaliaguet2018,FHH-2023,FHX1,FGHP-2018}, and optimal exploitation of exhaustible resources \cite{CS-2017,PUR}.

\subsection{Our contribution}

We consider a class of non-Markovian mean-field games of control in weak formulation with possibly non-compact action sets. The weak formulation, first introduced by Carmona and Lacker \cite{carmona_lacker_2015} and further developed in \cite{carmona_wang_2021,lacker2015mean,possamai_talbi_2023,possamai2025non}, avoids the direct use of MV-FBSDEs. In this setting, the FBSDE system arising in the probabilistic formulation reduces to a generalized McKean–Vlasov BSDE of the form
\begin{equation}\label{intro-MVBSDE}
\left\{ 
\begin{split} 
\dif X_t &= \sigma_t(X) \dif W_t, \quad X_0 = x_0, \\
\dif Y_t &= -H_t(X, Z_t, \barcalL(X,Z_t)) \dif t + Z_t \dif W_t, \quad Y_T = G(X,\barcalL(X)), \\
\frac{\dif \barPP}{\dif \PP} &= \calE\left( B_\cdot (X, Z_\cdot, \barcalL(X)) \cdot W \right)_T, \quad \barcalL (\cdot) := \barPP \circ (\cdot)^{-1}. 
\end{split} \right. 
\end{equation}

We establish novel existence results for equations of this type under quadratic growth conditions on the driver. While MV-(F)BSDEs with quadratic drivers in a strong formulation have been extensively studied in the literature \cite{buckdahn2026global,hao2025mean,hu2022quadratic,jiang2024general}, 
to the best of our knowledge this is the first work to consider generalized MV-BSDEs of the form \eqref{intro-MVBSDE} with quadratic drivers.

Our results enable us to prove the existence of equilibria in MFGs with path-dependent and possibly discontinuous (in the state variable) running cost functions that may exhibit quadratic growth in the control variable. Wang and Zhang \cite{wang2018weakFBSDE} showed that optimal controls, and hence MFG equilibria, may fail to exist in strong formulation when the running cost is path-dependent and discontinuous in the state variable.

Compactness of the control space is frequently assumed in the literature, but this excludes games with purely quadratic running costs. Lacker \cite{lacker2015mean} is among the few works that relax the compactness assumption on the control space; however, mean-field interaction through the control variable is not permitted in his framework. Our setting is also more flexible than that of Carmona and Lacker \cite{carmona_lacker_2015}, as we do not require the drift $B$ to be bounded. This allows, for example, for a geometric Brownian motion with an unbounded controlled drift as the state dynamics. These generalizations come at the expense of additional boundedness assumptions on the state variable, as our analysis relies heavily on existence results for quadratic BSDEs.

Our work is strongly inspired by Possamaï and Tangpi \cite{possamai2025non}. To our knowledge, they were the first to characterize MFG equilibria in weak formulation via generalized MV-BSDEs of the form \eqref{intro-MVBSDE}. They established existence and uniqueness of equilibria, together with non-asymptotic convergence rates for finite-player games, under boundedness assumptions on the action sets and suitable Lipschitz continuity conditions, as well as differentiability assumptions on the model parameters or smallness conditions on the terminal payoff. In contrast, we do not consider finite-player games and instead focus on the existence of equilibria in MFGs with possibly non-compact action sets and quadratic drivers.

Our approach consists in replacing the law arguments in the generalized MV-BSDE with generic parameters and defining a solution mapping that associates these parameters with the laws of the corresponding BSDE solutions. Existence then reduces to proving continuity of this mapping and identifying a compact, convex subset that is mapped into itself. A key novelty of our approach lies in the use of uniform boundedness of the $Z$-components of solutions to quadratic BSDEs in the BMO norm, without requiring smallness conditions on the model parameters or compactness of the control space. This distinguishes our work from existing approaches. In \cite{carmona_lacker_2015}, the solution mapping is set-valued and involves maximized Hamiltonians, and equilibrium existence is obtained via a Measurable Maximum Theorem \cite[Theorem 18.19]{guide2006infinite}, which typically requires compact action spaces. In \cite{possamai2025non}, this issue is avoided by fixing a measurable maximizer of the Hamiltonian in advance and incorporating it into their MV-BSDE. Regardless of the specific approach, in both papers compactness of the action space is required to guarantee Lipschitz continuity of BSDE drivers as both papers rely on stability results for BSDEs with Lipschitz drivers. We use stability results for quadratic BSDEs.

Since the $Z$-component of a BSDE solution need not be continuous, compactness in the space of probability measures is difficult to establish directly. To address this difficulty, we lift the solution mapping to the space of integrable Young measures. The use of Young measures in the context of MV-BSDEs was pioneered in \cite{carmona_lacker_2015}; we extend this methodology to  quadratic settings, which raises several technical challenges.\footnote{Since the space of all Young measures is too large in our setting, due to the relaxed boundedness assumptions on the mean-field terms, we actually need to work with integrable Young measures.} First, continuity of the solution mapping with respect to the stable topology on integrable Young measures must be established. We address this by proving a new stability result for quadratic BSDEs, which is of independent interest as a stability result with respect to the driver. Second, one must identify a compact set of measures that is invariant under the solution mapping. When the model parameters are bounded in the mean-field terms, such a set can be characterized using a uniform BMO bound for solutions of quadratic BSDEs. When the model parameters are unbounded in the mean-field terms, more delicate arguments are required. In this case, uniform BMO bounds are not available to the best of our knowledge. However, by exploiting the equivalence between generalized MV-BSDEs and MV-FBSDEs, and by adapting an a priori estimate due to Hao et al.~\cite{hao2025mean}, we derive an upper bound for the BMO norm of the $Z$-component of any solution to our generalized MV-BSDE, which allows us to establish existence via a truncation argument.

\subsection{Organization of the paper}

Section \ref{sec:setup} introduces the model, key assumptions, and main results. Rather than stating all assumptions at once, we deliberately develop the theory progressively (as there are many subtleties), 
beginning with a general characterization of MFG equilibria in terms of a generalized MV-BSDE. We then specify conditions on the model parameters and admissible control sets under which the BSDE admits a comparison principle. Existence of solutions is established under standard separability conditions on the running cost, and—when model parameters are unbounded—under an additional strong quadratic growth condition on the driver. Section \ref{sec:stability} proves continuity of the solution mapping in the stable topology. Section \ref{sec:cpt} establishes a compactness result for integrable Young measures and identifies a compact, convex invariant set. Section \ref{sec:existence} proves the existence of equilibria in weak formulation.

\subsection{Notation.} 
We set $\NN^\ast := \NN \cup \{\infty\}$ and write $\calP(E)$ for the set of probability measures on a measurable space $(E, \calE)$. Expectations w.r.t.~a measure $\mu$ are denoted  $\EE^\mu$ or simply $\EE$. The $p$-th moment of $\mu$ is denoted $M_p(\mu)$, and $\calP_p(E) := \{\mu \in \calP(E) : M_p(\mu) < \infty\}.$ For two measures $\mu, \mu' \in \calP_p(E)$, we denote by $\calW_p(\mu,\mu')$ their Wasserstein distance induced by a metric on $E$.
Unless otherwise specified, $\calP_p(E)$ is endowed with the $\calW_p$-topology. 
$\LL^p (\RR^l;\QQ)$ denotes the usual space of $\RR^l$-valued measurable functions with finite $L^p$-norm under $\QQ$ and $\HH^p(\RR^l;\QQ)$ denotes the space of $\RR^l$-valued predictable processes $H$ on $[0,T]$ with $\| (\int_0^T \abs{H_s}^2 \dif s )^{\frac12}\|_{\LL^p(\RR;\QQ)} < \infty$.  
For progressively measurable processes $H$ we set $\norm{H}_{\SS^p(\RR^l;\QQ)} := \|  \sup_{t \in [0,T]} \abs{H_t}\|_{\LL^p(\RR; \PP)}$ and $\norm{H}_{\HH_{\operatorname{BMO}}^2(\RR^l;\QQ)}$ denotes the usual BMO-norm.
$\SS^p(\RR^l;\QQ)$ denotes the space of $\RR^l$-valued adapted continuous processes on $[0,T]$ with finite $\norm{\cdot}_{\SS^p(\RR^l;\QQ)}$-norm, and $\HH^2_{\operatorname{BMO}}(\RR^l;\QQ)$ is the space of $\RR^l$-valued predictable processes on $[0,T]$ with finite $\norm{\cdot}_{\HH^2_{\operatorname{BMO}}(\RR^l;\QQ)}$-norm.
When no confusion arises, we write $L^p(\QQ), \HH^p(\QQ), \SS^p(\QQ)$, and $\HH^2_{\operatorname{BMO}}(\QQ)$ for short.

\section{Setup and main results}\label{sec:setup}

Throughout this paper, all randomness is described by a probability space $(\Omega, \calF, \PP)$ that carries a $d$-dimensional Brownian $W$. We denote by $\FF = \{\calF_t\}$ the natural filtration generated by $W$ augmented by the $\PP$-null sets. Stochastic integrals of progressively measurable process $H$ w.r.t.~$W$ are denoted $H \cdot W$. We fix a time horizon $T<\infty$, denote by $\calC_d$ the set of $\RR^d$-valued continuous functions on $[0,T]$ equipped with the supremum norm $\norm{\cdot}_\infty$.

\subsection{Mean-field game in weak formulation}

To introduce our non-Markovian MFG in weak formulation, we fix measurable diffusion coefficients\footnote{We implicitly assume that the space $\calC_d \times \RR^d$ is endowed with the norm $\norm{(\cdot, \cdot)} := \norm{\cdot}_\infty + \abs{\cdot}$.}
\[
	b:[0,T] \times \calC_d \times \calP (\calC_d \times A) \times A \to \RR^d,  \quad
	\sigma:[0,T] \times \calC_d \to \RR^{d \times d}.
\]
For $x \in \calC_d$, we set $[x]_t :=(x_s)_{0 \leq s \leq t}$ and assume throughout that 
\[
    b_t(x,q,a) = b_t ({[x]_t, q \circ \pi_t^{-1},a), \quad
    \sigma_t(x) = \sigma_t([x]_t}), \quad
    \text{where} \quad \pi_t(x,a) := ([x]_t,a).
\]

For an initial state $x_0 \in \RR$, we consider the $\RR^d$-valued state process $X$ on $[0,T]$ with dynamics
\begin{equation}\label{driftless-SDE}
    \dif X_t = \sigma_t(X) \dif W_t, \quad X_0 = x_0.
\end{equation}
We assume that the volatility process satisfies the following conditions under which our SDE admits a unique strong solution $X \in \bigcap_{p < \infty} \SS^p(\PP)$ for any initial state.

\begin{assumption}\label{asm:driftless-SDE}
The volatility process $\sigma$ satisfies the following conditions:
\begin{enumerate}
	\item\label{cond:L-cont-sigma}
    $\sigma_\cdot (0) \in L^2([0,T];\RR^d)$ and $\abs{\sigma_t(x) - \sigma_t(\bar{x})} \leq K_\sigma \norm{x - \bar{x}}_\infty$
	\item
    the $d \times d$-diffusion matrix $\sigma_t(X)$ is invertible $\PP$-almost surely.
\end{enumerate}
\end{assumption}

In what follows, we denote by $\calM_1$ the class of Borel measurable measure-flows 
\[
    m:[0,T] \to \calP_1(\calC_d \times \RR^d) \quad \mbox{with} \quad  \int_0^T M_1(m_t) \dif t < \infty.
\]
We frequently write $m = (m^x, m^a)$ to indicate the marginal distributions on the path-space and control set. We fix a (possibly unbounded) set of admissible actions $A \subset \RR^k$ and denote by $\frakA$ a class of predictable control processes $\alpha: [0,T] \times \Omega \to A$ that may take values in an unbounded set. The precise definition of admissibility is given below. For now, we only assume that for any $m \in \calM_1$ and $\alpha \in \frakA$ the measure $\PP^{\alpha, m}$ with density
\begin{equation}\label{density-P-alpha-m}
	\frac{\dif \PP^{\alpha,m}}{\dif \PP} = \calE\left((\sigma^{-1}b)_\cdot \big(X, m_\cdot, \alpha_\cdot \big) \cdot W \right)_T 
\end{equation}
on $\calF_T$ is well defined, where $\calE(M) := \exp\{M - \frac12 \langle M \rangle \}$ for a continuous local martingale $M$.

\begin{remark}\label{rmk:Girsanov}
Under the measure  $\PP^{\alpha, m}$, the process 
\[
	W_t^{\alpha, m} := W_t - \int_0^t (\sigma^{-1}b)_s (X, m_s, \alpha_s) \dif s
\]
is a Brownian motion and the state process $X$ satisfies the SDE 
\[
	\dif X_t = b_t (X, m_t, \alpha_t) \dif t + \sigma_t (X)  \dif W_t^{\alpha, m}. 
\]
This justifies working with martingale state processes.
\end{remark}

To define the objective function, we fix the running and terminal payoff functions 
\[
	f:[0,T] \times \calC_d \times \calP (\calC_d \times A) \times A \to \RR, \quad \mbox{and} \quad
	g:\calC_d \times \calP (\calC_d) \to \RR.
\]
Analogously to the diffusion coefficients, we assume that 
\[
    f_t(x,q,a) = f_t([x]_t, q \circ \pi_t^{-1}, a).
\]

For any measure-flow $m = (m^x, m^a) \in \calM_1$ the representative player chooses a control to maximize the expected payoff   
\begin{equation}\label{cost-function}
	J_W (\alpha, m)
	:=  \EE^{\PP^{\alpha,m}} \left[\int_0^T f_s (X, m_s, \alpha_s) \dif s + g(X, m_T^x) \right],
\end{equation}
over the class of admissible controls $\frakA$. This implicitly assumes that the payoff function $J_W (\alpha, m)$ is well defined. We hence employ the following definition of admissibility. 

\begin{definition}
An $\RR^d$-valued, $\FF$-predictable control $\alpha$ is called admissible if the measure change \eqref{density-P-alpha-m} and the payoff functional \eqref{cost-function} are well defined for all $m \in \calM_1$. 
\end{definition}

 Our goal is to establish the existence of an equilibrium in our mean-field game in weak formulation in the sense of the following definition. 

\begin{definition}[Mean-field equilibrium]\label{def:MFE}
Let $\bar \frakA \subseteq \frakA$ be a subset of admissible controls. A control $\hatalph \in \bar \frakA$ is a mean-field equilibrium in weak formulation if there exists a measure-flow $\hat{m} \in \calM_1$ such that
\begin{gather*}
	J_W (\hatalph, \hat{m}) \geq J_W (\alpha, \hat{m}) \quad \mbox{for all} \quad \alpha \in \bar \frakA 
\end{gather*}
and the following fixed-point condition holds:	
\[
	\hat{m}_t = \PP^{\hatalph, \hat{m}} \circ (X, \hatalph_t)^{-1} \quad \text{for a.e. } t \in [0,T].
\]
\end{definition}


\subsection{Generalized McKean-Vlasov BSDEs and MFG Equilibria}\label{sec:MFG-to-FBSDE}

In this section, we derive a characterization of equilibria in weak formulation in terms of a generalized MV-BSDE. This section is strongly inspired by the work of Possamaï and Tangpi \cite{possamai2025non}. However, there are important (and sometimes subtle) differences. Since our model parameters may be unbounded or exhibit quadratic growth, the set of admissible controls is different and so is the proof of the existence of equilibria. 
The verification of the comparison principle is also different as we work with {\sl stochastic} Lipschitz drivers.


\subsubsection{A sufficient maximum principle for MFGs}

We first introduce the following abstract result that establishes a link between equilibria in weak formulation and solutions to generalized MV-BSDEs. It can be viewed as a sufficient maximum principle for MFGs in weak formulation. 

\begin{theorem}(\cite[Proposition 2.8]{possamai2025non})\label{thm:FBSDE-to-MFE}
Let $(t,x,z,m) \mapsto \Lambda_t(x,z,m)$ be a measurable maximizer of the (reduced) Hamiltonian 
\begin{equation}\label{Hamiltonian}
h_t (x,z,m,a) := f_t (x,m,a) + (\sigma^{-1}b)_t (x,m,a) \cdot z.
\end{equation}
Let $\hatalph \in \bar \frakA$ be an admissible control, let $\hat \PP \in \calP(\Omega)$ be a probability measure that is absolutely continuous w.r.t.~$\PP$, and let
\[
	(X,Y,Z) \in \SS^2(\PP) \times \SS^2(\PP) \times \HH^2(\PP)
\]
be a triple of processes that satisfy the (generalized) MV-BSDE 
\begin{equation}\label{MFE-FBSDE}
\begin{split}
\dif X_t &= \sigma_t(X) \dif W_t, \quad X_0 = x_0, \\
\dif Y_t &= -h_t(X, Z_t, \hatcalL(X, \hatalph_t), \hatalph_t) \dif t + Z_t \dif W_t, \quad Y_T = g(X,\hatcalL(X)),
\end{split}
\end{equation}
where $\hatcalL(\cdot) := \hatPP \circ (\cdot)^{-1}$ denotes the law of a random variable under $\hatPP$. If $\hatcalL(X, \hatalph_\cdot) \in \calM_1$ and $\hatPP$ is absolutely continuous with density 
\begin{equation} \label{density-hatP}
    \frac{\dif \hatPP}{\dif \PP}
    = \calE \left((\sigma^{-1}b)_\cdot \big(X, \hatcalL(X, \hatalph_\cdot), \hatalph_\cdot \big) \cdot W\right)_T,
\end{equation}
on $\calF_T$ and if the control $\hatalph$ satisfies the fixed-point condition
\begin{equation}\label{fixed-point-alpha}
	\hatalph_t = \Lambda_t (X, Z_t, \hatcalL(X, \hatalph_t))
\end{equation}
and if the BSDEs with drivers
\[
	 H^\alpha_t(z) := h_t (X, z, \hatcalL(X, \hatalph_t), \alpha_t) \quad (\alpha \in \bar \frakA) 
\]
and terminal condition
\[
    g(X,\hatcalL(X))
\]    
admit unique solutions and satisfy a comparison principle, then $\hatalph$ is an equilibrium with payoff 
\[
    Y_0 = J_W (\hatalph, \hatcalL(X, \hatalph_\cdot)).
\]
\end{theorem}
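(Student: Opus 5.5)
Fix $\hat m := \hatcalL(X,\hatalph_\cdot) \in \calM_1$. I would first check that $\hatPP = \PP^{\hatalph,\hat m}$ and that the fixed-point condition of Definition~\ref{def:MFE} holds. By \eqref{density-hatP} and \eqref{density-P-alpha-m} the two densities coincide, so $\hatPP = \PP^{\hatalph,\hat m}$. Consequently $\hat m_t = \hatPP\circ(X,\hatalph_t)^{-1} = \PP^{\hatalph,\hat m}\circ (X,\hatalph_t)^{-1}$, and the terminal law is $\hat m^x_T = \hatcalL(X)$. The remaining task is the optimality inequality $J_W(\hatalph,\hat m)\ge J_W(\alpha,\hat m)$ for every $\alpha\in\bar\frakA$, together with $Y_0 = J_W(\hatalph,\hat m)$.

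Next, for each $\alpha \in \bar\frakA$, let $(Y^\alpha,Z^\alpha)$ be the unique solution of the BSDE with driver $H^\alpha_t(z)$ and terminal value $g(X,\hatcalL(X))$. I would show that $Y^\alpha_0 = J_W(\alpha,\hat m)$ by rewriting the BSDE under $\PP^{\alpha,\hat m}$. Since $h_t(X,Z^\alpha_t,\hat m_t,\alpha_t) = f_t(X,\hat m_t,\alpha_t) + (\sigma^{-1}b)_t(X,\hat m_t,\alpha_t)\cdot Z^\alpha_t$, Remark~\ref{rmk:Girsanov} gives
\[
Y^\alpha_t = g(X,\hat m^x_T) + \int_t^T f_s(X,\hat m_s,\alpha_s)\,\dif s - \int_t^T Z^\alpha_s \,\dif W^{\alpha,\hat m}_s .
\]
Taking $\PP^{\alpha,\hat m}$-expectations at $t=0$ then yields $Y^\alpha_0 = J_W(\alpha,\hat m)$, because $\calF_0$ is trivial and $Y^\alpha_0$ is deterministic. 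This step needs the stochastic integral to be a true $\PP^{\alpha,\hat m}$-martingale, not merely a local one. If the integrability is not directly available, I would localize with stopping times $\tau_n$ and pass to the limit using the integrability of $Y^\alpha$ and of $f$ under $\PP^{\alpha,\hat m}$, which admissibility guarantees. I expect this to be the main technical obstacle; in the paper's later setting it should follow from BMO properties, since $Z^\alpha\in\HH^2_{\operatorname{BMO}}$ is invariant under the BMO Girsanov change of measure.

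Applying the same computation to $\hatalph$ gives $Y^{\hatalph}_0 = J_W(\hatalph,\hat m)$. By uniqueness, $(Y^{\hatalph},Z^{\hatalph}) = (Y,Z)$, since $(Y,Z)$ solves the BSDE with driver $H^{\hatalph}$. Hence $Y_0 = J_W(\hatalph,\hat m)$.

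Finally, I would compare drivers. By \eqref{fixed-point-alpha}, $\hatalph_t$ maximizes $a\mapsto h_t(X,Z_t,\hat m_t,a)$. Evaluated along the solution $Z$, this gives
\[
H^{\hatalph}_t(Z_t) = h_t(X,Z_t,\hat m_t,\hatalph_t) \ge h_t(X,Z_t,\hat m_t,\alpha_t) = H^\alpha_t(Z_t) \quad \dif t\otimes\dif\PP\text{-a.e.}
\]
The two BSDEs share the terminal condition, so the assumed comparison principle (in the form requiring the driver inequality only along one solution) gives $Y_0 \ge Y^\alpha_0$. Therefore $J_W(\hatalph,\hat m)\ge J_W(\alpha,\hat m)$ for all $\alpha \in \bar\frakA$, so $\hatalph$ is a mean-field equilibrium with payoff $Y_0$.
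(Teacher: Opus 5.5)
The paper gives no proof of this statement; it imports it from \cite{possamai2025non}. Your overall architecture is the standard one: identify $\hatPP=\PP^{\hatalph,\hat m}$, write $J_W(\alpha,\hat m)=Y^\alpha_0$ via Girsanov, identify $(Y,Z)=(Y^{\hatalph},Z^{\hatalph})$ by uniqueness, and conclude by comparison.

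The gap is in the last step. ``Satisfy a comparison principle'' is fixed by Definition~\ref{def:comparison-principle}, which requires the driver inequality for \emph{every} $z$. But $H^{\hatalph}_t(z)\ge H^\alpha_t(z)$ holds only at $z=Z_t$, because $\hatalph_t=\Lambda_t(X,Z_t,\hat m_t)$ maximizes $a\mapsto h_t(X,z,\hat m_t,a)$ only for $z=Z_t$. The two drivers are affine in $z$ with slopes $(\sigma^{-1}b)_t(X,\hat m_t,\hatalph_t)$ and $(\sigma^{-1}b)_t(X,\hat m_t,\alpha_t)$, and whenever these differ neither dominates the other. The ``along one solution'' comparison you invoke is therefore an extra hypothesis, not the assumed one.

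The missing idea is to pass to the maximized Hamiltonian $\hat H_t(z):=h_t\big(X,z,\hat m_t,\Lambda_t(X,z,\hat m_t)\big)=\sup_{a\in A}h_t(X,z,\hat m_t,a)$. By \eqref{fixed-point-alpha}, $\hat H_t(Z_t)=H^{\hatalph}_t(Z_t)$, so $(Y,Z)$ also solves the BSDE with driver $\hat H$ and terminal value $g(X,\hatcalL(X))$. At the same time $\hat H_t(z)\ge H^\alpha_t(z)$ for \emph{all} $z$. Comparison in the sense of Definition~\ref{def:comparison-principle}, applied to the pair $(\hat H,H^\alpha)$, then gives $Y_0\ge Y^\alpha_0$. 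This requires reading the hypothesis as covering $\hat H$ alongside the $H^\alpha$, which is the standard form of this argument.

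Alternatively, prove your one-solution version by hand, using that $H^\alpha$ is affine:
\[
Y_t-Y^\alpha_t=\int_t^T\big(H^{\hatalph}_s-H^\alpha_s\big)(Z_s)\,\dif s-\int_t^T\big(Z_s-Z^\alpha_s\big)\,\dif W^{\alpha,\hat m}_s .
\]
The $\dif s$-integrand is nonnegative, so $Y_0\ge Y^\alpha_0$ as soon as the stochastic integral is a true $\PP^{\alpha,\hat m}$-martingale.

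That martingale property is the same issue you flag in your second step, and your proposed resolution does not settle it. Admissibility gives at most integrability of $g(X,\hat m^x_T)$ and $\int_0^T|f_s(X,\hat m_s,\alpha_s)|\,\dif s$ under $\PP^{\alpha,\hat m}$. It says nothing about $\sup_t|Y^\alpha_t|$ under $\PP^{\alpha,\hat m}$. With only $Y^\alpha\in\SS^2(\PP)$ and a density in $L^1(\PP)$, nothing justifies letting $n\to\infty$ in $Y^\alpha_0=\EE^{\PP^{\alpha,\hat m}}\big[Y^\alpha_{\tau_n}+\int_0^{\tau_n}f_s(X,\hat m_s,\alpha_s)\,\dif s\big]$. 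An extra integrability condition, left implicit in the statement, is needed.

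In the paper's application (Proposition~\ref{prop:admissibility}) this integrability comes from several ingredients together:
\begin{itemize}
\item $(Y^\alpha,Z^\alpha)\in\bigcap_{p<\infty}\SS^p(\PP)\times\HH^p(\PP)$ (Proposition~\ref{prop:stocLip-BSDE-alpha}) and $Z\in\HH^2_{\operatorname{BMO}}(\PP)$;
\item the reverse H\"older inequality for the BMO density $\calE\big((\sigma^{-1}b)_\cdot(X,\hat m_\cdot,\alpha_\cdot)\cdot W\big)_T$ (Proposition~\ref{prop:BMO-chara});
\item the Burkholder--Davis--Gundy inequality.
\end{itemize}
It does not come from a BMO property of $Z^\alpha$, which is not available there: only $Z^\alpha\in\bigcap_p\HH^p(\PP)$ is known.
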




\subsubsection{Comparison}\label{sec:comparison}

The driver $H^\alpha$ is Lipschitz continuous with the stochastic constant
\begin{equation} \label{K}
	 K_\cdot := (\sigma^{-1}b)_\cdot(X, \hat{m}_\cdot, \alpha_\cdot).
\end{equation}
Well-posedness of BSDEs with stochastic Lipschitz constants has been established by Briand and Confortola \cite{briand2008bsdes} given that $K \in \HH^2_{\operatorname{BMO}}(\PP)$.
This motivates the following assumption.

\begin{assumption}\label{asm:comparison}
For any measure-flow $m \in \calM_1$ and any admissible control $\alpha \in \frakA$ we have
\[
	(\sigma^{-1}b)_\cdot (X, m_\cdot, \alpha_\cdot) \in \HH^2_{\operatorname{BMO}}(\PP).
\]
Furthermore, 
\[
	g(X,m_T^x) + \int_0^T \abs{f_s(X, m_s, \alpha_s)} \dif s \in \LL^p(\PP) \quad \mbox{for any $p \in [1,\infty)$}.
\]
\end{assumption}

The next result follows from Proposition~\ref{prop:comparison-thm} under Assumption~\ref{asm:comparison}.

\begin{proposition}\label{prop:stocLip-BSDE-alpha}
Under the above assumption, for any $m \in \calM_1$ the BSDEs 
\begin{equation}\label{BSDE-alpha}
\begin{split}
    Y_t^\alpha 
    &= g(X, m_T^x) + \int_t^T  h_s (X, Z^\alpha_s, m_s, \alpha_s)  \dif s - \int_t^T Z_s^\alpha \dif W_s
\end{split}    
\end{equation}
defined in terms of the admissible controls $\alpha \in \frak{A}$ admit unique solutions 
\[
	(Y^\alpha, Z^\alpha) \in \bigcap_{p < \infty} (\SS^p(\PP) \times \HH^p(\PP)) 
\]	
and satisfy a comparison principle.
\end{proposition}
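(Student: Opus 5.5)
The BSDE \eqref{BSDE-alpha} is linear in $Z^\alpha$. Its driver is
\[
F_s(z) := f_s(X,m_s,\alpha_s) + K_s\cdot z, \qquad K_s := (\sigma^{-1}b)_s(X,m_s,\alpha_s),
\]
which is Lipschitz in $z$ with the stochastic constant $|K_s|$. By Assumption~\ref{asm:comparison}, $K\in\HH^2_{\operatorname{BMO}}(\PP)$, and the terminal value together with the running term $\int_0^T|f_s|\,\dif s$ lies in every $\LL^p(\PP)$. This is precisely the setting of Briand--Confortola type results, which I take to be encoded in the (later stated) Proposition~\ref{prop:comparison-thm}. The plan is to check its hypotheses and, for transparency, to recover existence, uniqueness and comparison from an explicit representation.

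\textbf{Step 1: existence via Girsanov.} Since $K\in\HH^2_{\operatorname{BMO}}$, the stochastic exponential $\calE(K\cdot W)$ is a uniformly integrable martingale. Kazamaki's theorem further gives the reverse Hölder inequality $\calE(K\cdot W)_T\in\LL^{r}$ for some $r>1$. Hence $\PP^{\alpha,m}$ is well defined, and under it $W^{\alpha,m}=W-\int K\,\dif s$ is a Brownian motion. I would define
\[
Y^\alpha_t := \EE^{\PP^{\alpha,m}}\Big[g(X,m^x_T)+\int_t^T f_s\,\dif s\,\Big|\,\calF_t\Big].
\]
This is integrable by Hölder's inequality combined with the reverse Hölder property and the $\LL^p$ assumption. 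Martingale representation under $\PP^{\alpha,m}$ then provides $Z^\alpha$. Here one must take care that the filtration is generated by $W$ rather than $W^{\alpha,m}$. I would handle this by representing the $\PP$-martingale $\EE[\calE_T\,\xi\,|\,\calF_t]$ and applying the Bayes formula, which is the standard argument. Rewriting $\dif W^{\alpha,m}$ in terms of $\dif W$ shows that $(Y^\alpha,Z^\alpha)$ solves \eqref{BSDE-alpha}.

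\textbf{Step 2: integrability $\SS^p\times\HH^p$ for all $p$.} This is the main obstacle, because the Girsanov representation only gives integrability under $\PP^{\alpha,m}$. I would first bound $\sup_t|Y_t|$ using conditional Hölder's inequality: with $\calE_{t,T}:=\calE(K\cdot W)_T/\calE(K\cdot W)_t$,
\[
|Y_t| \le \EE\big[\calE_{t,T}^{r}\,\big|\,\calF_t\big]^{1/r}\,\EE\big[|\xi|^{r'}\,\big|\,\calF_t\big]^{1/r'}.
\]
The first factor is bounded uniformly by a constant, by the reverse Hölder inequality for BMO exponentials. Doob's inequality applied to the second factor then gives $Y\in\SS^p$ for every $p$. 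For $Z$, I would apply Itô's formula to $|Y|^2$ and use $2|Y_s||K_s||Z_s|\le \tfrac12|Z_s|^2+2|Y_s|^2|K_s|^2$. This reduces the problem to estimating $\EE[(\sup|Y|^2\int_0^T|K|^2\,\dif s)^{p/2}]$, which is finite because BMO processes satisfy $\int_0^T|K|^2\,\dif s\in\LL^q$ for every $q$ (energy inequalities). The BDG inequality then yields $Z\in\HH^p$.

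\textbf{Step 3: uniqueness and comparison.} Let $\delta Y,\delta Z$ denote the difference of two solutions, possibly with different controls $\alpha,\alpha'$, the same $m$, and a fixed control $\alpha$ entering through $K$. The difference solves a linear BSDE, so I would linearize $\delta Y_t=\EE^{\QQ}[\delta\xi+\int_t^T\delta f_s\,\dif s\,|\,\calF_t]$, where $\QQ$ is the Girsanov measure for $K$. Well-posedness of this identity again uses reverse Hölder together with the $\SS^p\times\HH^p$ bounds from Step 2, which guarantee that the local martingale $\int\delta Y\,\dif W$ is a true martingale under $\QQ$. Uniqueness follows by taking $\delta\xi=\delta f=0$. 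Comparison follows because a nonnegative $\delta\xi$ and $\delta f$ give $\delta Y\ge0$. This matches the form used in Theorem~\ref{thm:FBSDE-to-MFE}, where drivers $H^\alpha$ are compared against the maximized Hamiltonian along $\hatalph$.
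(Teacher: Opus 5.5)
Your proof is correct, but it is organized differently from the paper's. The paper gives no separate argument. It observes that under Assumption~\ref{asm:comparison} the affine driver $z\mapsto h_s(X,z,m_s,\alpha_s)$ falls under Proposition~\ref{prop:comparison-thm}, i.e.\ the general Briand--Confortola theory for drivers with a stochastic Lipschitz constant in $\HH^2_{\operatorname{BMO}}(\PP)$. Here that constant is $1+\abs{K}$, the $y$-monotonicity condition is void, and $p^\ast$ can be taken arbitrarily large because the data lie in every $\LL^p(\PP)$. Comparison then follows from the linearization/Girsanov identity $e_t\Delta Y_t=\EE^{\PP^\ast}[e_T\Delta\xi+\int_t^T e_s\Delta H_s(Y^j_s,Z^j_s)\dif s\mid\calF_t]$.

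Your Step~3 is exactly this identity, with $e\equiv 1$. Your Steps~1--2 instead replace the citation of the nonlinear existence theory with a direct construction, using that the driver is affine in $z$ and does not depend on $y$:
\begin{itemize}
\item the Bayes formula plus martingale representation of $\EE[\calE(K\cdot W)_T\,\xi\mid\calF_t]$ produces the solution;
\item conditional reverse H\"older plus Doob gives $\SS^p$;
\item It\^o on $\abs{Y}^2$, with the energy inequality and BDG after the usual localization, gives $\HH^p$.
\end{itemize}
The paper's route is shorter and would also cover drivers that are genuinely nonlinear with stochastic Lipschitz constant. Yours is self-contained and more elementary. It also gives two by-products: the representation $Y^\alpha_0=\EE^{\PP^{\alpha,m}}[g(X,m^x_T)+\int_0^T f_s\dif s]=J_W(\alpha,m)$, and the along-the-solution form of comparison that Theorem~\ref{thm:FBSDE-to-MFE} actually uses. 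For affine drivers, the pointwise-in-$z$ ordering of Definition~\ref{def:comparison-principle} forces the slopes to coincide, so that form is the relevant one.

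When you write out Step~3, fix two points. First, read $\delta f_s$ as $(H^{\alpha}-H^{\alpha'})_s(Z^{\alpha'}_s)$. When $\QQ$ is built from $K^\alpha$, this contains the term $(K^\alpha-K^{\alpha'})_s\cdot Z^{\alpha'}_s$, not just $f^\alpha_s-f^{\alpha'}_s$. Second, the process that must be a true $\QQ$-martingale is $\int\delta Z\,\dif W^{\QQ}$, not $\int\delta Y\,\dif W$. The bounds from Step~2, combined with reverse H\"older, supply the required integrability in both cases.
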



\subsubsection{Admissibility} It turns out that Assumption \ref{asm:comparison} is satisfied if we impose a suitable growth condition on the model parameters and take $\frakA := \HH^2_{\operatorname{BMO}}(\PP)$ as our set of admissible controls. 

\begin{assumption}\label{asm:MFG-growth-conditions}
The functions $\sigma^{-1}b$, $f$, and $\Lambda$ satisfy the following conditions for any $(t, a, z,m) \in [0,T] \times A \times \RR^d \times \calP_1(\calC_d \times A)$.
\begin{enumerate}
    \item \label{item:Lp-g-f}
    For any $p \in [1,\infty)$,
    \begin{equation*}
        g(X, \delta_0) + \int_0^T \abs{f_s (X, \delta_0, 0)} \dif s \in L^p(\PP).
    \end{equation*}
    \item \label{item:BMO-sigma-b-Lambda}
    It holds
    \begin{gather*}
        (\sigma^{-1}b)_\cdot (X, \delta_0, 0) \in \HH^2_{\operatorname{BMO}}(\PP), \quad \Lambda_\cdot (X, 0, \delta_0) \in \HH^2_{\operatorname{BMO}}(\PP).
    \end{gather*}
    \item \label{item:growth-game-parameters} 
    There exist constant $\gamma>0$ s.t.~the following growth conditions hold a.s.
    \begin{align*}
        \abs{g(X,m^x)} &\leq \abs{g(X,\delta_0)} + \gamma M_1(m^x), \\
        \abs{(\sigma^{-1}b)_t (X, m, a)}
        &\leq \abs{(\sigma^{-1}b)_t (X, \delta_0, 0)} + \gamma (\abs{a} + M_1 (m)), \\
        \abs{f_t(X, m, a)} 
        &\leq \abs{f_t (X, \delta_0, 0)} + \gamma (\abs{a}^2 + M_1(m)), \\
        \abs{\Lambda_t(X, z, m)}
        &\leq \abs{\Lambda_t (X, 0, \delta_0)} + \gamma (1 + \abs{z}).
    \end{align*}
\end{enumerate}
\end{assumption}

\begin{proposition}\label{prop:admissibility}
Under Assumption \ref{asm:MFG-growth-conditions}, the following holds.
\begin{enumerate}[(i)]
	\item Any control $\alpha \in \HH^2_{\operatorname{BMO}}(\PP)$ is admissible. 
	\item\label{item:BSDE-to-MFE} If there exist an $\FF$-predictable $A$-valued process $\hatalph$ and a solution
	\[
		(X,Y,Z,\hatPP) \in \SS^2(\PP) \times \SS^\infty(\PP) \times \HH^2_{\operatorname{BMO}}(\PP) \times \calP(\Omega) 
	\]	
	to the generalized BSDE \eqref{MFE-FBSDE}-\eqref{fixed-point-alpha} with $\hatcalL(X, \hatalph_\cdot) \in \calM_1$, then $$\hatalph \in \HH^2_{\operatorname{BMO}}(\PP),$$ and the BSDEs \eqref{BSDE-alpha} with $m_\cdot = \hatcalL(X, \hatalph_\cdot)$  admit unique solutions and satisfy a comparison principle. In particular, $\hatalph$ forms a mean-field equilibrium in weak formulation.
\end{enumerate}
\end{proposition}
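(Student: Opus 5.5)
For part (i) the plan is to fix $\alpha \in \HH^2_{\operatorname{BMO}}(\PP)$ and $m \in \calM_1$ and check the two integrability requirements of Assumption~\ref{asm:comparison}, since these are exactly what make the measure change \eqref{density-P-alpha-m} and the payoff \eqref{cost-function} well defined.

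\emph{BMO property of the drift.} By the growth bound in Assumption~\ref{asm:MFG-growth-conditions}\eqref{item:growth-game-parameters},
\[
\abs{(\sigma^{-1}b)_t(X,m_t,\alpha_t)} \le \abs{(\sigma^{-1}b)_t(X,\delta_0,0)} + \gamma\abs{\alpha_t} + \gamma M_1(m_t).
\]
The first two terms lie in $\HH^2_{\operatorname{BMO}}(\PP)$ by assumption. The third is deterministic, and I would like $\int_0^T M_1(m_t)^2\,\dif t<\infty$; this is the one delicate point, since $\calM_1$ only gives $\int_0^T M_1(m_t)\,\dif t<\infty$. If it fails, I would read the BMO norm as applied to the relevant measure-flows $t\mapsto M_1(m_t)$ in $L^2$ (for instance by strengthening $\calM_1$), or treat this term as a deterministic drift and check Novikov/Kazamaki directly. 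With the drift in $\HH^2_{\operatorname{BMO}}$, the stochastic exponential is a uniformly integrable martingale by Kazamaki's criterion, so \eqref{density-P-alpha-m} defines a probability measure. Its density also satisfies a reverse Hölder inequality, and therefore lies in $L^{q}(\PP)$ for some $q>1$.

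\emph{Integrability of the payoff.} Using the growth bounds on $g$ and $f$,
\[
\abs{g(X,m^x_T)}+\int_0^T\abs{f_s(X,m_s,\alpha_s)}\,\dif s \le \abs{g(X,\delta_0)}+\int_0^T\abs{f_s(X,\delta_0,0)}\,\dif s + \gamma\int_0^T\abs{\alpha_s}^2\,\dif s + C.
\]
The first two terms are in every $L^p(\PP)$ by Assumption~\ref{asm:MFG-growth-conditions}\eqref{item:Lp-g-f}. For the quadratic term, the energy inequalities for BMO martingales give that $\int_0^T\abs{\alpha_s}^2\,\dif s$ has all moments, and in fact exponential moments. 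Hence the payoff lies in every $L^p(\PP)$, which is the second half of Assumption~\ref{asm:comparison}. Hölder's inequality with the $L^q$-density then shows that the payoff is $\PP^{\alpha,m}$-integrable.

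For part (ii), given the solution, I would first show $\hatalph\in\HH^2_{\operatorname{BMO}}(\PP)$ from the fixed-point condition \eqref{fixed-point-alpha} and the growth bound on $\Lambda$:
\[
\abs{\hatalph_t}\le \abs{\Lambda_t(X,0,\delta_0)}+\gamma(1+\abs{Z_t}).
\]
Each term on the right lies in $\HH^2_{\operatorname{BMO}}(\PP)$, because $Z\in\HH^2_{\operatorname{BMO}}$ by hypothesis and $\Lambda_\cdot(X,0,\delta_0)\in\HH^2_{\operatorname{BMO}}$ by assumption. By part (i), $\hatalph$ is then admissible, and Assumption~\ref{asm:comparison} holds along $m=\hatcalL(X,\hatalph_\cdot)\in\calM_1$. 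Proposition~\ref{prop:stocLip-BSDE-alpha} therefore gives existence, uniqueness and comparison for the BSDEs \eqref{BSDE-alpha}.

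Finally, the density \eqref{density-hatP} coincides with that of $\PP^{\hatalph,\hat m}$, and the solution lies in $\SS^2\times\SS^2\times\HH^2$ since $\SS^\infty\subset\SS^2$ and $\HH^2_{\operatorname{BMO}}\subset\HH^2$. All hypotheses of Theorem~\ref{thm:FBSDE-to-MFE} are therefore met, and it yields that $\hatalph$ is a mean-field equilibrium. The main obstacle I expect is the handling of the deterministic $M_1(m_t)$ term in the BMO and moment estimates.
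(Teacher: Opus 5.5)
Your argument is essentially the paper's proof. For (i) you use the growth bounds, the BMO property of the drift and the reverse H\"older inequality. For (ii) you combine the growth bound on $\Lambda$ with the fixed-point condition to get $\hatalph\in\HH^2_{\operatorname{BMO}}(\PP)$, then invoke Proposition~\ref{prop:stocLip-BSDE-alpha} and Theorem~\ref{thm:FBSDE-to-MFE}. The only methodological difference is the term $\int_0^T\abs{\alpha_s}^2\,\dif s$. The paper uses Proposition~\ref{prop:BMO-norm-equivalence} to get $\alpha\in\HH^2_{\operatorname{BMO}}(\PP^{\alpha,m})$. You instead take all $\PP$-moments from the energy inequality and pass to $\PP^{\alpha,m}$ by H\"older against the $L^q$-density. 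Both work, and yours actually verifies the $L^p(\PP)$ half of Assumption~\ref{asm:comparison}, which the paper only asserts in (ii).

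The point you flag is genuine, and the paper's proof does not address it: it simply states that the density is well defined. Your second fallback (Novikov/Kazamaki) cannot rescue it. Consider the following example:
\begin{itemize}
\item take $d=k=1$, $A=\RR$, $\sigma\equiv 1$, $f=g=0$, $\Lambda\equiv 0$ and $b_t(x,m,a):=\gamma\int\abs{\bar a}\,m^a(\dif\bar a)$; this satisfies Assumption~\ref{asm:MFG-growth-conditions};
\item take the flow $m_t:=\delta_{(0,\,t^{-1/2})}$, which lies in $\calM_1$ since $\int_0^T M_1(m_t)\,\dif t=2\sqrt{T}$.
\end{itemize}
Then $(\sigma^{-1}b)_t(X,m_t,\alpha_t)=\gamma t^{-1/2}\notin L^2([0,T])$. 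So the stochastic integral in \eqref{density-P-alpha-m} does not exist for any $\alpha$, and both Novikov's and Kazamaki's criteria presuppose that it does. Hence no control is admissible in this example, and (i) fails when read literally with $\calM_1$ as defined.

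The only repair is your first option: require $\int_0^T M_1(m_t)^2\,\dif t<\infty$ for the flows considered, and likewise for $\hat m=\hatcalL(X,\hatalph_\cdot)$ in (ii). Under that condition the deterministic term is trivially in $\HH^2_{\operatorname{BMO}}(\PP)$, and the rest of your proof goes through as written.
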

\begin{proof}
\begin{enumerate} 
\item[(i)] Under Assumption \ref{asm:MFG-growth-conditions} the density $\dif \PP^{\alpha, m} \over \dif \PP$ in \eqref{density-P-alpha-m} is well defined for any $\alpha \in \HH^2_{\operatorname{BMO}}(\PP)$. To see that the cost function $J_W(\alpha, m)$ in \eqref{cost-function} is well defined, it is enough to verify 
\[
\EE^{\alpha, m} \left[ \int_0^T \abs{f_s(X, \delta_0, 0)} \dif s \right]
+ \EE^{\alpha, m} \left[ \int_0^T \abs{\alpha_s}^2 \dif s \right] 
+ \EE^{\alpha, m} \left[ \abs{g(X,m_T)} \right] < \infty.
\]
Proposition~\ref{prop:BMO-norm-equivalence} implies that $\alpha \in \HH^2_{\operatorname{BMO}}(\PP^{\alpha,m})$; hence the second term is integrable.
The reverse H\"older inequality (Proposition~\ref{prop:BMO-chara}) yileds $q > 1$ such that
\[
\EE^{\PP^{\alpha,m}}\left[ \int_0^T \abs{f_s(X, \delta_0, 0)} \dif s \right] 
\leq  ~ \EE\left[ \abs{\frac{\dif \PP^{\alpha, m}}{\dif \PP}}^q \right] \EE \left[ \left( \int_0^T \abs{f_s(X, \delta_0, 0) } \dif s \right)^p \right] < \infty,
\]
where $p>1$ is the conjugate of $q$. Similarly, $\EE^{\PP^{\alpha,m}}[\abs{g(X,m_T)}] < \infty$. 
\item[(ii)] The fact that $\hatalph \in \HH^2_{\operatorname{BMO}}(\PP)$ follows from Assumption~\ref{asm:MFG-growth-conditions} (3).
Since Assumption~\ref{asm:comparison} is satisfied under Assumption~\ref{asm:MFG-growth-conditions}, the assertion follows from Proposition~\ref{prop:stocLip-BSDE-alpha}. The equilibrium property follows from Theorem~\ref{thm:FBSDE-to-MFE}.
\end{enumerate}
\end{proof}


In view of the preceding result,  we assume that the set of admissible controls is given by 
\[
	\frakA :=  \HH^2_{\operatorname{BMO}}(\PP).
\]


\subsubsection{Existence of solutions}
We establish the existence of an equilibrium under the usual separability conditions on the drift and running cost function. 

\begin{assumption} \label{asm:separability}
The drift $b$ is independent of the law of the control. That is,    
\[
    b_t (x, m, a) = b_t (x, m^x, a).
\]
Moreover, the running cost function $f$ satisfies the separability condition 
\[
    f_t (x, m, a) = f_t^1 (x, m^x, a) + f_t^2 (x, m),
\]
for measurable functions $f^1$ and $f^2$.  
\end{assumption}

Under Assumption~\ref{asm:separability}, the fixed-point condition reduces to the purely functional relation  
\begin{equation}\label{fixed-point-alpha1}
	\hatalph_t = \Lambda_t \big( X, Z_t, \hatcalL(X) \big),
\end{equation}
which we inserted into the driver of the BSDE \eqref{MFE-FBSDE}. Moreover, the density \eqref{density-hatP} reduces to
\begin{equation} \label{density1}
    \frac{\dif \hatPP}{\dif \PP}
    = \calE \left((\sigma^{-1}b)_\cdot \big( X, \hatcalL(X), \hatalph_\cdot \big) \cdot W \right)_T.
\end{equation}

In what follows, a generic variable $q = (q^x,q^z) \in \calP_1(\calC_d \times \RR^{d})$ represents the joint law of the process $(X,Z)$ and
\[
    \theta_t^q (x,z) := \big(x, \Lambda_t(x,q^x,z) \big). 
\]
Finding an equilibrium in weak formulation then reduces to first solving the MV-BSDE  
\begin{equation}\label{general-MVBSDE}
\begin{split}
    \dif X_t &= \sigma_t(X) \dif W_t, \quad
    X_0 = x_0, \\
    \dif Y_t &= -H_t(X, Z_t, \barcalL(X,Z_t)) \dif t + Z_t \dif W_t, \quad Y_T = G(X,\barcalL(X)), \\
    \frac{\dif \barPP}{\dif \PP} &= \calE\left( B_\cdot (X, Z_\cdot, \barcalL(X)) \cdot W \right)_T, \quad
    \barcalL (\cdot) := \barPP \circ (\cdot)^{-1},
\end{split}
\end{equation}
with terminal condition $G(x,q^x) := g(x,q^x)$ and driver/maximized Hamiltonian 
\[
	H_t(x,z,q) := F_t(x,z,q) + z \cdot B_t(x,z,q^x) 
\]
where 
\begin{align*}
	F_t(x,z,q) & := f_t \big(x, q \circ (\theta_t^q)^{-1} ,\Lambda_t(x,z,q^x) \big), \quad
    B_t(x,z,q^x) &:= (\sigma^{-1} b)_t \big( x,q^x,\Lambda_t(x,z,q^x) \big)
\end{align*}
and then verifying the integrability condition 
$
    \barcalL(X, \hatalph_\cdot) \in \calM_1.
$

\begin{remark}
    Under Assumptions~\ref{asm:MFG-growth-conditions} and \ref{asm:separability}, the above integrability condition holds if $\barcalL(X,Z_\cdot) \in \calM_1$. In fact, since $\hatalph_t = \Lambda_t \big( X, Z_t, \hatcalL(X) \big)$, Condition~\eqref{item:growth-game-parameters} in Assumption~\ref{asm:MFG-growth-conditions} yields 
   \begin{align*}
    \int_0^T& M_1 \big(\hatcalL(X, \hatalph_s) \big) \dif s \leq C + \EE^{\barPP} \left[ \int_0^T \abs{\Lambda_s(X, 0, \delta_0)} \dif s \right] + \gamma_\Lambda \EE^{\barPP} \left[\int_0^T \abs{Z_s} \dif s \right].
    \end{align*}
    The integrability $\hatcalL(X, \hatalph_\cdot) \in \calM_1$ follows from Condition \eqref{item:BMO-sigma-b-Lambda} in Assumption~\ref{asm:MFG-growth-conditions}.
\end{remark}

 The preceding remark motivates the following definition. 

\begin{definition}\label{def:sol-MVFBSDE}
We call a quadruple $$(X,Y,Z, \barPP) \in \SS^2(\PP) \times \SS^2 (\PP) \times \HH^2_{\operatorname{BMO}}(\PP) \times \calP(\Omega)$$ a solution to the generalized MV-BSDE \eqref{general-MVBSDE} if $\barPP \ll \PP$, the measure-flow $\barcalL(X,Z_\cdot)$ belongs to $\calM_1$, and the process $(X, Y, Z)$ satisfies the generalized MV-BSDE \eqref{general-MVBSDE}. 
\end{definition}

It turns out that our MV-BSDE admits a solution under standard assumptions on model parameters if the parameters are bounded in the mean field term.  

\begin{assumption}\label{asm:BSDE-standing-asm}
The model parameters satisfy the following assumptions for any $(t, x) \in [0,T] \times \calC_d$ and $(z,q), (\bar{z}, \bar{q}) \in \RR^d \times \calP_1(\calC_d \times \RR^d)$. 
    \begin{enumerate}
    \item \label{cond:bdd-in-x} The functions $B$, $G$, and $F$ are bounded in $x$ and of linear growth in $q$; for some $L>0$, 
    \[
    \abs{G(x,q^x)} + \abs{B_t(x,0,q^x)} + \abs{F_t(x,0,q)} \leq L (1 + M_1(q)).
    \]
    \item \label{cond:Lip-continuity} The functions
    $B$ and $F$ are (locally) Lipschitz continuous in $z$; for some $K>0$,
    \begin{align*}
        \abs{B_t(x,z,q^x) - B_t(x,\bar{z},q^x)}
        &\leq K \abs{z-\bar{z}}, \\
        \abs{F_t(x,z,q) - F_t(x,\bar{z},q)}
        &\leq K (1 + \abs{z} + \abs{\bar{z}})(\abs{z-\bar{z}}).
    \end{align*}
    \item The following mappings are continuous: 
    \[
     q^x \mapsto B_t(x,z,q^x), \quad q^x \mapsto G(x,q^x), \quad q \mapsto F_t(x,z,q).
    \]
\end{enumerate}
\end{assumption}

We now state the first main result of this paper. The proof is given in Section 5. 

\begin{theorem}\label{thm:existence-with-bdd}
    Under Assumption~\ref{asm:BSDE-standing-asm}, if the model parameters are bounded in the mean-field term in the sense that
    \begin{equation}\label{bdd-in-mf}
    \abs{G(x,q^x)} + \abs{B_t(x,0,q^x)} + \abs{F_t(x,0,q)} \leq L \\
    \end{equation}
    for some $L>0$, then the generalized MV-BSDE \eqref{general-MVBSDE} admits a solution.
\end{theorem}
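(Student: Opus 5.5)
We prove Theorem~\ref{thm:existence-with-bdd} by a Schauder--Tychonoff fixed point argument on a space of laws, in the way outlined in the introduction. The first step freezes the mean-field arguments. For a generic parameter we solve the decoupled quadratic BSDE
\[
Y_t = G(X,\nu^x) + \int_t^T H_s\big(X, Z_s, \nu_s\big)\,\dif s - \int_t^T Z_s \dif W_s .
\]
Here the parameter is a Young measure $\nu$ on $[0,T]\times\calC_d\times\RR^d$ whose time marginal is Lebesgue measure, and $\nu_t$ is its disintegration. The parameter lives in the space of integrable Young measures endowed with the stable topology.

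By Assumption~\ref{asm:BSDE-standing-asm} together with \eqref{bdd-in-mf}, we have $|G|\le L$. We also have $|H_t(x,z,q)|\le C(1+|z|^2)$ with a constant $C$ depending only on $L,K$. The driver is locally Lipschitz in $z$ with quadratic growth of the Lipschitz constant. Kobylanski-type results therefore give a unique solution $(Y^\nu,Z^\nu)\in\SS^\infty(\PP)\times\HH^2_{\operatorname{BMO}}(\PP)$. The standard estimate, obtained by applying It\^o's formula to $e^{\beta Y}$, then yields
\[
\|Y^\nu\|_{\SS^\infty}\le R, \qquad \|Z^\nu\|_{\HH^2_{\operatorname{BMO}}(\PP)}\le R,
\]
with $R$ independent of $\nu$. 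This uniformity is exactly where boundedness in the mean-field term is used.

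Since $|B_t(x,z,q^x)|\le L+K|z|$, the process $B(X,Z^\nu,\nu^x)$ is in BMO with norm at most $C(1+R)$. Hence $\barPP^\nu:=\calE(B\cdot W)_T\cdot\PP$ is a well-defined probability measure. By the reverse H\"older inequality (Proposition~\ref{prop:BMO-chara}), its density lies in $L^{q}(\PP)$ for some $q>1$, with a bound that is uniform in $\nu$. We define the solution mapping $\Phi(\nu)$ as the Young measure $\dif t\,\barPP^\nu\circ(X,Z^\nu_t)^{-1}$. A fixed point $\nu=\Phi(\nu)$ gives a solution in the sense of Definition~\ref{def:sol-MVFBSDE}. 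Indeed, then $\barcalL(X,Z_t)=\nu_t$ for a.e.\ $t$, and the following bound holds by H\"older together with the uniform density and BMO bounds:
\[
\int_0^T M_1(\nu_t)\,\dif t\le C\Big(1+\EE^{\barPP}\big[\|X\|_\infty\big]+\EE^{\barPP}\Big[\int_0^T|Z_t|\,\dif t\Big]\Big)<\infty .
\]

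The second step identifies a compact convex invariant set. Let $\calK$ be the set of Young measures $\nu$ such that:
\begin{itemize}
\item the $\calC_d$-marginal of each $\nu_t$ is the same measure $\mu\in\calP_1(\calC_d)$, whose density w.r.t.\ $\PP\circ X^{-1}$ is bounded in $L^q$;
\item $\int_0^T\!\int |z|^{1+\epsilon}\,\nu_t(\dif x,\dif z)\,\dif t\le C_R$;
\item $\int \|x\|_\infty^{1+\epsilon}\,\dif\mu\le C_R$.
\end{itemize}
The uniform bounds from the first step show $\Phi(\calK)\subset\calK$. On the $x$-part, tightness follows because these laws are absolutely continuous w.r.t.\ the fixed law of $X$ with uniformly integrable densities. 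The moment conditions give uniform integrability, so by the Young-measure analogue of Prokhorov's theorem (to be proved in Section~\ref{sec:cpt}) $\calK$ is compact for the stable topology on integrable Young measures. Convexity holds because all three constraints are linear or convex in $\nu$.

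The third step, which I expect to be the main obstacle, is continuity of $\Phi$. Take $\nu^n\to\nu$ stably. By continuity of $B,F,G$ in the measure arguments (Assumption~\ref{asm:BSDE-standing-asm}(3)), the drivers converge pointwise in $(t,\omega,z)$. Stable convergence only gives weak-type convergence in $t$, but since $\theta^q$ is applied at each fixed $t$ this should suffice after disintegration. Pointwise convergence of the drivers and terminal values, together with the uniform quadratic bounds, should yield $Z^{\nu^n}\to Z^\nu$ in $\HH^2(\PP)$ and $Y^{\nu^n}\to Y^\nu$. To get this I would prove a stability result for quadratic BSDEs with respect to the driver. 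The route is to linearize the difference of two solutions using the BMO Lipschitz coefficient $K(1+|Z^n|+|Z|)$, change measure via Girsanov, and control the driver difference $\delta H^n(Z)$ by dominated convergence, invoking the uniform BMO bounds and reverse H\"older inequalities.

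Then $B(X,Z^{\nu^n},\nu^{n,x})\to B(X,Z^\nu,\nu^x)$ in $\HH^2$, with uniform BMO bounds. This gives convergence of the stochastic exponentials in $L^1(\PP)$, and hence $\barPP^{\nu^n}\circ(X,Z^{\nu^n}_t)^{-1}\,\dif t\to\barPP^{\nu}\circ(X,Z^\nu_t)^{-1}\,\dif t$ stably. Schauder--Tychonoff in the locally convex space of signed measures containing $\calK$ then gives a fixed point, completing the proof.
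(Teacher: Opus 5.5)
Your Steps 1 and 2 are fine, but Step 3 fails for the parameter space you chose, and this is precisely the difficulty the paper's construction is built to avoid.

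\textbf{The gap.} You parametrize by a Young measure on $[0,T]\times(\calC_d\times\RR^d)$, i.e.\ a flow $t\mapsto\nu_t\in\calP_1(\calC_d\times\RR^d)$ with the stable topology, and you insert $\nu_t$ directly into $F_t(x,z,\cdot)$. Stable convergence only controls quantities $\int_A\int\varphi\,\dif\nu^n_t\,\dif t$ that are \emph{linear} in $\nu^n_t$. It does not give $\nu^n_t\to\nu_t$ for a.e.\ $t$. So your claim that the drivers converge pointwise in $(t,\omega,z)$ is false, and ``after disintegration'' does not rescue it.

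Here is a concrete failure. The map $F_t(x,z,q)=f_t(x,q\circ(\theta^q_t)^{-1},\Lambda_t(x,z,q^x))$ is only assumed continuous in $q$, and it is nonlinear in general, e.g.\ if $f$ depends on a nonlinear function of the mean control. Take $\nu^n_t=\mu\otimes\delta_a$ on $\{\sin(nt)>0\}$ and $\nu^n_t=\mu\otimes\delta_b$ otherwise, with $\mu$ admissible and $|a|,|b|$ small.
\begin{itemize}
\item These $\nu^n$ lie in your $\calK$.
\item They converge stably to $\nu_t=\mu\otimes\tfrac12(\delta_a+\delta_b)$.
\item Yet $\int_t^TF_s(X,z,\nu^n_s)\,\dif s\to\tfrac12\int_t^T\big(F_s(X,z,\mu\otimes\delta_a)+F_s(X,z,\mu\otimes\delta_b)\big)\dif s$.
\item This limit differs in general from $\int_t^TF_s(X,z,\nu_s)\,\dif s$.
\end{itemize}
Hence $\Phi$ is not continuous on $\calK$, and Schauder--Tychonoff cannot be applied. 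Switching to a strong-in-time topology, such as $\int_0^T\calW_1(\nu^n_t,\nu_t)\,\dif t\to0$, would restore continuity but destroy compactness. The reason is that $Z$ has no time regularity, so nothing controls $t\mapsto\barPP^\nu\circ(X,Z^\nu_t)^{-1}$ in time.

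\textbf{The missing idea: relax the law argument itself.} This is what the paper does.
\begin{itemize}
\item Parameters are $(\mu,\boldnu)\in\calP_1(\calC_d)\times\calY_1([0,T]\times\calP_1(\calC_d\times\RR^d))$, so $\boldnu(\dif t,\dif q)=\nu_t(\dif q)\,\dif t$ with $\nu_t$ a probability on the space of laws.
\item The BSDE uses the averaged driver $\int\bar H_s(X,z,\mu,q)\,\nu_s(\dif q)$. This is linear in $\boldnu$, hence stably continuous via Proposition~\ref{prop:Portmanteau-thm-intYoung}, given continuity and linear growth of $F$ in $q$.
\item The output is $\Phi(\mu,\boldnu)=(\calL^{\mu,\boldnu}(X),\delta_{\calL^{\mu,\boldnu}(X,Z^{\mu,\boldnu}_t)}(\dif q)\,\dif t)$.
\item Compactness becomes tightness of $t\mapsto q_t$ in the Polish space $\calP_1(\calC_d\times\RR^d)$. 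This follows from the $L^{1+\delta}$ density bound on the $x$-marginal and the $z$-moment bound (Theorem~\ref{thm:general-cpt}).
\item The invariant set is the closed convex hull $\calK^*$ of these Dirac-type Young measures.
\item At a fixed point, $\nu_t$ is a Dirac mass for a.e.\ $t$ by uniqueness of disintegration. The relaxed driver therefore collapses to $H_t(X,Z_t,\barcalL(X,Z_t))$.
\end{itemize}
With this change of parameter space, the rest of your argument carries over essentially as in Theorem~\ref{thm:general-stability} and Corollary~\ref{cor:FP-continuity}. That includes the uniform BMO bound, reverse H\"older, stability of quadratic BSDEs via linearization and Girsanov, and total-variation convergence of the densities.
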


If the model parameters are unbounded in the mean-field terms, more refined conditions are required. In particular, additional growth and boundedness conditions on the drift function are required if the running costs are unbounded in the law of the state-control process. 

\begin{assumption}\label{asm:strong-quad}
The functions $B$ and $F$ satisfy the following conditions for any $(t, x, z, q) \in [0,T] \times \calC_d \times \RR^d \times \calP_1(\calC_d \times \RR^d)$. 
\begin{enumerate}
    \item \label{cond:relax-truncation}
    Either of the following conditions holds for some $\gamma > 0$:
    \begin{enumerate}
        \item The function $F$ is bounded in the law of state;
        \[
        \abs{F_t(x,0,q)} \leq \gamma (1 + M_1 (q^z)).
        \]
        \item \label{cond:unbdd-in-mx}
        The functions $B$ and $F$ satisfy the linear growth conditions;
        \begin{align*}
            \abs{(\sigma B)_t (x,z,q^x)}  \leq \gamma (1 + \abs{z} + M_1(q^x)), \quad
            \abs{F_t(x,0,q)}  \leq \gamma (1 + M_1(q)).
        \end{align*}    
    \end{enumerate}
    \item \label{cond:strong-quad}
    The function $F$ satisfies the strictly quadratic growth condition; for some $\tilde \gamma>0$ either 
    \begin{gather*}
        F_t(x,z,q) \leq -\frac{\tilde{\gamma}}{2} \abs{z}^2 + F_t(x,0,q)
    \quad \mbox{or} \quad 
        F_t(x,z,q) \geq \frac{\tilde{\gamma}}2 \abs{z}^2 - F_t(x,0,q).
    \end{gather*}
\end{enumerate}
\end{assumption}

The following is the main result of this paper. The proof is again given in Section 5.

\begin{theorem}\label{thm:relax-bdd}
Under Assumptions~\ref{asm:BSDE-standing-asm} and \ref{asm:strong-quad}, the generalized MV-BSDE \eqref{general-MVBSDE} admits a solution
 \[
     (X,Y,Z,\barPP) \in \SS^2 (\PP) \times \SS^\infty (\PP) \times \HH^2_{{\operatorname{BMO}}} (\PP) \times \calP(\Omega).
 \]
\end{theorem}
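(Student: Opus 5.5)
The plan is to reduce Theorem~\ref{thm:relax-bdd} to Theorem~\ref{thm:existence-with-bdd} by truncating the mean-field arguments, and then to show that the truncation is inactive for solutions of the truncated problem. For $N>0$ let $\rho_N:\calP_1(\calC_d\times\RR^d)\to\calP_1(\calC_d\times\RR^d)$ be a continuous map that leaves measures with $M_1(q)\le N$ unchanged and pushes any other $q$ onto a measure with $M_1(\rho_N(q))\le N$. A concrete choice is the pushforward of $q$ under the radial projection of $(x,z)$ onto a ball, with the radius chosen continuously in $q$ so that the first moment stays at most $N$. Define $G^N(x,q^x):=G(x,\rho_N(q)^x)$, $B^N$ and $F^N$ in the same way. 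By the linear growth in Assumption~\ref{asm:BSDE-standing-asm}(1), the truncated coefficients satisfy the boundedness condition \eqref{bdd-in-mf} with constant $L(1+N)$. They keep the Lipschitz bounds in $z$ and the continuity in $q$, and they inherit Assumption~\ref{asm:strong-quad} with the same constants $\gamma,\tilde\gamma$, because $M_1(\rho_N(q))\le M_1(q)$. Theorem~\ref{thm:existence-with-bdd} therefore gives a solution $(X,Y^N,Z^N,\barPP^N)$ of the truncated MV-BSDE.

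The key step is an $N$-independent a priori bound, of the form $M_1(\barcalL^N(X,Z^N_t))\le C$ for a.e.\ $t$ and $\sup_t M_1(\barcalL^N(X_t))\le C$. Choosing $N>C$ then removes the truncation, so the truncated solution solves \eqref{general-MVBSDE}.

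To get the bound, I would rewrite the problem under $\barPP^N$. By Girsanov, $\bar W:=W-\int B^N\dif s$ is a $\barPP^N$-Brownian motion. Then $X$ solves $\dif X=\sigma B^N\dif t+\sigma\dif\bar W$, and $Y^N$ solves a BSDE under $\barPP^N$ with driver $F^N$, since the term $z\cdot B$ is absorbed. This is the equivalence with an MV-FBSDE. I would then follow Hao et al.~\cite{hao2025mean} and split according to Assumption~\ref{asm:strong-quad}(1).

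Case (a): $F$ does not depend on the state law beyond $q^z$. I would apply the exponential transform $e^{\pm\tilde\gamma Y}$, or equivalently Itô's formula to $Y$ together with the one-sided strict quadratic bound (2). Say $F\le -\frac{\tilde\gamma}2|z|^2+F(x,0,q)$. Integrating from $t$ to $T$ under $\barPP^N$ gives
\[
\frac{\tilde\gamma}{2}\EE^{\barPP^N}\Big[\int_t^T|Z_s|^2\dif s\,\Big|\,\calF_t\Big]\le 2\|Y^N\|_\infty+\gamma\int_t^T\big(1+M_1(q^z_s)\big)\dif s .
\]
Here $M_1(q^z_s)=\EE^{\barPP^N}|Z_s|\le \epsilon\,\EE^{\barPP^N}|Z_s|^2+C_\epsilon$, so taking expectations and absorbing the $\epsilon$-term yields $\EE^{\barPP^N}\int_0^T|Z|^2\le C(1+\|Y^N\|)$. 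For the $Y$-bound, note that $|G|\le L(1+M_1(q^x))$, and $M_1(q^x)$ is controlled by $X$ under $\barPP^N$. I would close the loop with a Gronwall argument on $\phi(t):=\EE^{\barPP^N}\int_t^T|Z|^2$, combined with an $L^\infty$ estimate on $Y$ obtained from the strict quadratic growth, which gives a one-sided bound by comparison.

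Case (b): $X$ may have a drift of linear growth under $\barPP^N$. Here I would use that $\sigma$ is Lipschitz and that $\sigma B$ has linear growth in $(z,M_1(q^x))$. This gives $\EE^{\barPP^N}\|X\|_\infty\le C\big(1+\EE^{\barPP^N}\int_0^T|Z|\dif s\big)$ by BDG and Gronwall. I would then couple this with the $Z$-energy estimate above, using $F(x,0,q)\le\gamma(1+M_1(q))$. Since the estimates are linear in the first moments while the coercive term is quadratic in $Z$, Young's inequality should close the system without any smallness condition.

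Finally, from the $\barPP^N$-energy bound and the conditional version of the same Itô argument, I would deduce $\|Z^N\|_{\HH^2_{\operatorname{BMO}}(\barPP^N)}\le C$ and $\|Y^N\|_{\SS^\infty}\le C$, and transfer the BMO bound back to $\PP$ via Proposition~\ref{prop:BMO-norm-equivalence}. This confirms that $(X,Y,Z,\barPP)\in\SS^2\times\SS^\infty\times\HH^2_{\operatorname{BMO}}\times\calP(\Omega)$ and that $\barcalL(X,Z_\cdot)\in\calM_1$.

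I expect the main obstacle to be the a priori estimate in case (b). Two issues arise there: the terminal bound for $Y$ depends on $M_1(\barcalL(X))$, which in turn depends on $Z$ through the drift, and the one-sided quadratic bound only controls $Z$ through $Y$. Making this circular chain close with constants independent of $N$ is the delicate point. I would first try a weighted estimate of $\EE^{\barPP^N}[\tilde\gamma^{-1}Y_t+\|X\|_{t}]$ with time-dependent weights $e^{\lambda t}$, chosen so that the coercive $|Z|^2$ term dominates the linear cross terms.
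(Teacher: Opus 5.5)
\textbf{There is a genuine gap: the truncation never becomes inactive.} Your reduction breaks at the step where choosing $N>C$ is supposed to switch the truncation off.

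For $G$ and $B$ this step is harmless. They only see $q^x=\barcalL^N(X)$, the law of the whole path, and $M_1(\barcalL^N(X))=\bar\EE^N[\norm{X}_\infty]$ is bounded uniformly in $N$.

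For $F$ it fails. $F$ is evaluated at $\barcalL^N(X,Z^N_t)$ for each $t$, so you would need $\bar\EE^N[\abs{Z^N_t}]\le C$ for a.e.\ $t$, uniformly in $N$. The estimates you sketch, and Proposition~\ref{prop:new-apriori-bound}, only control $Z^N$ in $\HH^2_{\operatorname{BMO}}(\barPP^N)$, i.e.\ they bound $\int_0^T\bar\EE^N[\abs{Z^N_t}^2]\dif t$. Even in your Case (a) computation, the term $M_1(q^z_s)$ is absorbed only after integrating in time.

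A bound that is pointwise in $t$ is in fact false in general, because $G$ is only bounded and measurable in $x$. Take $d=1$, $\sigma\equiv1$, $B\equiv0$, $G(x,q^x)=\sin(1/x_T)$ and $F_t(x,z,q)=-\tfrac{\tilde\gamma}{2}\abs{z}^2+M_1(q^z)$. This satisfies Assumptions~\ref{asm:BSDE-standing-asm} and \ref{asm:strong-quad} (with (1a)).
\begin{itemize}
\item The mean-field term is a deterministic additive shift. Hence every solution of the original equation, and of any truncated one, has the same $Z$, given by the Cole--Hopf transform: $Z_t=-\tilde\gamma^{-1}(\partial_x u/u)(t,X_t)$ with $u(t,\cdot)=P_{T-t}\,e^{-\tilde\gamma\sin(1/\cdot)}$.
\item Therefore $\EE\abs{Z_t}$ is bounded below by a constant times the total variation of $u(t,\cdot)$ on $[-1,1]$.
\item That variation tends to $\infty$ as $t\uparrow T$, by lower semicontinuity of variation, since $e^{-\tilde\gamma\sin(1/x)}$ is not of bounded variation near $0$.
\end{itemize}
So for every $N$, writing $q_t=\calL(X,Z_t)$, you get $M_1(\rho_N(q_t)^z)\le N<\EE\abs{Z_t}=M_1(q_t^z)$ on a set of times of positive measure. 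No truncated solution solves \eqref{general-MVBSDE}.

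\textbf{What is missing is a passage to the limit that uses only time-integrated bounds.} This is how the paper proceeds.
\begin{itemize}
\item The uniform estimates of Proposition~\ref{prop:new-apriori-bound}, transferred to $\PP$ via Proposition~\ref{prop:equiv-sol-P-barP}, make $\mu^N=\barcalL^N(X)$ and $\boldnu^N=\delta_{\barcalL^N(X,Z^N_t)}(\dif q)\dif t$ relatively compact in $\calP_1\times\calY_1$ (Corollary~\ref{cor:cpt-laws}).
\item By construction $(\mu^N,\boldnu^N)=\Phi(\bar\mu^N,\bar\boldnu^N)$, where $(\bar\mu^N,\bar\boldnu^N)$ are the truncated inputs.
\item Lemma~\ref{lem:preliminary-convergence-cN} needs only $\sup_N\int_0^T M_2(q^N_t)\dif t<\infty$. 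It shows that these inputs converge to the same limit $(\mu^*,\boldnu^*)$.
\item Continuity of $\Phi$ (Corollary~\ref{cor:FP-continuity}, which rests on the stability Theorem~\ref{thm:general-stability} for quadratic BSDEs) then gives $\Phi(\mu^*,\boldnu^*)=(\mu^*,\boldnu^*)$.
\end{itemize}
Your $\rho_N$ would fit into this scheme. Since $\calW_1(\rho_N(q),q)\le M_1(q)\mathbf 1_{\{M_1(q)>N\}}$, you get $\int_0^T\calW_1(\rho_N(q^N_t),q^N_t)\dif t\le N^{-1}\int_0^T M_2(q^N_t)\dif t\to0$. But the inactivity shortcut has to be replaced by this compactness-and-continuity step.
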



\subsubsection{Generalized MV-BSDEs and MV-FBSDEs}
We proceed with an equivalence result between generalized MV-BSDEs and MV-FBSDEs that will be important for our subsequent analysis. The equivalence is intuitive and is often implicitly assumed in the literature. However, there is an important subtlety on which we now elaborate. The MV-BSDE \eqref{general-MVBSDE} under $\PP$ can be rewritten into the MV-FBSDE under $\barPP$ (see also Remark~\ref{rmk:Girsanov}): 
\begin{equation}\label{general-MVFBSDE}
\begin{split}
    \dif X_t &= (\sigma B)_t (X, Z_t, \barcalL(X)) \dif t + \sigma_t(X) \dif \bar{W}_t, \quad X_0 = x_0, \\
    \dif Y_t &= -F_t(X, Y_t, Z_t, \barcalL(X, Z_t)) \dif t + Z_t \dif \bar{W}_t, \quad Y_T = G(X,\barcalL(X)).
\end{split}
\end{equation}
where 
\[
    \bar{W}_t := W_t - \int_0^t B_s (X, Z_s, \barcalL(X)) \dif s.
\]
This suggests that solving the generalized MV-BSDE \eqref{general-MVBSDE} is equivalent to solving the MV-FBSDE \eqref{general-MVFBSDE}. The subtle issue is the different solution spaces; solving the generalized MV-BSDE under $\PP$ requires integrability conditions under $\PP$ while solving the associated MV-FBSDE requires integrability under $\barPP$. The next proposition shows that the two equations are indeed equivalent under the assumptions made in this section. 

\begin{proposition}\label{prop:equiv-sol-P-barP}
For any measure $\barPP \in \calP(\Omega)$ and any triple $(X,Y,Z)$ of $\FF$-adapted processes, the following conditions are equivalent under Assumption~\ref{asm:BSDE-standing-asm}:
\begin{enumerate}
    \item the quadruple $(X,Y,Z,\barPP)$ solves the McKean-Vlasov BSDE \eqref{general-MVBSDE} and satisfies
    \[
    (X,Y,Z) \in \SS^2(\PP) \times \SS^\infty(\PP) \times \HH^2_{\operatorname{BMO}}(\PP),
    \]
    \item the quadruple $(X,Y,Z,\barPP)$ solves the McKean-Vlasov FBSDE \eqref{general-MVFBSDE} and satisfies
    \[
    (X,Y,Z) \in \SS^2(\barPP) \times \SS^\infty(\barPP) \times \HH^2_{\operatorname{BMO}}(\barPP),
    \]
\end{enumerate}
If a quadruple $(X,Y,Z,\barPP)$ satisfies condition (1) or (2), then for any constants $L, \bar{L} > 0$ there exist positive constants $\gamma_\PP, \gamma_{\barPP} > 0$ such that
\begin{equation}\label{equiv-norm-Z}
\begin{aligned}
    \norm{Z}_{\HH^2_{\operatorname{BMO}}(\PP)} \leq L
    &\quad \Rightarrow \quad
    \norm{Z}_{\HH^2_{\operatorname{BMO}}(\barPP)} \leq \gamma_\PP \norm{Z}_{\HH^2_{\operatorname{BMO}}(\PP)}, \\
    \norm{Z}_{\HH^2_{\operatorname{BMO}}(\barPP)} \leq \bar{L}
    &\quad \Rightarrow \quad
    \norm{Z}_{\HH^2_{\operatorname{BMO}}(\PP)} \leq \gamma_{\barPP}\norm{Z}_{\HH^2_{\operatorname{BMO}}(\barPP)}.
\end{aligned}
\end{equation}
\end{proposition}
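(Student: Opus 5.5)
The plan is to pass between $\PP$ and $\barPP$ by Girsanov's theorem, using the BMO property of the Girsanov kernel $\beta_t := B_t(X,Z_t,\barcalL(X))$. The key input is the classical fact (Kazamaki; cf.\ Propositions~\ref{prop:BMO-norm-equivalence} and \ref{prop:BMO-chara}). If $\beta \in \HH^2_{\operatorname{BMO}}(\PP)$, then $\calE(\beta\cdot W)$ is a uniformly integrable martingale satisfying a reverse H\"older inequality $R_q(\PP)$ for some $q>1$. Moreover, $\HH^2_{\operatorname{BMO}}(\PP)=\HH^2_{\operatorname{BMO}}(\barPP)$, with norm equivalence constants depending only on an upper bound for $\norm{\beta}_{\HH^2_{\operatorname{BMO}}}$.

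First I bound the kernel. By Assumption~\ref{asm:BSDE-standing-asm}(1)--(2),
\[
\abs{\beta_t}\le \abs{B_t(X,0,\barcalL(X))}+K\abs{Z_t}\le L(1+M_1(\barcalL(X)))+K\abs{Z_t}.
\]
Since $\barcalL(X)$ is a deterministic measure with finite first moment (as $\barcalL(X,Z_\cdot)\in\calM_1$), the first term is a constant. Hence $\norm{\beta}_{\HH^2_{\operatorname{BMO}}(Q)}\le C(1+\norm{Z}_{\HH^2_{\operatorname{BMO}}(Q)})$ for $Q\in\{\PP,\barPP\}$.

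\emph{Direction (1)$\Rightarrow$(2).} Here $Z\in\HH^2_{\operatorname{BMO}}(\PP)$ gives $\beta\in\HH^2_{\operatorname{BMO}}(\PP)$. Therefore $\barPP\sim\PP$ is well defined, and $\bar W$ is a $\barPP$-Brownian motion by Girsanov. Substituting $dW=d\bar W+\beta\,dt$ into \eqref{general-MVBSDE} gives \eqref{general-MVFBSDE}, since $H_t-Z_t\cdot\beta_t=F_t$. The BMO invariance gives $Z\in\HH^2_{\operatorname{BMO}}(\barPP)$. For $Y$ nothing is needed, since $\SS^\infty$ depends only on the null sets and $\barPP\sim\PP$. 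For $X\in\SS^2(\barPP)$ I use H\"older together with $R_q(\PP)$: $\EE^{\barPP}[\sup\abs{X}^2]\le \EE[\calE_T^q]^{1/q}\EE[\sup\abs{X}^{2p}]^{1/p}<\infty$, because $X\in\bigcap_p\SS^p(\PP)$.

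\emph{Direction (2)$\Rightarrow$(1).} The converse is symmetric. Under $\barPP$, the process $W=\bar W+\int\beta\,dt$ and $\dif\PP/\dif\barPP=\calE(-\beta\cdot\bar W)_T$, where $\beta\in\HH^2_{\operatorname{BMO}}(\barPP)$ by the bound above with $Q=\barPP$. The same reasoning applies, except for $X\in\SS^2(\PP)$. For this I note that $X$ solves the driftless SDE \eqref{driftless-SDE} under $\PP$ once $W$ is shown to be a $\PP$-Brownian motion. Assumption~\ref{asm:driftless-SDE} and strong uniqueness then give $X\in\bigcap_p\SS^p(\PP)$ directly.

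\emph{Norm estimates \eqref{equiv-norm-Z}.} These follow from the quantitative form of Kazamaki's result. If $\norm{Z}_{\HH^2_{\operatorname{BMO}}(\PP)}\le L$, then $\norm{\beta}_{\HH^2_{\operatorname{BMO}}(\PP)}\le C(1+L)$. The equivalence constant $\gamma_\PP$ of Proposition~\ref{prop:BMO-norm-equivalence} depends only on this bound, and the case under $\barPP$ is symmetric.

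\emph{Main obstacle.} One subtlety is that the constant $L(1+M_1(\barcalL(X)))$ depends on the solution through $\barcalL(X)$. I would therefore allow $\gamma_\PP$ to depend also on $M_1(\barcalL(X))$, or bound this moment uniformly via the reverse H\"older inequality in terms of $L$ and $\EE[\sup\abs{X}^p]$. The latter route is circular but closable, since the $R_q$ constant depends only on the BMO bound of $\beta$, and that bound in turn depends on $M_1$. I would resolve this by first bounding $\norm{\beta}$ with $M_1(\barcalL(X))$ treated as a given finite quantity.
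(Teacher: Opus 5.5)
Your proposal is correct and follows essentially the paper's route: you bound the Girsanov kernel $B_\cdot(X,Z_\cdot,\barcalL(X))$ by a constant plus $K\abs{Z}$, then use Kazamaki's BMO norm equivalence for $Z$ and the reverse H\"older inequality for $X$, with constants depending on $M_1(\barcalL(X))$ just as the paper's unnamed constant $C$ does. Your small deviations are cleaner than the paper's write-up: you take $M_1(\barcalL(X))<\infty$ from the $\calM_1$ requirement (or from $X\in\SS^2(\barPP)$) before invoking reverse H\"older, whereas the paper invokes it before the kernel is known to be BMO, and in the converse direction you obtain $X\in\SS^2(\PP)$ from strong uniqueness of the driftless SDE.
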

\begin{proof}
Since $\PP \sim \barPP$ we only need to verify that the pair $(X,Z)$ belongs to the right spaces.

{\textsl{(1) $\Rightarrow$ (2)}} 
Since $X \in \SS^p(\PP)$ for any $p \in [2,\infty)$, the reverse H\"older inequality yields
\[
\bar{\EE} \left[\sup_{t \in [0,T]} \abs{X_t}^2 \right]
\leq \EE \left[ \abs{\dif \barPP \over \dif \PP}^q \right] \EE \left[ \sup_{t \in [0,T]} \abs{X_t}^{2p} \right]
< \infty
\]
for some H\"older conjugates $p, q > 1$. By Assumption~\ref{asm:BSDE-standing-asm}, 
\[
    \abs{B_t (X, Z_t, \barcalL(X))}
    \leq \abs{B_t(X, 0, \delta_0)} + K (\abs{Z_t} + \bar{\EE}\left[\abs{X_t^\ast} \right] )
    \leq C(1 + \abs{Z_t})
\]
for some constant $C > 0$ and so the process $B_\cdot (X, Z_\cdot, \barcalL(X))$ belongs to $\HH^2_{\operatorname{BMO}}(\PP)$.
Therefore, by Proposition~\ref{prop:BMO-norm-equivalence}, there exists a constant $\gamma_\PP$ that depends only on $\norm{Z}_{\HH^2_{\operatorname{BMO}}(\PP)}$ such that
\[
\norm{Z}_{\HH^2_{\operatorname{BMO}}(\barPP)} \leq \gamma_\PP \norm{Z}_{\HH^2_{\operatorname{BMO}}(\PP)}.
\]

\textsl{(2) $\Rightarrow$ (1)}
To prove the other direction, we use that 
\begin{align*}
\frac{\dif \PP}{\dif \barPP}
&= \calE\left( - B_\cdot (X, Z_\cdot, \barcalL(X)) \cdot \bar{W} \right)_T.
\end{align*}
From H\"older's inequality,  for some H\"older conjugates $p,q > 1$, 
\[
    \bar{\EE}\left[\sup_{t \in [0,T]} \abs{X_t}^r \right] 
    \leq \EE\left[ \abs{\dif \barPP \over \dif \PP}^q \right] \left[\sup_{t \in [0,T]} \abs{X_t}^{rp} \right]
    < \infty
    \quad \text{for all}~r \in [2, \infty).
\]
We can now use the same arguments as above to show that $X \in \SS^2(\PP)$ and $Z \in \HH^2_{\operatorname{BMO}}(\PP)$. 
\end{proof}


\subsection{Our approach}
\label{subsec:our-approach}

To prove the existence of mean-field equilibria it would be natural to consider, for any measure $q \in \calM_1$, the BSDEs \begin{equation} \label{BSDE-mu-q}
\dif Y_t^q = -H_t(X, Z_t^q, q_t) \dif t + Z_t^q \dif W_t, \quad Y_T^q = G(X,q^x_T),
\end{equation}
establish the existence and uniqueness of solutions $(Y^{q}, Z^{q}) \in \SS^2(\PP) \times \HH^2_{\operatorname{BMO}}(\PP)$, consider the probability measures $\PP^{q}$ defined in terms of the processes $Z^{q}$ by
\[
	\frac{\dif \PP^{q}}{\dif \PP} = \calE\left( B_\cdot (X, Z^{q}_\cdot, q^x_\cdot) \cdot  W \right)_T, 
\]
and to prove that the following solution mapping has a fixed-point:
\[
	q \mapsto {\phi}(q) := \calL^{q}(X, Z_\cdot^{q}),
\quad \text{where} \quad \calL^{q}(\cdot) := \PP^{q} \circ (\cdot)^{-1}.
\]

This would require $\phi$ to map a suitable compact, convex set into itself. Since we cannot expect the processes $Z^{q}$ to be continuous (c.f.\ \cite[Section 5]{imkeller2010path}), verifying compactness in $\calM_1$ would be difficult. To bypass this problem, we lift the solution map to the space 
\[
    \calP_1 (\calC_d)  \times \calY_1\big( [0,T] \times \calP_1(\calC_d \times \RR^d) \big) =: \calP_1 \times \calY_1
\]
where $\calY_1([0,T] \times \calP_1(\calC_d \times \RR^d))$ denotes the space of integrable Young measures on $[0,T] \times \calP_1 (\calC_d \times \RR^d)$, equipped with the stable topology.\footnote{Basic properties of (integrable) Young measures are reviewed in Appendix~\ref{appx:Young-measure}.} 

A subtle issue is that in our setting the mapping $z \cdot B_t(x,z,q^x)$ is not necessarily of linear growth in $q^x$ due to a cross term of $z$ and $q^x$, while $F_t(x,z,q)$ is always of linear growth in $q$. This difference turns out to be important when lifting the solution map. To be able to treat the laws in the drift and cost term differently, we introduce the modified Hamiltonian 
\[
    \bar{H}_t(x,z,\mu,q) := F_t(x,z,q) + z \cdot B_t(x,z,\mu)
\]
where $\mu \in \calP_1(\calC_d)$ and $q \in \calP_1(\calC_d \times \RR^d)$. 
Under Assumption~\ref{asm:BSDE-standing-asm}, 
we can then consider, for any law $\mu \in \calP_1$ and Young measure $\boldsym{\nu}(\dif t, \dif q) = \nu_t (\dif q) \dif t \in \calY_1$, 
the quadratic BSDE 
\begin{align} \label{BSDE-mu-nu}
    Y_t^{\mu, \boldsym{\nu}} &= G(X,\mu) + \int_t^T  \int_{\calP_1(\calC_d \times \RR^d)} \bar{H}_s(X, Z_s^{\mu,\boldnu}, \mu, q) \nu_s (\dif q)   \dif s - \int_t^T Z_s^{\mu, \boldsym{\nu}} \dif W_s.
\end{align}
The lifted BSDE is well defined because
\[
\int_0^T \int_{\calP_1(\calC_d \times \RR^d)} \abs{\bar{H}_s(X,0,\delta_0,q)} \nu_s(\dif q) \dif s
\le C \left(1 + \int_0^T \int_{\calP_1(\calC_d \times \RR^d)} M_1(q) \boldnu(\dif q, \dif t) \right) < \infty
\]
for some constant $C > 0$. If the density 
\begin{equation}\label{prob-mu-nu}
    \frac{\dif \PP^{\mu, \boldsym{\nu}}}{\dif \PP} = \calE(B_\cdot(X, Z^{\mu,\boldsym{\nu}}_\cdot, \mu) \cdot W)_T
\end{equation}
is well defined, then we can define the solution map $\Phi$ on $\calP_1 \times \calY_1$ by
\begin{equation}\label{def-sol-map}
\Phi(\mu, \boldsym{\nu}) :=  \big( \calL^{\mu, \boldsym{\nu}}(X), \delta_{\calL^{\mu,\boldsym{\nu}} (X, Z_t^{\mu, \boldsym{\nu}})}(\dif q)\dif t \big), \quad
\text{where} \quad \calL^{\mu,\boldsym{\nu}}(\cdot) := \PP^{\mu,\boldsym{\nu}} \circ (\cdot)^{-1}.
\end{equation}

If that mapping admits a fixed-point $(\mu^*,\boldsym{\nu}^*) \in \calP_1 \times \calY_1$ then our MV-BSDE admits a solution and hence our MFG admits an equilibrium in weak formulation. 

\begin{remark}
We emphasize that the use of {\sl integrable} Young measures is important to account for the integrability condition $\calL^{\mu^*,\boldsym{\nu}^*}(X, \hat \alpha) \in \calM_1$ of the equilibrium control $\hat \alpha$.  
\end{remark}

The following proposition shows that the mapping $\Phi$ is well defined in our setting; the proof follows essentially from \cite[Proposition 3]{briand2008quadratic}. It also provides a uniform BMO-bound if the coefficients are bounded in the mean-field terms, which will be key to our subsequent analysis. 

\begin{proposition}\label{prop:solution-map}
Under Assumption~\ref{asm:BSDE-standing-asm}, the BSDEs \eqref{BSDE-mu-nu} admit a unique solution 
\[
    (Y^{\mu,\boldsym{\nu}}, Z^{\mu,\boldsym{\nu}}) \in \SS^\infty(\PP) \times \HH_{\operatorname{BMO}}^2(\PP)
\]
for each $(\mu,\boldsym{\nu}) \in \calP_1 \times \calY_1$. The solutions satisfy
\[
\norm{Z^{\mu,\boldnu}}_{\HH^2_{\operatorname{BMO}}(\PP)} \le \phi\left( M_1(\mu), \int_0^T \int_{\calP_1(\calC_d \times \RR^d)} M_1(q) \boldnu(\dif q, \dif t) \right)
\]
for some positive function $\phi$.
If the boundedness condition \eqref{bdd-in-mf} holds, then the processes $Z^{\mu,\boldsym{\nu}}$ are uniformly bounded in BMO-norm: 
\begin{equation}\label{Z-unibdd-nu}
    C_{\operatorname{BMO}}:= \sup_{\mu \in \calP_1, \boldsym{\nu} \in \calY_1} \norm{Z^{\mu,\boldsym{\nu}}}_{\HH_{\operatorname{BMO}}^2(\PP)} < \infty.
\end{equation}
\end{proposition}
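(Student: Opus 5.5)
The plan is to reduce \eqref{BSDE-mu-nu} to a standard quadratic BSDE with bounded terminal condition and a driver of quadratic growth, and then to invoke the existence, uniqueness and BMO results of Briand--Hu type (cf.~\cite[Proposition 3]{briand2008quadratic}).

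Fix $(\mu,\boldnu)\in\calP_1\times\calY_1$ and define the lifted driver
\[
h_s(z):=\int_{\calP_1(\calC_d\times\RR^d)}\bar H_s(X,z,\mu,q)\,\nu_s(\dif q).
\]
By Assumption~\ref{asm:BSDE-standing-asm}\eqref{cond:bdd-in-x}, the terminal value satisfies $\abs{G(X,\mu)}\le L(1+M_1(\mu))$, so it is bounded. For the driver, Assumption~\ref{asm:BSDE-standing-asm}\eqref{cond:Lip-continuity} gives
\[
\abs{F_s(X,z,q)}\le \abs{F_s(X,0,q)}+K(1+\abs z)\abs z
\quad\text{and}\quad
\abs{z\cdot B_s(X,z,\mu)}\le \abs z\, L(1+M_1(\mu))+K\abs z^2 .
\]
Hence
\[
\abs{h_s(z)}\le \beta_s+C_\mu\abs z+C\abs z^2,
\qquad
\beta_s:=L\Big(1+\int M_1(q)\,\nu_s(\dif q)\Big),
\]
where $\beta\in L^1([0,T])$ is deterministic because $\boldnu$ is integrable. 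The same assumption shows that $h$ is locally Lipschitz in $z$ with
\[
\abs{h_s(z)-h_s(\bar z)}\le C_\mu(1+\abs z+\abs{\bar z})\abs{z-\bar z},
\]
with $C_\mu$ depending only on $K$, $L$ and $M_1(\mu)$. Measurability of $(s,\omega,z)\mapsto h_s(z)$ follows from the measurability of $(s,q)\mapsto \nu_s$ together with Fubini's theorem.

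Existence and uniqueness of $(Y,Z)\in\SS^\infty\times\HH^2_{\operatorname{BMO}}$ should then follow from the quadratic BSDE theory, namely Kobylanski's theorem or \cite{briand2008quadratic}. The only non-standard feature is that $\beta$ is integrable in time rather than bounded. I would absorb it by the change of variables
\[
\tilde Y_t:=Y_t+\int_0^t\beta_s\,\dif s,
\]
which only shifts $Y$ by a bounded deterministic function, or alternatively use the version in \cite{briand2008quadratic} that allows $\int_0^T\beta_s\,\dif s$ to be bounded. Uniqueness would then come from the locally Lipschitz bound above and a linearization argument. Writing $\delta h=\Delta_s\cdot\delta Z$, the coefficient $\Delta_s$ satisfies $\abs{\Delta_s}\le C(1+\abs{Z_s}+\abs{\bar Z_s})$, so $\Delta\in \HH^2_{\operatorname{BMO}}$. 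A Girsanov change of measure then shows $\delta Y=0$.

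The BMO bound uses the standard exponential transform. An a priori bound on $\norm{Y}_{\SS^\infty}$ comes first, for example by comparison with the ODE solutions for the drivers $\pm(\beta+C_\mu\abs z+C\abs z^2)$; this yields
\[
\norm{Y}_\infty\le c\Big(L(1+M_1(\mu))+\int_0^T\beta_s\,\dif s\Big).
\]
Next, apply Itô's formula to $u(Y_t)$ with $u(y)=(e^{2Cy}-1-2Cy)/(2C^2)$ between a stopping time $\tau$ and $T$, and take $\calF_\tau$-conditional expectations. The quadratic term is absorbed because $u''-2C\abs{u'}=1$, which gives
\[
\EE\Big[\int_\tau^T\abs{Z_s}^2\,\dif s\,\Big|\,\calF_\tau\Big]\le \phi\Big(M_1(\mu),\int_0^T\!\!\int M_1(q)\,\boldnu(\dif q,\dif t)\Big).
\]
The linear term $C_\mu\abs z$ is handled by Young's inequality. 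Under \eqref{bdd-in-mf}, all the constants $L(1+M_1(\cdot))$ above are replaced by $L$, and $C_\mu$ by $L+K$. The function $\phi$ is then independent of $(\mu,\boldnu)$, which gives \eqref{Z-unibdd-nu}.

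The main obstacle I expect is tracking exactly how the constants depend on $\mu$. The cross term $z\cdot B_s(X,z,\mu)$ produces a linear coefficient proportional to $1+M_1(\mu)$, and I must check that it enters only through the $\SS^\infty$ bound and Young's inequality, so that it produces a finite, explicit $\phi$. A further point to verify is that the density \eqref{prob-mu-nu} is well defined. This holds because
\[
\abs{B_s(X,Z_s,\mu)}\le L(1+M_1(\mu))+K\abs{Z_s}
\]
is BMO, so the stochastic exponential is a uniformly integrable martingale by Kazamaki's criterion. This last step is what makes $\Phi$ well defined.
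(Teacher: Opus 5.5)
Your proposal is correct and is essentially the paper's argument. The paper simply invokes \cite[Proposition 3]{briand2008quadratic}, and your steps unpack that reference: a bounded terminal value, quadratic growth with the deterministic time-integrable term $\beta$, comparison for the $\SS^\infty$ bound, and the exponential transform for the BMO bound, with constants depending on $(\mu,\boldnu)$ only through $M_1(\mu)$ and $\int_0^T\beta_s\,\dif s$ and hence uniform under \eqref{bdd-in-mf}. Two small fixes: the shift $\tilde Y_t=Y_t+\int_0^t\beta_s\,\dif s$ only makes the driver bound one-sided, so rely on the Briand--Hu version with $\int_0^T\beta_s\,\dif s$ bounded; and take $u(y)=(e^{2C\abs{y}}-1-2C\abs{y})/(4C^2)$ (or apply your $u$ to $Y+\norm{Y}_{\infty}$), since with $u$ as written $u''-2C\abs{u'}\ge 1$ fails for sufficiently negative $y$.
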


If our model parameters are bounded in mean-field terms, the uniform BMO bound on the processes $Z^{\mu,\boldsym{\nu}}$ allows us to identify a convex set that the solution map maps onto itself. In fact, for any measure $q=(q^x,q^z)$ in the range of $\Phi$, 
\[
	\int_0^T \int_{\RR^d} |w|^2 \dif q_t^z (w) \dif t \leq C_{\operatorname{BMO}}.
\]
Furthermore, by construction $q^x \ll  \mu_X := \PP \circ X^{-1} $, and by the reverse H\"older inequality there exists a constant $\delta_{\operatorname{RH}} > 0$ such that 
\[
	C_{\operatorname{RH}} := \sup_{\mu \in \calP(\calC_d), \boldsym{\nu} \in \calY} \EE \left[ \calE \left( B_\cdot (X, Z_\cdot^{\mu, \boldsym{\nu}}, \mu) \cdot W \right)_T^{1 + \delta_{\operatorname{RH}}}  \right] < \infty.
\]
In terms of these bounds, we now introduce the sets 
    \begin{equation}\label{domain}
    \begin{aligned}
    \calQ^* &:= \left\{\mu \in \calP_1 (\calC_d) ~\middle|~ \mu \ll \mu_X,~ \int_{\calC_d} \abs{\frac{\dif \mu}{\dif \mu_X}}^{1 + \delta_{\operatorname{RH}}} \dif \mu_X \leq C_{\operatorname{RH}} \right\},\\
    \calK^o
    &:= \left\{
      q \in \calM_1 ~\middle|~ 
      q_t^x \in \calQ^\ast ~ \mbox{for all } t \in [0,T], ~
      \int_0^T \int_{\RR^d} \abs{w}^2 \dif q_t^z (w) \dif t \leq C_{\operatorname{BMO}}
    \right\}.
    \end{aligned}
    \end{equation}

The set $\calQ^*$ is a convex subset of $\calP_1 (\calC_d) $ and $\calK^o$ is a convex subset of $\calM_1$. When embedded into the set $\calY_1$, the set $\calK^o$ may not be convex, but the following set is: 
\[
	\calK^* := \overline{\operatorname{conv}\{\delta_{q_t}(\dif q) \dif t \in \calY_1 \mid q \in \calK^o\}}^{\mathcal{S}_1(\calP_1(\calC_d \times \RR^d))}.
\]

\begin{corollary}\label{cor:maps-onto-itself}
	Under Assumption~\ref{asm:BSDE-standing-asm} and the boundedness condition \eqref{bdd-in-mf}, the solution mapping $\Phi$ can be viewed as a mapping
	\[
		\Phi : \calP_1 \times \calY_1 \to \calQ^\ast \times \calK^*. 
	\]
	In particular, it maps the closed convex set $\calQ^\ast \times \calK^*$ to itself. 
\end{corollary}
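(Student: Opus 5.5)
Fix $(\mu,\boldnu)\in\calP_1\times\calY_1$ and write $(Y,Z)=(Y^{\mu,\boldnu},Z^{\mu,\boldnu})$ for the solution of \eqref{BSDE-mu-nu} given by Proposition~\ref{prop:solution-map}, and $\PP^{\mu,\boldnu}$ for the measure in \eqref{prob-mu-nu}. The plan is to check separately that the first component of $\Phi(\mu,\boldnu)$ lies in $\calQ^\ast$ and that the second lies in $\calK^\ast$. Once this is done, the self-map statement is immediate, because $\calQ^\ast\times\calK^\ast\subset\calP_1\times\calY_1$.

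\textbf{First component.} By Assumption~\ref{asm:BSDE-standing-asm}, $|B_t(X,Z_t,\mu)|\le L+K|Z_t|$. With \eqref{Z-unibdd-nu} this gives a bound on $\norm{B_\cdot(X,Z_\cdot,\mu)}_{\HH^2_{\operatorname{BMO}}(\PP)}$ that does not depend on $(\mu,\boldnu)$. Proposition~\ref{prop:BMO-chara} (reverse H\"older) then yields a uniform exponent $\delta_{\operatorname{RH}}$ with $\EE[(\dif\PP^{\mu,\boldnu}/\dif\PP)^{1+\delta_{\operatorname{RH}}}]\le C_{\operatorname{RH}}$; this is exactly how $C_{\operatorname{RH}}$ was defined. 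Set $\rho:=\dif\calL^{\mu,\boldnu}(X)/\dif\mu_X$. Since $\rho(X)=\EE[\dif\PP^{\mu,\boldnu}/\dif\PP\mid\sigma(X)]$, Jensen's inequality for conditional expectations gives $\int\rho^{1+\delta_{\operatorname{RH}}}\dif\mu_X\le C_{\operatorname{RH}}$. Finiteness of the first moment follows from H\"older's inequality together with $X\in\SS^p(\PP)$ for every $p<\infty$. Hence $\calL^{\mu,\boldnu}(X)\in\calQ^\ast$.

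\textbf{Second component.} Define the flow $q_t:=\calL^{\mu,\boldnu}(X,Z_t)$. The law $q^x_t$ does not depend on $t$ and equals the first component, so $q^x_t\in\calQ^\ast$ for every $t$. For the $Z$-marginal, $\int_0^T\int|w|^2\dif q^z_t(w)\dif t=\EE^{\PP^{\mu,\boldnu}}[\int_0^T|Z_t|^2\dif t]$. Proposition~\ref{prop:BMO-norm-equivalence} transfers the BMO norm from $\PP$ to $\PP^{\mu,\boldnu}$, at the cost of a multiplicative constant that depends only on the uniform BMO bounds above. This gives a uniform bound on the integral.

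Here is the main obstacle. The definition of $\calK^o$ uses the constant $C_{\operatorname{BMO}}$ itself, whereas the transfer produces $\gamma\,C_{\operatorname{BMO}}^2$. I would resolve this by reading the constant in \eqref{domain} as this transferred uniform bound, i.e.\ by enlarging the constant defining $\calK^o$ to the constant the argument actually produces; nothing else in the construction depends on its precise value. Measurability of $t\mapsto q_t$ follows from progressive measurability of $Z$, and $\int_0^T M_1(q_t)\dif t<\infty$ follows from the moment bounds already obtained. So $q\in\calK^o$, and therefore $\delta_{q_t}(\dif q)\dif t\in\calK^\ast$.

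\textbf{Closedness and convexity.} $\calQ^\ast$ is convex because the moment constraint is convex in the density; the $\mu_X$-absolute continuity is preserved under mixtures. It is closed in $\calW_1$ because the $L^{1+\delta}$-ball of densities is weakly compact in $L^{1+\delta}(\mu_X)$, and the density constraint is lower semicontinuous under weak convergence. The set $\calK^\ast$ is closed and convex by construction. This completes the proof.
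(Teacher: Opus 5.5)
Your proposal is correct and follows essentially the paper's own justification (the paragraph preceding the corollary): the uniform BMO bound from Proposition~\ref{prop:solution-map}, a uniform BMO bound on $B_\cdot(X,Z_\cdot,\mu)$ feeding the reverse H\"older inequality, a conditional Jensen step for the $X$-marginal density, and a uniform second-moment bound on $Z$ under $\PP^{\mu,\boldnu}$. Your remark about the constant is well taken. The paper bounds $\EE^{\PP^{\mu,\boldnu}}[\int_0^T\abs{Z_t}^2\dif t]$ by $C_{\operatorname{BMO}}$ without the change-of-measure transfer via Proposition~\ref{prop:BMO-norm-equivalence}, so replacing $C_{\operatorname{BMO}}$ in \eqref{domain} by the transferred uniform bound is the right repair, and it is harmless for the later compactness argument.
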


In Section 3 we prove that the solution mapping is continuous and in Section 4 that $\calQ^* \times \calK^*$ is compact. As a result, the solution mapping has a fixed point $(\mu^*,\boldsym{\nu}^*) \in \calP_1 \times \calY_1$ and hence a MFG equilibrium in weak formulation exists when the model parameters are bounded in mean-field terms. The case of unbounded coefficients will be solved by approximation.


\subsection{Examples}
\label{subsec:examples}

We close this section with a discussion of two toy models that are covered by our framework but not by existing ones. For simplicity, we consider only the case of one-dimensional state dynamics and control processes. The below examples easily extend to multi-dimensional settings and  time-delayed models where $x_t$ is replaced by $x_{t-\delta}$ for some $\delta > 0$.

\begin{example}[State process governed by Geometric Brownian motion]\label{eg:bounded-in-qx}
Let
\begin{gather*}
x_0 > 0, \quad
\sigma_t (x) := x_t, \quad
b_t (x,m^x,a) := x_t \left\{ a + \int_{\calC_1} \bar{x}_t m^x (\dif \bar{x}) \right\}, \quad g(x,m^x) := \int_{\calC_1} \bar{x}_T m^x (\dif \bar{x})
\end{gather*}
and
\[
f_t (x,m,a) := \left\{ -\frac12 a + \phi \left( \int_{\calC_1} \bar{x}_t  m^x (\dif \bar{x}) \right) \right\} \cdot a + \bar{f} (x_t) \int_\RR \bar{a} m^a (\dif \bar{a}),
\]
where $\phi:\RR \to \RR$ is bounded and continuous, and $\bar{f}:\RR \to \RR$ is measurable and bounded. In this case $X$ is geometric Brownian motion, and hence $\sigma_t (X)$ is a.s.~invertible.
Moreover, the maximizer $\Lambda$ of the reduced Hamiltonian \eqref{Hamiltonian} is given by
\begin{equation} \label{Lambda-eg}
\Lambda_t (X, m, z)
=  z + \phi \left(\int_{\calC_1} \bar{x}_t m^x (\dif \bar{x}) \right).
\end{equation}
Since the growth condition \eqref{item:BMO-sigma-b-Lambda} of Assumption~\ref{asm:MFG-growth-conditions} is assumed on $\sigma^{-1}b$, not separately on $\sigma^{-1}$ and $b$, our framework  accommodates geometric Brownian motion as state process. This point has already been emphasized in  \cite[Example 5.7 ]{Carmona2015}. However, their result does not cover this case as they require  $\sigma^{-1}b$ to be bounded, in which case their maximizer $\Lambda_t$ is also bounded.
If $\Lambda$ is bounded, then the Hamiltonian is Lipschitz continuous in $z$, a case that has already been addressed in, e.g.~ \cite{possamai2025non}.
Furthermore, our drift function $b$ depends on the product of the state and control variables. Control problems involving such product term are often challenging. To the best of our knowledge, there are very few universal results for these problems.
\end{example}

\begin{example}[Cost function unbounded in the law of state and control]\label{eg:unbounded-in-qx}
Let
\begin{gather*}
x_0 > 0, \quad
\sigma_t (x) := 1, \quad
b_t (x,m^x,a) := a + \int_{\calC_1} \bar{x}_t  m^x (\dif \bar{x}), \quad
g(x,m^x) := \int_{\calC_1} \bar{x}_T m^x (\dif \bar{x}),
\end{gather*}
and
\[
f_t (x,m,a) := \left\{ - \frac12 a + \phi \left( \int_{\calC_1} \bar{x}_t  m^x (\dif \bar{x}) \right) \right\} \cdot a + \bar{f} (x_t) \int_{\calC_1 \times \RR} (\bar{x}_t + \bar{a}) m (\dif \bar{x}, \dif \bar{a}),
\]
where 
$\phi$ and $\bar{f}$ are as in Example~\ref{eg:bounded-in-qx}.
We see that the maximizer $\Lambda$ of the reduced Hamiltonian is given by \eqref{Lambda-eg}. Unlike in Example~\ref{eg:bounded-in-qx}  the running cost function is unbounded in the law of the state. This comes at the expense of dropping the multiplicative term $x \cdot a$ from the drift function. The drift term in Example~\ref{eg:bounded-in-qx} does not meet Condition \eqref{cond:unbdd-in-mx} in Assumption~\ref{asm:strong-quad}. 
\end{example}


\section{The solution mapping}
\label{sec:stability}

In this section, we prove the continuity of the solution mapping $\Phi: \calP_1 \times \calY_1 \to \calP_1 \times \calY_1$. Our continuity result is based on a novel stability result for quadratic MV-BSDEs. 

\subsection{Stability of quadratic MV-BSDEs}

The proof of the stability result requires a series of auxiliary results that we now present. In what follows $(E, \norm{\cdot}_E)$ is a Banach space, and for an indexed measure $\PP^i \in \calP(\Omega)$ we set $\EE^i [\cdot] := \EE^{\PP^i}[\cdot]$ and $\calL^i (\cdot) := \PP^i \circ (\cdot)^{-1}$.

\begin{proposition}\label{prop:stability-Young-meas}
    Let $\PP^n \in \calP(\Omega)$ and $X^n \in \HH^2(E; \PP^n) \cap \HH^2(E;\PP^\infty)$ for any $n \in \NN^\ast$, satisfy
    \begin{equation}\label{uniform-H2-bound}
    \textstyle
        \sup_{n \in \NN} \big\{\norm{X^n}_{\HH^2(E;\PP^n)} + \norm{X^n}_{\HH^2(E;\PP^\infty)} \big\} < \infty.
    \end{equation}
    If
    \begin{equation}\label{TV-convergence-PPn}
    \norm{\PP^n - \PP^\infty}_{\operatorname{TV}} \to 0
    \end{equation}
    and
    \begin{equation}\label{L2-convergence-PPinfty}
        \EE^\infty \left[ \int_0^T \norm{X_s^n - X_s^\infty}_E^2 \dif s \right] \to 0,
    \end{equation}
    then 
    \begin{equation}\label{general-stable-convergence}
        \delta_{\calL^n (X_t^n)}(\dif q)\dif t \to \delta_{\calL^\infty (X_t^\infty)}(\dif q) \dif t \quad \text{in} \quad \calS_1(\calP_1(E)).
    \end{equation}
\end{proposition}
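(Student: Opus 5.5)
We need to show stable convergence in $\calS_1(\calP_1(E))$ of the Young measures $\delta_{\calL^n(X^n_t)}(\dif q)\dif t$. The natural route is through a metric bound. Convergence in the integrable stable topology is implied by convergence of
\[
\int_0^T \calW_1\big(\calL^n(X^n_t),\calL^\infty(X^\infty_t)\big)\dif t \to 0 .
\]
Indeed, for a Carathéodory integrand $\varphi(t,q)$ that is continuous in $q$ and dominated by $C(1+M_1(q))$, one passes from convergence in measure $\dif t$ of $q^n_t\to q^\infty_t$ in $\calP_1(E)$ to convergence of $\int\varphi(t,q^n_t)\dif t$. That step needs two ingredients: dominated/Vitali convergence along subsequences, and uniform integrability of $t\mapsto M_1(q^n_t)$. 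I will take this characterization from Appendix~\ref{appx:Young-measure}, namely that $L^1$-convergence in measure of the disintegrations implies stable convergence. If only a weaker statement is available there, I will run the subsequence argument directly. Uniform integrability of $M_1(\calL^n(X^n_t)) = \EE^n\norm{X^n_t}_E$ in $t$ follows from the uniform $\HH^2(\PP^n)$ bound in \eqref{uniform-H2-bound}. By Cauchy--Schwarz, $\int_0^T (\EE^n\norm{X^n_t})^2\dif t \le \norm{X^n}^2_{\HH^2(E;\PP^n)}$ is bounded, which gives a uniform $L^2(\dif t)$ bound and hence uniform integrability.

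The main estimate splits the distance by the triangle inequality:
\[
\calW_1\big(\calL^n(X^n_t),\calL^\infty(X^\infty_t)\big)\le \calW_1\big(\calL^n(X^n_t),\calL^\infty(X^n_t)\big)+\EE^\infty\norm{X^n_t-X^\infty_t}_E .
\]
For the second term, the coupling under $\PP^\infty$ bounds it, and integrating in $t$ with Cauchy--Schwarz sends it to zero by \eqref{L2-convergence-PPinfty}.

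The first term compares the laws of the same random variable under two measures, and this is the main obstacle. Total variation does not control $\calW_1$ without integrability, so I will use a truncation. Take the coupling that keeps the common mass $\PP^n\wedge\PP^\infty$ on the diagonal and routes the rest through $0$. This gives
\[
\calW_1\big(\calL^n(Y),\calL^\infty(Y)\big)\le \int \norm{Y}_E\, \dif\abs{\PP^n-\PP^\infty}
\le \EE^n\big[\norm{Y}_E\mathbf 1_{D}\big]+\EE^\infty\big[\norm{Y}_E\mathbf 1_{D}\big]+\dots
\]
More cleanly, writing $\abs{\PP^n-\PP^\infty}\le \PP^n+\PP^\infty$ and splitting on $\{\norm{Y}_E\le R\}$, we get
\[
\calW_1 \le R\norm{\PP^n-\PP^\infty}_{\operatorname{TV}} + \EE^n[\norm{Y}_E\mathbf 1_{\norm{Y}>R}]+\EE^\infty[\norm{Y}_E\mathbf 1_{\norm{Y}>R}] .
\]
Apply this with $Y=X^n_t$, integrate over $t$, and bound the tails by $R^{-1}\big(\EE^n+\EE^\infty\big)\int_0^T\norm{X^n_t}^2\dif t$. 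That term is at most $C/R$ uniformly in $n$, by the two-sided bound \eqref{uniform-H2-bound}; this is exactly why that bound is required under both $\PP^n$ and $\PP^\infty$. Hence $\limsup_n\int_0^T\calW_1(\cdots)\dif t\le C/R$ by \eqref{TV-convergence-PPn}, and letting $R\to\infty$ finishes the estimate.

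Finally, the $\dif t$-integrated $\calW_1$ convergence yields convergence in $\dif t$-measure of $q^n_t\to q^\infty_t$ in $\calP_1(E)$. Combined with the uniform integrability of $M_1(q^n_t)$, this gives \eqref{general-stable-convergence} in $\calS_1(\calP_1(E))$. Two side checks remain: measurability of $t\mapsto\calL^n(X^n_t)$ (from progressive measurability), and that the limit Young measure is integrable (from $X^\infty\in\HH^2(E;\PP^\infty)$).
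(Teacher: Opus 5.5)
Your proposal is correct and follows essentially the paper's argument. It uses the same triangle inequality through $\calL^\infty(X^n_t)$, the coupling under $\PP^\infty$ for the second term, and a level-$R$ truncation for the first term, controlled by $R\norm{\PP^n-\PP^\infty}_{\operatorname{TV}}$ plus second-moment tails under both $\PP^n$ and $\PP^\infty$. The differences are cosmetic: the paper chooses $R=\norm{\PP^n-\PP^\infty}_{\operatorname{TV}}^{-1/2}$ to get an explicit rate, and it passes from the integrated $\calW_1$ convergence to $\calS_1$ convergence directly via Proposition~\ref{prop:Portmanteau-thm-intYoung}. Your detour through convergence in measure and uniform integrability is valid but not needed, since for Lipschitz test functions the integrated $\calW_1$ bound already controls the defining functionals.
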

\begin{proof}
In view of Proposition~\ref{prop:Portmanteau-thm-intYoung}, it suffices to prove that
\begin{equation*}\label{convergence-integrated-W1}
    \int_0^T \calW_1(\calL^n(X_t^n), \calL^\infty(X_t^n)) \dif t \to 0, \quad \mbox{and} \quad
    \int_0^T \calW_1(\calL^\infty(X_t^n), \calL^\infty(X_t^\infty)) \dif t \to 0.
\end{equation*}
The second convergence follows directly from \eqref{L2-convergence-PPinfty}. To establish the first, we set 
\[
    Y_t^{n,R} := X_t^n \boldsym{1}_{\{\norm{X_t^n}_E \le R\}} \quad \mbox{for} \quad R>0, ~ n \in \NN^\ast.
\]
Then, 
\begin{align*}
    \calW_1&(\calL^n(X_t^n), \calL^\infty(X_t^n)) \\
    &\le \underbracket{\calW_1 \big( \calL^n(Y_t^{n,R}), \calL^\infty(Y_t^{n, R}) \big)}_{=: D_t^1(n,R)}
    + \underbracket{\calW_1 \big( \calL^n(X_t^n), \calL^n(Y_t^{n,R}) \big) + \calW_1 \big( \calL^\infty(X_t^n), \calL^\infty(Y_t^{n,R}) \big)}_{=:D_t^2(n,R)}.
\end{align*}
The term $D_t^1(n,R)$ satisfies $D_t^1(n,R) \le 2R \norm{\PP^n - \PP^\infty}_{\operatorname{TV}}$, 
and the term $D_t^2(n,R)$ satisfies 
\begin{equation}\label{controlled-by-second-moment}
\begin{aligned}
    D_t^2(n,R)
    &\le \EE^n\left[\norm{X_t^n}_E \boldsym{1}_{\{\norm{X_t^n}_E > R\}}\right] + \EE^\infty \left[\norm{X_t^n}_E \boldsym{1}_{\{\norm{X_t^n}_E > R\}} \right] \\
    &\le \frac{1}{R} \Big( \EE^n \left[\norm{X_t^n}_E^2] + \EE^\infty[\norm{X_t^n}_E^2 \right] \Big).
\end{aligned}
\end{equation}
Combining the two estimates, we obtain the following:
\[
\int_0^T \calW_1 (\calL^n(X_t^n), \calL^\infty(X_t^n)) \dif t
\le 2RT \norm{\PP^n - \PP^\infty}_{\operatorname{TV}} + \frac{C}R,
\]
for any $R > 0$ and $n \in \NN$, where $C := \sup_{n \in \NN} \big( \norm{X^n}_{\HH^2(E;\PP^n)}^2 + \norm{X^n}_{\HH^2(E;\PP^\infty)}^2 \big) < \infty$ by assumption \eqref{uniform-H2-bound}. Taking $R = \norm{\PP^n - \PP^\infty}_{\operatorname{TV}}^{-1/2}$ yields \[
\int_0^T \calW_1 (\calL^n(X_t^n), \calL^\infty(X_t^n)) \dif t
\le (2T + C) \sqrt{\norm{\PP^n - \PP^\infty}_{\operatorname{TV}}}.
\]
\end{proof}

When we later apply Proposition~\ref{prop:stability-Young-meas} to our stability result (Theorem~\ref{thm:general-stability}), the probability measures $\PP^n$ in Proposition~\ref{prop:stability-Young-meas} will have densities $\frac{\dif \PP^n}{\dif \PP}$ w.r.t.~ $\PP$ that are determined by stochastic exponentials.
The next proposition shows that, under this condition, the $L^2$-convergence \eqref{L2-convergence-PPinfty} implies the convergence \eqref{TV-convergence-PPn} in total variation norm. A related result has previously been obtained by 
Carmona and Lacker \cite{carmona_lacker_2015} using the Pinsker inequality. We prepare the proposition with the following basic lemma that will be used repeatedly throughout the rest of the paper.

\begin{lemma}\label{lem:uni-bdd-norm}
    Let $X:\Omega \to E$ be a random variable and $I$ be an index set. For any $i \in I$ let $Z^i \in \HH_{\operatorname{BMO}}^2(\RR^d;\PP)$, let $\theta^i : [0,T] \times \Omega \times \RR^d \to \RR^d$ be a measurable function that satisfies
    \begin{equation}\label{Lip-conti-theta}
        \abs{\theta_t^i(z) - \theta_t^i(\bar{z})} \leq K_{\theta} |z-\bar{z}|, \quad K_\theta < \infty,
    \end{equation}
    and let $\PP^i \in \calP(\Omega)$ be a probability measure with density $\frac{\dif \PP^i}{\dif \PP} = \calE\left(\theta^i(Z^i) \cdot  W \right)_T.$
    If 
    \begin{equation}\label{uni-bdd-Z-theta}
        X \in \bigcap_{p < \infty} \LL^p(E;\PP) \quad \mbox{and} \quad
         \sup_{i \in I} \norm{Z^i}_{\HH_{\operatorname{BMO}}^2(\PP)} + \sup_{i \in I} \norm{\theta^i(0)}_{\HH_{\operatorname{BMO}}^2(\PP)} < \infty,
    \end{equation}
    then 
    \begin{equation}\label{uni-bdd-density}
        \sup_{i \in I} \int_E \abs{\frac{\dif (\PP^i \circ X^{-1})}{\dif (\PP \circ X^{-1})}}^{1+\delta} \dif (\PP \circ X^{-1}) < \infty
    \end{equation}
    for some $\delta > 0$.
    Furthermore, 
    \begin{equation}\label{uni-bdd-density2}
        \sup_{i \in I} \norm{X}_{\LL^2(E; \PP^i)} < \infty
    \end{equation}
    and for any $i_0 \in I$,
    \begin{equation}\label{uni-bdd-norm}
        \sup_{i \in I} \Big\{ \norm{Z^i}_{\HH^2_{\operatorname{BMO}}(\PP^i)} + \norm{Z^i}_{\HH^2_{\operatorname{BMO}}(\PP^{i_0})} \Big\} < \infty.
    \end{equation}
\end{lemma}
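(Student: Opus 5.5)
The plan rests on one observation: the processes $\theta^i(Z^i)$ are uniformly bounded in $\HH^2_{\operatorname{BMO}}(\PP)$. Everything else then follows from the uniform reverse H\"older inequality and the BMO norm-equivalence (Propositions~\ref{prop:BMO-chara} and \ref{prop:BMO-norm-equivalence}).

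\emph{Step 1: a uniform BMO bound on the integrands.} By the Lipschitz condition \eqref{Lip-conti-theta},
\[
\abs{\theta^i_t(Z^i_t)} \le \abs{\theta^i_t(0)} + K_\theta \abs{Z^i_t}.
\]
Hence
\[
\norm{\theta^i(Z^i)}_{\HH^2_{\operatorname{BMO}}(\PP)} \le \sqrt2\big(\norm{\theta^i(0)}_{\HH^2_{\operatorname{BMO}}(\PP)} + K_\theta \norm{Z^i}_{\HH^2_{\operatorname{BMO}}(\PP)}\big).
\]
By \eqref{uni-bdd-Z-theta} this is bounded uniformly in $i$ by some $C_0$.

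\emph{Step 2: uniform reverse H\"older.} The reverse H\"older inequality for stochastic exponentials of BMO martingales gives an exponent $r>1$ and a constant $C_r$, both depending only on the BMO norm, such that
\[
\EE\big[\calE(\theta^i(Z^i)\cdot W)_T^{1+\delta}\big] \le C_r
\]
for all $i$, with $\delta := r-1$. This uniformity in the bound is the key point, and I would state it explicitly from Proposition~\ref{prop:BMO-chara}. To get \eqref{uni-bdd-density}, observe that the density of $\PP^i\circ X^{-1}$ with respect to $\PP\circ X^{-1}$ is the conditional expectation $\EE[\dif\PP^i/\dif\PP \mid \sigma(X)]$, written as a function of $X$. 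Conditional Jensen then bounds its $(1+\delta)$-moment by $C_r$.

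\emph{Step 3: the $\LL^2$ bound.} For \eqref{uni-bdd-density2}, apply H\"older's inequality with the conjugate exponent $p=(1+\delta)/\delta$:
\[
\EE^i\norm{X}_E^2 \le C_r^{1/(1+\delta)}\,\EE\big[\norm{X}_E^{2p}\big]^{1/p}.
\]
The right-hand side is finite and uniform in $i$ because $X\in\bigcap_p\LL^p(E;\PP)$.

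\emph{Step 4: BMO norms under the changed measures.} For \eqref{uni-bdd-norm}, the first term comes from Proposition~\ref{prop:BMO-norm-equivalence}. Under $\PP^i$, Girsanov's theorem makes $W-\int\theta^i(Z^i)\,\dif s$ a Brownian motion, and the BMO norms under $\PP$ and $\PP^i$ are equivalent with constants depending only on $\norm{\theta^i(Z^i)}_{\HH^2_{\operatorname{BMO}}(\PP)}\le C_0$. Hence $\sup_i\norm{Z^i}_{\HH^2_{\operatorname{BMO}}(\PP^i)}<\infty$. The same step applied to the fixed index $i_0$ shows that $\norm{\cdot}_{\HH^2_{\operatorname{BMO}}(\PP^{i_0})}\le c\,\norm{\cdot}_{\HH^2_{\operatorname{BMO}}(\PP)}$ for a single constant $c$. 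It follows that $\sup_i\norm{Z^i}_{\HH^2_{\operatorname{BMO}}(\PP^{i_0})}\le c\,\sup_i\norm{Z^i}_{\HH^2_{\operatorname{BMO}}(\PP)}<\infty$.

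\emph{Main obstacle.} The only delicate point is that the constants in the reverse H\"older inequality and in the norm equivalence depend \emph{only} on an upper bound for the BMO norm of the integrand, not on the particular process. This is the Kazamaki-type statement, and I would cite it in that form. If Proposition~\ref{prop:BMO-norm-equivalence} is stated only qualitatively, I would reprove the quantitative dependence via Kazamaki's criterion $\sup\EE[\exp(\epsilon\langle M\rangle_T-\epsilon\langle M\rangle_\tau)\mid\calF_\tau]$ controlled by the John--Nirenberg inequality.
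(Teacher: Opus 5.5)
Your proposal is correct and follows the paper's proof essentially step for step. You get a uniform BMO bound on $\theta^i(Z^i)$ from the Lipschitz condition. Conditional Jensen plus the uniform reverse H\"older inequality then give \eqref{uni-bdd-density}, and H\"older's inequality gives \eqref{uni-bdd-density2}. Finally, Proposition~\ref{prop:BMO-norm-equivalence}, applied once with $\theta^i(Z^i)$ and once with the fixed $\theta^{i_0}(Z^{i_0})$, gives \eqref{uni-bdd-norm}; your H\"older step with exponent $(1+\delta)/\delta$ is also written more cleanly than the corresponding estimate in the paper.
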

\begin{proof}
By \cite[Lemma 7.6]{Carmona2015}\label{lem:density-equal-condEXP} and the Jensen inequality, it holds for any $\delta > 0$ that
\begin{align*}
    \int_{E} \abs{\frac{\dif (\PP^i \circ X^{-1})}{\dif (\PP \circ X^{-1})}}^{1+\delta} \dif (\PP \circ X^{-1})
    &= \EE \left[ \abs{\frac{\dif (\PP^i \circ X^{-1})}{\dif (\PP \circ X^{-1})} \circ X }^{1+\delta} \right] 
     = \EE\left[ \EE \left[\frac{\dif \PP^i}{\dif \PP} \,\middle|\, X\right]^{1+\delta} \right] \\
    &\leq \EE \left[ \calE \big( \theta^i(Z^i) \cdot W \big)_T^{1+\delta} \right].
\end{align*}
Assumptions \eqref{Lip-conti-theta} and \eqref{uni-bdd-Z-theta} imply $\sup_{i \in I} \norm{\theta^i(Z^i)}_{\HH_{\operatorname{BMO}}^2(\PP)} < \infty$ and so \eqref{uni-bdd-density} follows from the reverse H\"older inequality. The inequality \eqref{uni-bdd-norm} follows from the uniform BMO-bound on $\theta^i(Z^i)$ along with Proposition~\ref{prop:BMO-norm-equivalence} which yields $C>0$ that does not depend on $i \in I$ such that 
\begin{align*}
     \norm{Z^i}_{\HH_{\operatorname{BMO}}^2(\PP^i)} + \norm{Z^i}_{\HH_{\operatorname{BMO}}^2(\PP^{i_0})}
     \leq C \norm{Z^i}_{\HH_{\operatorname{BMO}}^2(\PP)}.
\end{align*}
Finally, \eqref{uni-bdd-density2} follows from the fact that for some $q > 1$,
\begin{align*}
    \EE^i [ \norm{X}_E^2]
    &\le \int_E \norm{x}_E \abs{\frac{\dif (\PP^i \circ X^{-1})}{\dif (\PP \circ X^{-1})}} (\PP \circ X^{-1})(\dif x) \\
    & \le \EE[{\norm{X}_E^q}]^{\frac1q} \int_{E} \abs{\frac{\dif (\PP^i \circ X^{-1})}{\dif (\PP \circ X^{-1})}}^{1+\delta} \dif (\PP \circ X^{-1}).
\end{align*}
\end{proof}

\begin{proposition}\label{prop:TV-convergence}
Under the assumptions of Lemma~\ref{lem:uni-bdd-norm} with $I = \NN^\ast$ if 
\begin{equation}\label{convergence-asm-theta}
\EE \left[ \left( \int_0^T \abs{(\theta^n_s - \theta^\infty_s)(Z_s^\infty)}^2 \dif s \right)^p \right] \to 0 \quad \text{for any}~p \in [1,\infty),
\end{equation}
then \eqref{L2-convergence-PPinfty} implies \eqref{TV-convergence-PPn} with $X^n = (X, Z^n)$. In particular, the convergence \eqref{L2-convergence-PPinfty} implies \eqref{general-stable-convergence}.
\end{proposition}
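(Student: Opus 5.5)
We need to show that $\norm{\PP^n-\PP^\infty}_{\operatorname{TV}}\to 0$ given $\EE^\infty[\int_0^T \abs{Z^n_s-Z^\infty_s}^2\dif s]\to 0$; here \eqref{L2-convergence-PPinfty} with $X^n=(X,Z^n)$ reduces to this because the $X$-component is the same for every $n$. Once that is established, the final claim follows from Proposition~\ref{prop:stability-Young-meas}. Its hypothesis \eqref{uniform-H2-bound} holds because Lemma~\ref{lem:uni-bdd-norm} gives \eqref{uni-bdd-density2} and \eqref{uni-bdd-norm}. Indeed, BMO norms dominate $\HH^2$ norms, and $\int_0^T\norm{X}_\infty^2\dif s$ is uniformly bounded in $\LL^1(\PP^n)$ and $\LL^1(\PP^\infty)$ by \eqref{uni-bdd-density2}.

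\textbf{Step 1: reduce to convergence in $\PP$-probability.} Set $M^n:=\theta^n(Z^n)\cdot W$. The densities $D^n:=\calE(M^n)_T$ have uniformly bounded $\LL^{1+\delta}(\PP)$-norms by the reverse H\"older inequality, since $\sup_n\norm{\theta^n(Z^n)}_{\HH^2_{\operatorname{BMO}}(\PP)}<\infty$ as in the proof of Lemma~\ref{lem:uni-bdd-norm}. Hence $\{D^n\}$ is uniformly $\PP$-integrable. By Vitali's theorem it is then enough to show $D^n\to D^\infty$ in $\PP$-probability, because $\norm{\PP^n-\PP^\infty}_{\operatorname{TV}}=\EE\abs{D^n-D^\infty}$ (up to a factor of 2). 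Convergence in probability in turn follows once
\[
\int_0^T\abs{\theta^n_s(Z^n_s)-\theta^\infty_s(Z^\infty_s)}^2\dif s\to 0\quad\text{in $\PP$-probability,}
\]
since then $\langle M^n-M^\infty\rangle_T\to0$, which gives $\sup_t\abs{M^n_t-M^\infty_t}\to0$ in probability (Lenglart or BDG), and the exponential formula is continuous in $(M_T,\langle M\rangle_T)$.

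\textbf{Step 2: split the integrand.} Write
\[
\abs{\theta^n(Z^n)-\theta^\infty(Z^\infty)}\le K_\theta\abs{Z^n-Z^\infty}+\abs{(\theta^n-\theta^\infty)(Z^\infty)}.
\]
The second term tends to $0$ in $\LL^p(\PP)$ by \eqref{convergence-asm-theta}, hence in $\PP$-probability. For the first term, \eqref{L2-convergence-PPinfty} gives $\int_0^T\abs{Z^n_s-Z^\infty_s}^2\dif s\to0$ in $\LL^1(\PP^\infty)$, hence in $\PP^\infty$-probability. Since $D^\infty>0$ $\PP$-a.s., the measures $\PP^\infty$ and $\PP$ are equivalent, and convergence in probability transfers between equivalent measures. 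That concludes the argument.

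\textbf{Main obstacle.} The delicate point is the passage from $\PP^\infty$-convergence to convergence under $\PP$. If a quantitative bound is preferred to a soft equivalence-of-measures argument, I would use H\"older with $\dif\PP/\dif\PP^\infty=\calE(-\theta^\infty(Z^\infty)\cdot W^\infty)_T$. Here $W^\infty$ is the $\PP^\infty$-Brownian motion, and this density satisfies a reverse H\"older inequality under $\PP^\infty$ because $Z^\infty$ is BMO under $\PP^\infty$ by \eqref{uni-bdd-norm}. Combined with the uniform BMO bounds on $Z^n$, which give uniform $\LL^p$ bounds on $\int\abs{Z^n-Z^\infty}^2$, interpolation then yields $\LL^1(\PP)$ convergence directly.
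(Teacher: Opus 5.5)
Your proof is correct, but it reaches total-variation convergence by a different route from the paper's.

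\textbf{The paper's route.} The paper bounds the relative entropy and applies Pinsker's inequality. Under $\PP^\infty$ the process $W-\int_0^\cdot\theta^\infty_s(Z^\infty_s)\,\dif s$ is a Brownian motion. This gives
\[
H(\PP^\infty\mid\PP^n)=\tfrac12\EE^\infty\Big[\int_0^T\abs{\theta^n_s(Z^n_s)-\theta^\infty_s(Z^\infty_s)}^2\dif s\Big],
\]
which is at most $K_\theta^2\EE^\infty[\int_0^T\abs{Z^n_s-Z^\infty_s}^2\dif s]$ plus the $\theta$-mismatch term. The entropy is an expectation under $\PP^\infty$, so hypothesis \eqref{L2-convergence-PPinfty} is used exactly as stated. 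The only change of measure needed is moving the $\theta$-term from $\PP$ to $\PP^\infty$, via the reverse H\"older inequality for the single density $\dif\PP^\infty/\dif\PP$. So the ``main obstacle'' you flag, transferring $Z$-convergence from $\PP^\infty$ back to $\PP$, does not arise there. That route also yields an explicit rate $\norm{\PP^n-\PP^\infty}_{\operatorname{TV}}\lesssim(\ldots)^{1/2}$.

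\textbf{Your route.} Your argument is qualitative and has three ingredients:
\begin{enumerate}
\item uniform integrability of the densities $D^n$, from a uniform reverse H\"older bound that uses $\sup_n\norm{\theta^n(Z^n)}_{\HH^2_{\operatorname{BMO}}(\PP)}<\infty$;
\item convergence of $D^n$ in $\PP$-probability, via Lenglart's inequality and continuity of $(M_T,\langle M\rangle_T)\mapsto e^{M_T-\frac12\langle M\rangle_T}$;
\item the equivalence $\PP\sim\PP^\infty$, to transfer convergence in probability.
\end{enumerate}
In the second ingredient you also need $\langle M^n\rangle_T\to\langle M^\infty\rangle_T$ in probability. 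This is immediate from the triangle inequality in $L^2([0,T])$, but it is worth stating.

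\textbf{Trade-offs.} Your approach avoids the entropy computation and the check that $\int\Delta\,\dif W^\infty$ is a true $\PP^\infty$-martingale. It would also survive weaker, in-probability hypotheses. On the other hand, it gives no rate. It also needs uniformity in $n$ already for the total-variation step, whereas the entropy argument needs uniformity only for condition \eqref{uniform-H2-bound} in the final Young-measure statement.

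Your verification of \eqref{uniform-H2-bound} via \eqref{uni-bdd-density2} and \eqref{uni-bdd-norm} matches the paper.
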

\begin{proof}
Condition \eqref{uniform-H2-bound} is satisfied from Lemma~\ref{lem:uni-bdd-norm}.
In view of Pinsker's inequality, the convergence \eqref{TV-convergence-PPn} follows if $H(\PP^\infty \mid \PP^n) \to 0$, where $H$ is the Kullback information. Then,
\begin{align*}
    H(\PP^\infty \mid \PP^n)
    &= -\EE^\infty \left[\log \frac{\dif \PP^n}{\dif \PP^\infty} \right] \\
    &\leq K_\theta^2 \EE^\infty \left[ \int_0^T \abs{Z_s^n - Z_s^\infty}^2 \dif s \right] + \EE^\infty \left[ \int_0^T \abs{(\theta_s^n - \theta_s^\infty)(Z_s^\infty)}^2 \dif s \right].
\end{align*}
By the reverse H\"older inequality there exist $p \in (1,\infty)$ and $C_p > 0$ such that
\[
    \EE^\infty \left[ \int_0^T \abs{(\theta_s^n - \theta_s^\infty)(Z_s^\infty)}^2 \dif s \right]
    \leq C_p \EE^\infty \left[ \left( \int_0^T \abs{(\theta_s^n - \theta_s^\infty)(Z_s^\infty)}^2 \dif s \right)^p \right].
\]
Thus, the convergence \eqref{L2-convergence-PPinfty} implies \eqref{TV-convergence-PPn}, due to the assumption \eqref{convergence-asm-theta}.
\end{proof}

In view of the preceding proposition, our stability result for quadratic BSDEs reduces to verifying the condition \eqref{L2-convergence-PPinfty}. For this, the following lemma will be useful. 

\begin{lemma}\label{lem:convergence-in-prob}
    A sequence $\{X^n\}_{n \in \NN}$ of $E$-valued random variables on $(\Omega, \calF, \PP)$ converges to a random variable $X^\infty$ in probability if there exists a BMO-bounded sequence of stochastic processes $\{\Psi^n\}_{n \in \NN} \subset \HH_{\operatorname{BMO}}^2(\PP)$ such that 
    \begin{equation}\label{conditions-Psyn}
        \EE^{\Psi^n} \big[ \norm{X^n - X^\infty}_E \big] \to 0 \quad \mbox{where} \quad 
        \dif \PP^{\Psi^n} := \calE(\Psi^n \cdot W)_T \dif \PP.
    \end{equation}
\end{lemma}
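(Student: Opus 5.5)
Since the densities $\calE(\Psi^n \cdot W)_T$ may be small on some events, convergence under the changed measures does not immediately give convergence under $\PP$. The plan is to control this through a uniform negative moment of the densities. We split the expectation of a bounded truncation of the distance and use H\"older's inequality together with the reverse H\"older property of BMO exponentials, applied to the \emph{inverse} densities.

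Set $D_n := \calE(\Psi^n \cdot W)_T$ and $\xi_n := \norm{X^n - X^\infty}_E \wedge 1$. Since $\xi_n \le \norm{X^n - X^\infty}_E$, the assumption \eqref{conditions-Psyn} gives $\EE[D_n \xi_n] \to 0$. Convergence in probability is equivalent to $\EE[\xi_n] \to 0$. Write
\[
\EE[\xi_n] = \EE\big[(D_n \xi_n)^{r}\, D_n^{-r}\, \xi_n^{1-r}\big] \le \EE[D_n \xi_n]^{r}\, \EE\big[D_n^{-r/(1-r)}\big]^{1-r}
\]
for some $r \in (0,1)$. This uses H\"older's inequality with exponents $1/r$ and $1/(1-r)$ together with $\xi_n \le 1$. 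It therefore suffices to show that $\sup_n \EE[D_n^{-s}] < \infty$ for some $s > 0$; one then chooses $r = s/(1+s)$.

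For the negative moment, note that $D_n^{-1} = \calE(-\Psi^n \cdot W^{n})_T$, where $W^n := W - \int_0^\cdot \Psi^n_u \dif u$ is a $\PP^{\Psi^n}$-Brownian motion. A more direct route avoids the change of measure:
\[
D_n^{-s} = \exp\Big(-s\,\Psi^n\cdot W_T + \tfrac{s}{2}\langle \Psi^n \cdot W\rangle_T\Big) = \calE(-s\Psi^n \cdot W)_T \exp\Big(\tfrac{s+s^2}{2}\int_0^T |\Psi^n_u|^2 \dif u\Big).
\]
By the Cauchy--Schwarz inequality,
\[
\EE[D_n^{-s}] \le \EE\big[\calE(-2s\Psi^n\cdot W)_T\big]^{1/2}\, \EE\Big[\exp\Big(c_s \int_0^T |\Psi^n_u|^2 \dif u\Big)\Big]^{1/2}
\]
with $c_s = 2s + 4s^2$ (after regrouping the exponents). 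The first factor is at most $1$, since a stochastic exponential of a BMO martingale is a true martingale. The second factor is bounded uniformly in $n$ by the John--Nirenberg inequality, provided $c_s \sup_n \norm{\Psi^n}^2_{\HH^2_{\operatorname{BMO}}(\PP)} < 1$. This holds by choosing $s$ small, which is possible because the sequence is BMO-bounded.

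The main obstacle is this uniform negative-moment estimate. The delicate point is keeping the constants uniform in $n$, which works precisely because the John--Nirenberg bound depends only on the BMO norm. Alternatively, one could invoke the reverse H\"older inequality of Proposition~\ref{prop:BMO-chara} for $\calE(-\Psi^n\cdot W^n)$ under $\PP^{\Psi^n}$, together with Proposition~\ref{prop:BMO-norm-equivalence}, to get uniform BMO bounds under the changed measures. The direct John--Nirenberg route seems cleaner. The remaining steps are elementary.
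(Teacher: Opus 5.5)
Your argument is correct and follows the same route as the paper: H\"older's inequality moves the convergence from $\PP^{\Psi^n}$ to $\PP$, at the cost of a negative moment of $\calE(\Psi^n \cdot W)_T$, and that moment is bounded uniformly in $n$ by the uniform BMO bound. The differences are cosmetic. You truncate by $\wedge 1$ where the paper uses the power $\norm{X^n - X^\infty}_E^{1/p}$, and you derive the negative-moment bound directly from Cauchy--Schwarz and John--Nirenberg, whereas the paper quotes it from Proposition~\ref{prop:BMO-chara}(1).
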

\begin{proof}
Using the H\"older inequality, we have for any $p \in (1,\infty)$ that
\begin{align*}
\EE \left[ \norm{X^n -X^\infty}_E^{\frac1p} \right]
&= \EE \left[ \calE(\Psi^n \cdot W)_T^{\frac1p} \norm{X^n -X^\infty}_E^{\frac1p} \calE(\Psi^n \cdot W)_T^{-\frac1p} \right] \\
&\leq  \EE^{\Psi^n} \left[ \norm{X^n -X^\infty}_E\right]^{\frac1p} \EE \left[ \left( \frac1{\calE(\Psi^n \cdot W)_T} \right)^{\frac1{q-1}} \right]^{\frac1q},
\end{align*}
where $q$ is the H\"older conjugate of $p$.
Since the assumption \eqref{conditions-Psyn} implies the uniform boundedness of $\Psi^n$ in $\HH^2_{\operatorname{BMO}}(\PP)$, the reverse H\"older inequality yields $q_2 \in (1,\infty)$ such that
\[
\sup_{n \in \NN} \EE \left[ \left( \frac1{\calE(\Psi^n \cdot W)_T} \right)^{\frac1{q_2-1}} \right] < \infty.
\]
Therefore, there exists a constant $C > 0$ that does not dependent on $n \in \NN$ such that
\begin{equation*}
\EE \left[\norm{X^n -X^\infty}_E^{\frac1{p_2}} \right]
\leq C \EE^{\Psi^n} \big[ \norm{X^n -X^\infty}_E \big]^{\frac1{p_2}},
\end{equation*}
where $p_2$ is the conjugate of $q_2$.
The desired convergence therefore follows from \eqref{conditions-Psyn}.
\end{proof}

We are now ready to state our stability result for quadratic BSDEs. 

\begin{theorem}\label{thm:general-stability}
For any $n \in \NN^\ast$ let $\xi^n : \Omega \to \RR$ be a random variable and $$H^n:[0,T] \times \Omega \times \RR \times \RR^d \to \RR$$ be a measurable function such that $H^n_\cdot(\cdot,y,z)$ is $\FF$-progressively measurable for any $(y,z)$, and
the (local) Lipschitz continuity condition
\begin{equation}\label{stability-asm-Lip}
    \abs{H_t^n(y,z) - H_t^n(\bar{y}, \bar{z})}
    \leq K_H \{ \abs{y - \bar{y}} + (1 + \abs{z} + \abs{\bar{z}})\abs{z - \bar{z}} \},
\end{equation}
holds for some $K_H > 0$. Assume that the BSDEs 
\begin{equation}\label{general-n-BSDE}
    Y_t^n = \xi^n + \int_t^T H_s^n (Y_s^n, Z_s^n) \dif s - \int_t^T Z_s^n \dif W_s \quad (n \in \NN^\ast)
\end{equation}
admit solutions
\[
    (Y^n, Z^n) \in \bigcap_{p < \infty} \SS^p (\PP) \times \HH_{\operatorname{BMO}}^2(\PP),
\]
and let the family of functions $\theta^n$ and probability measures $\PP^n \in \calP(\Omega)$ be as in Lemma~\ref{lem:uni-bdd-norm} with $I=\NN^\ast$.
If the integrability conditions \eqref{uni-bdd-Z-theta} hold for $(\theta^n, Z^n)$, if 
\begin{equation}\label{stability-asm-integrability}
\begin{split}
    \xi^n \in \bigcap_{p < \infty} L^p(\Omega), \quad
    \sup_{n \in \NN} \EE\left[\abs{\int_0^T \abs{H_s^n(0,0)} \dif s}^p \right] < \infty
    \quad \text{for all}~p \in [0,\infty),
\end{split}
\end{equation}
and if for all $p \in [1,\infty)$ and $t \in [0,T]$, 
\begin{equation}\label{stability-asm-convergence}
\begin{split}
    & \EE \left[ \abs{\xi^n - \xi^\infty}^p + \left( \int_0^T \abs{(\theta^n_s - \theta^\infty_s)(Z_s^\infty)}^2 \dif s \right)^p \right] \to 0, \\
    &\int_t^T (H_s^n - H_s^\infty)(Y_s^\infty, Z_s^\infty) \dif s \to 0 \quad\PP\text{-a.s.}
\end{split}
\end{equation}
as $n \to \infty$, then the convergence \eqref{general-stable-convergence} holds with $X^n = (X,Z^n)$ for any $X \in \bigcap_{p < \infty} \LL^p(E;\PP)$.
\end{theorem}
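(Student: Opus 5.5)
By Proposition~\ref{prop:TV-convergence} (which applies because the assumptions of Lemma~\ref{lem:uni-bdd-norm} hold with $I=\NN^\ast$ and the $\theta$-part of \eqref{stability-asm-convergence} is exactly \eqref{convergence-asm-theta}), it suffices to verify \eqref{L2-convergence-PPinfty} for $X^n=(X,Z^n)$:
\[
\EE^\infty\Big[\int_0^T\abs{Z^n_s-Z^\infty_s}^2\dif s\Big]\to0 .
\]
The plan is to prove this convergence under a suitable family of Girsanov measures built from the difference of the two solutions. We then transfer it to $\PP$, and finally to $\PP^\infty$, using the uniform BMO bounds.

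\textbf{Step 1 (linearization).} Set $\delta Y:=Y^n-Y^\infty$, $\delta Z:=Z^n-Z^\infty$ and $\delta\xi:=\xi^n-\xi^\infty$. Write
\[
H^n_s(Y^n_s,Z^n_s)-H^\infty_s(Y^\infty_s,Z^\infty_s)=a^n_s\,\delta Y_s+b^n_s\cdot\delta Z_s+\delta H^n_s,
\qquad \delta H^n_s:=(H^n_s-H^\infty_s)(Y^\infty_s,Z^\infty_s),
\]
where, by \eqref{stability-asm-Lip}, $\abs{a^n}\le K_H$ and $\abs{b^n_s}\le K_H(1+\abs{Z^n_s}+\abs{Z^\infty_s})$.

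Before using this, we need a uniform BMO bound on $Z^n$. The assumption only says $Z^n\in\HH^2_{\operatorname{BMO}}$ for each $n$, but \eqref{uni-bdd-Z-theta} is assumed to hold for $(\theta^n,Z^n)$, and this includes $\sup_n\norm{Z^n}_{\HH^2_{\operatorname{BMO}}(\PP)}<\infty$. Hence $\Psi^n:=b^n$ is BMO-bounded uniformly in $n$. Let $\dif\PP^{\Psi^n}=\calE(\Psi^n\cdot W)_T\dif\PP$; under this measure $W^{\Psi^n}:=W-\int\Psi^n\dif s$ is a Brownian motion, and
\[
\dif\,\delta Y_t=-(a^n_t\,\delta Y_t+\delta H^n_t)\dif t+\delta Z_t\dif W^{\Psi^n}_t,\qquad \delta Y_T=\delta\xi .
\]

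\textbf{Step 2 (estimate under $\PP^{\Psi^n}$).} Apply It\^o to $e^{2K_Ht}\abs{\delta Y_t}^2$ to get
\[
\EE^{\Psi^n}\Big[\int_0^T\abs{\delta Z_s}^2\dif s\Big]\le C\,\EE^{\Psi^n}\Big[\abs{\delta\xi}^2+\sup_t\Big|\int_t^T\delta H^n_s\dif s\Big|^2\Big].
\]
The stochastic integral is a true martingale under $\PP^{\Psi^n}$ because $\delta Z$ is BMO there (Proposition~\ref{prop:BMO-norm-equivalence}). The $\delta H$ term enters through the cross term $\int\delta Y\,\delta H^n\dif s$; rather than expanding it directly, we bound it using the process $\Gamma_t:=\int_t^T\delta H^n\dif s$, for instance by considering $\delta Y-\Gamma$, or by bounding $\sup_t\abs{\delta Y_t}$ through a conditional-expectation representation.

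Now use the reverse H\"older inequality, which holds uniformly in $n$, to pass from $\EE^{\Psi^n}$ to $\PP$-moments of slightly higher order. The first term then tends to $0$ by \eqref{stability-asm-convergence}.

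\textbf{Step 3 (main obstacle: the $\delta H$ term).} We only know that $\int_t^T\delta H^n_s\dif s\to0$ a.s. for each fixed $t$, and we need $\EE[\sup_t\abs{\int_t^T\delta H^n_s\dif s}^{2p}]\to0$. Uniform integrability follows from
\[
\abs{\delta H^n_s}\le \abs{H^n_s(0,0)}+\abs{H^\infty_s(0,0)}+2K_H\big(\abs{Y^\infty_s}+(1+\abs{Z^\infty_s})\abs{Z^\infty_s}\big),
\]
combined with \eqref{stability-asm-integrability}, $Y^\infty\in\bigcap_p\SS^p$, and the fact that BMO implies all moments of $\int_0^T\abs{Z^\infty}^2\dif s$. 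For the supremum over $t$, pass from pointwise to uniform convergence by a density argument: take a.s. convergence on a countable dense set of times, and control the increments $\int_s^t\abs{\delta H^n}\dif r$ by $\int_s^t g_n\dif r$ with $g_n$ uniformly integrable. Should this equicontinuity fail, a fallback is to avoid the supremum altogether by working in Step 2 with $\EE^{\Psi^n}[\int_0^T\abs{\delta Y_s}\,\abs{\delta H^n_s}\dif s]$ plus dominated convergence, which needs a more careful handling. This is the step I expect to be the most delicate.

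\textbf{Step 4 (conclusion).} Steps 2--3 give $\EE^{\Psi^n}[\int_0^T\abs{\delta Z}^2\dif s]\to0$. Lemma~\ref{lem:convergence-in-prob} then yields $\int_0^T\abs{\delta Z}^2\dif s\to0$ in $\PP$-probability. The energy inequalities give uniform $L^p(\PP)$ bounds for $\int_0^T\abs{\delta Z}^2\dif s$, so the convergence holds in $L^p(\PP)$ for every $p$. H\"older's inequality together with the reverse H\"older bound on $\dif\PP^\infty/\dif\PP$ then gives \eqref{L2-convergence-PPinfty}. Proposition~\ref{prop:TV-convergence} and Proposition~\ref{prop:stability-Young-meas} finally give \eqref{general-stable-convergence}.
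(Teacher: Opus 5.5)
Your overall architecture matches the paper's: the reduction via Proposition~\ref{prop:TV-convergence}, the linearization with the uniformly BMO-bounded $\Psi^n=b^n$, Lemma~\ref{lem:convergence-in-prob}, and the final transfer to $\PP^\infty$. However, your treatment of the $\delta H$ term has a genuine gap. With $\Gamma^n_t:=\int_t^T\delta H^n_s\dif s$, the claim $\EE[\sup_t\abs{\Gamma^n_t}^{2p}]\to0$ does not follow from the hypotheses, and it is false in general. Take $T=1$, $\xi^n=0$, $\theta^n=0$, $H^\infty=0$ and, for $n\ge4$, $H^n_t(y,z)=h^n(t):=n\mathbf{1}_{[1/2,1/2+1/n]}(t)-n\mathbf{1}_{(1/2+1/n,1/2+2/n]}(t)$. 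Every assumption holds: $\int_0^T\abs{h^n}\dif s=2$, and $\Gamma^n_t\to0$ for each fixed $t$. Yet $\Gamma^n_{1/2+1/n}=-1$ for all $n$. Your equicontinuity argument breaks because moment bounds on $\int_0^T\abs{H^n_s(0,0)}\dif s$ say nothing about concentration in time. Your fallback fails on the same example. There $\delta Y=\Gamma^n$ (so $\sup_t\abs{\delta Y_t}=1$ as well), $Z^n=Z^\infty=0$, and $\int_0^T\abs{\delta Y_s}\abs{\delta H^n_s}\dif s=1$ for all $n$. Putting absolute values on the cross term destroys exactly the cancellation that makes the conclusion true (here $2\int_0^T\Gamma^n_s h^n(s)\dif s=(\Gamma^n_0)^2=0$). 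Working with $\delta Y-\Gamma^n$ does not help either, since $\Gamma^n$ is not adapted.

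The missing idea, which is the paper's Step~3, is to avoid both the It\^o cross term and the supremum by fixing $t$. Write $\delta Y_t+\int_t^T\delta Z_s\dif W^{\Psi^n}_s=\delta\xi+\int_t^T a^n_s\delta Y_s\dif s+\Gamma^n_t$, square, and take $\EE^{\Psi^n}$-expectations. Because $\delta Y_t$ is $\calF_t$-measurable and $\delta Z$ is BMO under $\PP^{\Psi^n}$, the mixed term vanishes, and
\[
\EE^{\Psi^n}[\abs{\delta Y_t}^2]+\EE^{\Psi^n}\Big[\int_t^T\abs{\delta Z_s}^2\dif s\Big]\le 3\Big(\EE^{\Psi^n}[\abs{\delta\xi}^2]+TK_H^2\int_t^T\EE^{\Psi^n}[\abs{\delta Y_s}^2]\dif s+\alpha^n_t\Big),\qquad \alpha^n_t:=\EE^{\Psi^n}[\abs{\Gamma^n_t}^2].
\]
Gronwall then gives $\EE^{\Psi^n}[\int_0^T\abs{\delta Z_s}^2\dif s]\le C(\EE^{\Psi^n}[\abs{\delta\xi}^2]+\alpha^n_0+\int_0^T\alpha^n_t\dif t)$. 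This bound needs only pointwise-in-$t$ convergence. Reverse H\"older together with your uniform integrability bound yields $\alpha^n_t\to0$ for each $t$ and $\sup_{n,t}\alpha^n_t<\infty$, so $\int_0^T\alpha^n_t\dif t\to0$ by dominated convergence. With this replacement, the rest of your argument goes through.
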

\begin{proof}
Let $\Delta Z^n := Z^n - Z^\infty$.
Since $\theta^n$ and $Z^n$ satisfy the assumptions in Lemma~\ref{lem:uni-bdd-norm}, the convergence \eqref{general-stable-convergence} holds from Proposition~\ref{prop:TV-convergence} if~\footnote{A stronger convergence result for $(Z^n)$ has been obtained in \cite[Theorem 7.3.4]{zhang2017backward} under a uniform boundedness on the sequence $(Y^n)$ in the $\SS^\infty$-norm. We do not require this condition for \eqref{convergence-DeltaZ-wrt-PPinfty}.}
\begin{equation}\label{convergence-DeltaZ-wrt-PPinfty}
    \EE^\infty \left[\int_0^T \abs{\Delta Z_s^n}^2 \dif s \right] \to 0.
\end{equation}

\hspace{2mm}

\textsl{Step 1.} The convergence \eqref{convergence-DeltaZ-wrt-PPinfty} follows from Vitali's convergence theorem if 
\begin{equation}\label{DeltaZ-uni-ible}
    \left\{ \int_0^T \abs{\Delta Z_s^n}^2 \dif s \right\}_{n \in \NN}~\text{is uniformly integrable w.r.t.}~\PP^\infty,
\end{equation}
and
\begin{equation}\label{DeltaZ-prob-convergence-infty}
    \int_0^T \abs{\Delta Z_s^n}^2 \dif s \to 0
    \quad \text{in probability under}~\PP^\infty.
\end{equation}
By the energy inequality (Proposition~\ref{prop:energy-ineq}) and the equivalency of the BMO norms (Proposition~\ref{prop:BMO-norm-equivalence}), it follows from our assumptions that
\[
\sup_{n \in \NN}\EE^\infty \left[ \left( \int_0^T \abs{\Delta Z_s^n}^2 \dif s \right)^2 \right]
\leq 2 \sup_{n \in \NN} \norm{\Delta Z^n}_{\HH^2_{\operatorname{BMO}}(\PP^\infty)}^4
< \infty.
\]
This implies the uniform integrability \eqref{DeltaZ-uni-ible}.
To prove the convergence \eqref{DeltaZ-prob-convergence-infty}, it suffices to prove the corresponding convergence in probability under $\PP$ since $\PP^\infty \ll \PP$.

\hspace{2mm}

\textsl{Step 2.} By Lemma~\ref{lem:convergence-in-prob}, the convergence \eqref{DeltaZ-prob-convergence-infty} under $\PP$ follows if we find a BMO-bounded sequence $\{\Psi^n\}_{n \in \NN} \subset \HH_{\operatorname{BMO}}^2(\PP)$ that satisfies \eqref{conditions-Psyn} with $X^n := \int_0^T \abs{\Delta Z_s^n}^2 \dif s$ and $X^\infty := 0$.
For this, we use the BSDEs \eqref{general-n-BSDE}. Setting $\Delta \xi^n := \xi^n - \xi^\infty$ and $\Delta Y^n := Y^n - Y^\infty$, we see that
\begin{equation}\label{stability-linearization}
\begin{aligned}
    \Delta Y_t^n 
    &= \Delta \xi^n + \int_t^T \{ \delta_y H_s^n \Delta Y_s^n  + \Delta_n H_s (Y_s^\infty, Z_s^\infty) \} \dif s
    - \int_t^T \Delta Z_s^n \dif \bar{W}_s^n,
\end{aligned}
\end{equation}
where 
\begin{gather*}
    \bar{W}_t^n := W_t - \int_0^t \delta_z H_s^n \dif s, \quad
    \Delta_n H_t (y,z) := H_t^n (y,z) - H_t^\infty (y,z), \\
    \delta_y H_t^n := \frac{H_t^n (Y_t^n, Z_t^n) - H_t^n (Y_t^\infty, Z_t^n)}{\Delta Y_t^n} \mathbf{1}_{\{\Delta Y_t^n \ne 0\}}, \\
    \delta_z H_t^n := \frac{H_t^n (Y_t^\infty, Z_t^n) - H_t^n (Y_t^\infty, Z_t^\infty)}{\abs{\Delta Z_t^n}^2} \cdot \Delta Z_t^n \mathbf{1}_{\{ \abs{\Delta Z_t^n} \ne 0\}}.
\end{gather*}
In particular, $\bar{W}^n$ is a Brownian motion under the probability measure $\barPP^n$ with density $\frac{\dif \barPP^n}{\dif \PP} = \calE(\delta_z H^n \cdot W)_T;$ the density is well defined since $\delta_z H^n \in \HH_{\operatorname{BMO}}^2(\PP)$ by assumption. In the next step, we show that condition \eqref{conditions-Psyn} holds with $\Psi^n := \delta_z H^n$. 

\hspace{2mm}

\textsl{Step 3.}
The uniform BMO bound follows from {the assumption \eqref{uni-bdd-Z-theta}}. It remains to prove 
\begin{equation}\label{DeltaZ-convergence-PPn}
    \bar{\EE}^n\left[ \int_0^T \abs{\Delta Z_s^n}^2 \dif s \right] \to 0
    \quad (n \to \infty).
\end{equation}

The above expected values are well defined since $\Delta Z^n \in \HH_{\operatorname{BMO}}^2(\PP)$ and $\delta_z H^n \in \HH_{\operatorname{BMO}}^2(\PP)$ and so Proposition~\ref{prop:BMO-norm-equivalence} implies $\Delta Z^n \in \HH_{\operatorname{BMO}}^2(\bar{\PP}^n)$. By moving the stochastic integral to the left hand side in the equation \eqref{stability-linearization} and then taking expectations under $\barPP^n$, we see that
\begin{equation}\label{stability-expec-PPn}
\begin{aligned}
\bar{\EE}^n &[\abs{\Delta Y_t^n}^2] + \bar{\EE}^n\left[\int_t^T \abs{\Delta Z_s^n}^2 \dif s \right] \\
&= \bar{\EE}^n \left[ \abs{\Delta \xi^n + \int_t^T \{ \delta_y H_s^n \Delta Y_s^n + \Delta_n H_s (Y_s^\infty, Z_s^\infty) \} \dif s}^2 \right] \\
&\leq 3 \left\{ T\abs{K_H}^2 \int_t^T \bar{\EE}^n[\abs{\Delta Y_s^n}^2] \dif s + \bar{\EE}^n \left[ \abs{\Delta \xi^n}^2 + \abs{\int_t^T \Delta_n H_s(Y_s^\infty, Z_s^\infty)}^2 \right]  \right\}.
\end{aligned}
\end{equation}
From Gronwall's inequality it then follows that
\[
\bar{\EE}^n[\abs{\Delta Y_t^n}^2]
\leq C \left( \bar{\EE}^n[\abs{\Delta \xi^n}^2] + \alpha_t^n + \int_t^T \alpha_s^n \dif s \right)
\]
for some $C>0$, where 
\[
\alpha_t^n := \bar{\EE}^n \left[ \abs{\int_t^T \Delta_n H_s(Y_s^\infty, Z_s^\infty) \dif s}^2 \right].
\]
Plugging this result into the inequality \eqref{stability-expec-PPn} yields for some $\tilde{C} > 0$
\begin{equation}\label{stability-estimate-Z}
\begin{aligned}
\bar{\EE}^n\left[ \int_0^T \abs{\Delta Z_s^n}^2 \dif s \right]
&\leq \tilde{C} \left (\bar{\EE}^n[\abs{\Delta \xi^n}^2] + \alpha_0^n + \int_0^T \alpha_t^n \dif t \right).
\end{aligned}
\end{equation}

Since $\sup_{n \in \NN} \norm{\delta_z H^n}_{\HH^2_{\operatorname{BMO}}(\PP)} < \infty$, the reverse H\"older inequality yields $p_1 \in (1,\infty)$ s.t.
\[
\bar{\EE}^n [\abs{\Delta \xi^n}^2]
\leq \EE[\calE(\delta_z H^n \cdot W)_T^{p_1}]^{\frac1{p_1}} \EE [\abs{\Delta \xi^n}^{2q_1}]^{\frac1{q_1}}
\leq C_{p_1} \EE [\abs{\Delta \xi^n}^{2q_1}]^{\frac1{q_1}},
\]
where $q_1$ is the H\"older conjugate of $p_1$ and $C_{p_1}$ depends only on $p_1$.
Thus, by assumption 
\[
\bar{\EE}^n [\abs{\Delta \xi^n}^2] \to 0 \quad (n \to \infty).
\]
Analogously, we can apply the reverse H\"older inequality to obtain
\[
\alpha_t^n \leq C_{p_1} \EE \left[ \abs{\int_t^T \Delta_n H_s (Y_s^\infty, Z_s^\infty) \dif s}^{2q_1} \right]^{\frac1{q_1}}.
\]
Therefore, $\alpha_t^n \to 0$ for all $t \in [0,T]$. Since $\sup_{n \in \NN} \int_0^T \abs{\alpha_t^n}^{q_1} \dif t < \infty$ the functions are uniformly integrable, hence   
$
\int_0^T \alpha_t^n \dif t \to 0.
$
Thus, the second term also converges to zero.
\end{proof}


\subsection{Continuity of the solution mapping}

Since the space $\calC_d \times \RR^d$ is Banach space, the set $\calP_1(\calC_d \times \RR^d)$ is a Polish space under the $\calW_1$-topology. Thus, by Proposition~\ref{prop:spaceY-metrizable} the stable topology $\calS(\calP_1(\calC_d \times \RR^d))$ on the set of Young measures on $\calP_1(\calC_d \times \RR^d)$ is metrizable. Since this topology is weaker than the $\calS_1(\calP_1(\calC_d \times \RR^d))$-topology when restricted to the set of \textsl{integrable} Young measures, any compact set $\calK \subset \calY_1$ is metrizable. This allows us to prove that the solution mapping $\Phi$ is sequentially continuous and continuous on any compact set.\footnote{We used the following fact. If $f : X \to Y$ is a continuous bijection and if $X$ is compact and $Y$ is Hausdorff, then $f$ is homeomorphism. In particular, any two compact topologies $\tau_0 \subset \tau_1$ on the common set $X$ coincide if $\tau_0$ is Hausdorff; c.f.\ \cite[Theorem 26.6]{munkres2000topology} and \cite[p.62]{rudin1991functional}. 
}

\begin{corollary}\label{cor:FP-continuity}
Under Assumption~\ref{asm:BSDE-standing-asm}, the solution mapping $\Phi: \calP_1 \times \calY_1 \to \calP_1 \times \calY_1$ is sequentially continuous and continuous on any compact subset of $\calP_1 \times \calY_1$.
\end{corollary}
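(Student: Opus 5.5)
We will prove sequential continuity; continuity on compact subsets then follows from the metrizability remark preceding the statement. Fix a sequence $(\mu^n,\boldnu^n)\to(\mu^\infty,\boldnu^\infty)$ in $\calP_1\times\calY_1$, i.e.\ $\mu^n\to\mu^\infty$ in $\calW_1$ and $\boldnu^n\to\boldnu^\infty$ in $\calS_1(\calP_1(\calC_d\times\RR^d))$. We apply Theorem~\ref{thm:general-stability} with
\[
\xi^n:=G(X,\mu^n),\qquad H^n_t(y,z):=\int_{\calP_1(\calC_d\times\RR^d)}\bar H_t(X,z,\mu^n,q)\,\nu^n_t(\dif q),\qquad \theta^n_t(z):=B_t(X,z,\mu^n).
\]
With this choice, \eqref{general-n-BSDE} is \eqref{BSDE-mu-nu}, $\PP^n=\PP^{\mu^n,\boldnu^n}$, and $E=\calC_d$ with the given $X$. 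The conclusion \eqref{general-stable-convergence} with $X^n=(X,Z^n)$ is exactly convergence of the second component of $\Phi(\mu^n,\boldnu^n)$. The first component, $\calL^n(X)\to\calL^\infty(X)$ in $\calW_1$, follows from the total variation convergence in Proposition~\ref{prop:TV-convergence} together with the uniform $L^2$ bound \eqref{uni-bdd-density2}, by the same truncation argument as in Proposition~\ref{prop:stability-Young-meas}.

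\textbf{Verifying the hypotheses.} Existence of $(Y^n,Z^n)\in\SS^\infty\times\HH^2_{\operatorname{BMO}}$ comes from Proposition~\ref{prop:solution-map}. Convergent sequences have bounded first moments: $\sup_n M_1(\mu^n)<\infty$, and $\sup_n\int\!\!\int M_1(q)\,\boldnu^n<\infty$ by convergence in $\calS_1$. Hence the function $\phi$ in Proposition~\ref{prop:solution-map} gives $\sup_n\|Z^n\|_{\HH^2_{\operatorname{BMO}}}<\infty$. Assumption~\ref{asm:BSDE-standing-asm}(1) gives $|\theta^n(0)|\le L(1+\sup_n M_1(\mu^n))$, so \eqref{uni-bdd-Z-theta} holds. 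Assumption~\ref{asm:BSDE-standing-asm}(2) gives Lipschitz continuity of $\theta^n$ with constant $K$. The local Lipschitz bound \eqref{stability-asm-Lip} for $H^n$ follows from the bounds on $F$ and on $z\cdot B$; the cross term $z\cdot B(z,\mu^n)$ has a coefficient depending on $\sup_n M_1(\mu^n)$, which is uniform, so one constant $K_H$ works. The integrability condition \eqref{stability-asm-integrability} holds because $X\in\bigcap_p\SS^p$, while $G$, $F(\cdot,0,\cdot)$ and $B(\cdot,0,\cdot)$ are bounded in $x$ with linear growth in the moments, which are uniformly bounded.

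\textbf{Convergence conditions.} For $\xi^n\to\xi^\infty$ and $\theta^n(Z^\infty)\to\theta^\infty(Z^\infty)$ we use continuity of $q^x\mapsto G(x,q^x)$ and $q^x\mapsto B_t(x,z,q^x)$ pointwise in $\omega$. Dominated convergence in $L^p$ then applies: the bounds are $L(1+\sup_n M_1(\mu^n))$ for $G$, and $C(1+|Z^\infty_s|)$ for $\theta$, which lies in every $L^p$ via the energy inequality. The main obstacle is the pathwise condition
\[
\int_t^T\!\!\int\big(\bar H_s(X,Z^\infty_s,\mu^n,q)\nu^n_s(\dif q)-\bar H_s(X,Z^\infty_s,\mu^\infty,q)\nu^\infty_s(\dif q)\big)\dif s\to0 .
\]
We split it into two parts. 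The first, with $\mu^n$ replaced by $\mu^\infty$ inside $\bar H$ under $\nu^n$, is handled by continuity of $B$ in $q^x$ and dominated convergence, since the $z\cdot B$ part does not involve $q$. The second is $\int\!\!\int F_s(X(\omega),Z^\infty_s(\omega),q)\,(\boldnu^n-\boldnu^\infty)(\dif s,\dif q)$ for fixed $\omega$. Here we want the integrand $(s,q)\mapsto \mathbf 1_{[t,T]}(s)F_s(X,Z^\infty_s,q)$ to be a Carathéodory integrand: measurable in $s$, continuous in $q$ by Assumption~\ref{asm:BSDE-standing-asm}(3), and with growth bounded by $c_\omega(s)(1+M_1(q))$, where $c_\omega(s)=C(1+|Z^\infty_s|^2)$. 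The portmanteau theorem for $\calS_1$ (Proposition~\ref{prop:Portmanteau-thm-intYoung}) then gives the convergence. The difficulty is that $c_\omega$ is only integrable in $s$, not bounded, while $\calS_1$-convergence tests integrands with $L^1$-in-time coefficient times $1+M_1(q)$ only in the additive form $a(s)+b\,M_1(q)$. To get around this, I would split $F=F(\cdot,0,q)+[F(\cdot,z,q)-F(\cdot,0,q)]$. The first summand has growth $L(1+M_1(q))$, so the portmanteau theorem applies. For the second, whose bound $K(1+|z|)|z|$ is independent of $q$, I would truncate on $\{|Z^\infty_s|\le R\}$ and let $R\to\infty$ using $\int_0^T|Z^\infty_s|^2\dif s<\infty$ a.s.
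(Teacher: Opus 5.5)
Your proposal is correct and follows essentially the same route as the paper: the same choices of $\xi^n,\theta^n,H^n$ in Theorem~\ref{thm:general-stability}, the uniform BMO bound from Proposition~\ref{prop:solution-map}, dominated convergence for $\xi^n$ and $\theta^n$, and the same splitting of the driver difference into an $F$-part handled by Proposition~\ref{prop:Portmanteau-thm-intYoung} and a $B$-part handled by dominated convergence. The differences are minor. The paper observes directly that $\abs{F_s(X,Z^\infty_s,q)}\le C(1+\abs{Z^\infty_s}^2+\calW_1(\delta_0,q))$ is already of the additive form $\eta(s)+d(q_0,q)$ with $\eta\in L^1$, so your split of $F$ and truncation in $Z^\infty$ are unnecessary (though harmless). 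You also spell out the $\calW_1$-convergence of the first component $\calL^n(X)$, which the paper leaves implicit.
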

\begin{proof}
In view of the preceding discussion, we only need to establish sequential continuity. Let 
\[
    \{(\mu^n, \boldsym{\nu}^n)\}_{n \in \NN} \subset \calP_1 \times \calY_1
\]
be a sequence  that converges to $(\mu^\infty, \boldsym{\nu}^\infty) \in \calP_1 \times \calY_1$, and let $\nu_\cdot^n$ be the unique disintegration of the Young measure $\boldsym{\nu}^n$. For each $n \in \NN^\ast$, let
\begin{equation}\label{parameters-for-continuity}
\begin{gathered}
\xi^n := G(X,\mu^n), \quad \theta_t^n(z) := B_t(X,z,\mu^n), \quad \PP^n := \PP^{\mu^n,  \boldsym{\nu}^n}, \\
H_t^n(z) := \int_{\calP_1(\calC_d \times \RR^d)} \bar{H}_t(X, z, \mu^n, q) \nu_t(\dif q), \quad
(Y^n, Z^n) := (Y^{\mu^n, \boldsym{\nu}^n}, Z^{\mu^n, \boldsym{\nu}^n}).
\end{gathered}
\end{equation}

We need to verify the assumptions of Theorem~\ref{thm:general-stability}.
The assumptions on $X$ and $\theta^n$ are clear.
Since the sequence $\{(\mu^n, \boldnu^n)\}_{n \in \NN}$ converges in $\calP_1 \times \calY_1$ and hence
\[
\sup_{n \in \NN} M_1(\mu^n) < \infty, \quad
\sup_{n \in \NN} \int_0^T \int_{\calP_1(\calC_d \times \RR^d)} \calW_1(\delta_0, q) \boldnu^n (\dif q, \dif t) < \infty,
\]
the integrability condition \eqref{stability-asm-integrability} follows from Assumption~\ref{asm:BSDE-standing-asm}. The uniform BMO-bound \eqref{uni-bdd-Z-theta} follows from Proposition~\ref{prop:solution-map}.
It remains to verify the convergence \eqref{stability-asm-convergence}. The convergences
\begin{equation}\label{convergence-xi}
\EE[\abs{\xi^n - \xi^\infty}^p] \to 0, \quad
    \EE\left[ \left( \int_0^T \abs{(\theta_s^n - \theta_s^\infty)(Z_s^\infty)}^2 \dif s \right)^p \right]
    \to 0  
    \quad \text{for all}~p \in [1,\infty),
\end{equation}
follow from the dominated convergence theorem.
In addition, for any $t \in [0,T]$, 
\begin{align*}
    \int_t^T (H_s^n - H_s^\infty)(Z_s^\infty) \dif s
    &= \int_t^T \int_{\calP_1(\calC_d \times \RR^d)} F_s(X, Z_s^\infty, q) (\nu_s^n - \nu_s^\infty)(\dif q) \dif s \\ 
    &\hspace{5em} + \int_t^T Z_s^\infty \cdot \big\{ B_s(X, Z_s^\infty, \mu^n) - B_s(X,Z_s^\infty, \mu^\infty) \big\} \dif s
\end{align*}
where, by assumption, it holds for some $C>0$ that
\[
\abs{F_t(X,Z_t^\infty,q)}  \leq C(1 + \abs{Z_t^\infty}^2 + \calW_1(\delta_0,q)) \quad \PP\text{-a.s.}
\]
Hence it follows from Proposition~\ref{prop:Portmanteau-thm-intYoung} and the dominated convergence theorem that
\begin{equation}\label{convergence-H}
    \int_t^T (H_s^n - H_s^\infty)(Z_s^\infty) \dif s \to 0
    \quad \PP\text{-a.s.\ for any}~t \in [0,T].
\end{equation}
\end{proof}

In view of the above corollary, to establish the existence of a fixed point it remains to identify a compact, convex subset of $\calP_1 \times \calY_1$ that the solution mapping maps to itself. In the following  section, we establish sufficient conditions for compactness in the space $\calY_1$. In particular, the range $\calQ^* \times \calK^*$ of our solution mapping introduced in Corollary \ref{cor:maps-onto-itself} turns out to be compact. 


\section{Compactness in the space of integrable Young measures}
\label{sec:cpt}

We start with the following basic lemma. The proof follows from the fact that weak relative compactness together with uniform integrability implies relative compactness in the $\calW_1$-topology (c.f.\ \cite[Corollary 5.6]{Carmona2018}).

\begin{lemma}\label{lem:general-W1-cpt}
    Let $\calQ \subset \calP_1(E)$ and $\frakK \subset \calP_1(\RR^d)$ be relatively compact subsets of probability measures. Then, the set
    \[
    \bar{\frakK} := \{q \in \calP_1(E \times \RR^d) \mid q^x \in \calQ,~ q^z \in \frakK \}
    \]
    is relatively compact in $\calP_1(E \times \RR^d)$.
\end{lemma}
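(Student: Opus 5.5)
Since $\calP_1(E \times \RR^d)$ with the $\calW_1$-topology is metrizable, it suffices to show that every sequence $(q_n) \subset \bar{\frakK}$ has a $\calW_1$-convergent subsequence. As indicated before the statement, I will use the characterization that a set of measures in $\calP_1$ is relatively compact in $\calW_1$ if and only if it is weakly relatively compact, i.e.\ tight, and has uniformly integrable first moments (\cite[Corollary 5.6]{Carmona2018}). The plan is to check these two properties for $\bar{\frakK}$ using only what is known about its marginals, without any control on the coupling between the $x$- and $z$-components.

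\emph{Tightness.} Relative compactness of $\calQ$ and $\frakK$ in $\calW_1$ implies relative weak compactness. By Prokhorov's theorem, $\calQ$ and $\frakK$ are therefore tight. Here I use that $E$ is Polish; in the application $E = \calC_d$, which is a separable Banach space. Given $\varepsilon>0$, choose compact sets $K_1 \subset E$ and $K_2 \subset \RR^d$ with $q^x(K_1^c) \le \varepsilon/2$ and $q^z(K_2^c)\le \varepsilon/2$ for all admissible $q$. Then $K_1\times K_2$ is compact and
\[
q\big((K_1\times K_2)^c\big) \le q^x(K_1^c) + q^z(K_2^c) \le \varepsilon .
\]
So $\bar{\frakK}$ is tight, and hence weakly relatively compact by Prokhorov's theorem.

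\emph{Uniform integrability of first moments.} I use the product norm $\norm{(x,z)} = \norm{x}_E + \abs{z}$. For $R>0$, the inclusion $\{\norm{x}_E+\abs{z} > R\} \subset \{\norm{x}_E > R/2\} \cup \{\abs{z}>R/2\}$ gives
\[
\int (\norm{x}_E+\abs{z}) \boldsymbol{1}_{\{\norm{x}_E+\abs{z}>R\}} \,\dif q
\le 2\int \norm{x}_E \boldsymbol{1}_{\{\norm{x}_E>R/2\}}\,\dif q^x + 2\int \abs{z}\boldsymbol{1}_{\{\abs{z}>R/2\}}\,\dif q^z .
\]
This follows by bounding $\norm{x}_E+\abs{z} \le 2\max(\norm{x}_E,\abs{z})$ and splitting according to which of the two terms is the maximum. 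The right-hand side involves only the marginals. Since $\calQ$ and $\frakK$ are $\calW_1$-relatively compact, they have uniformly integrable first moments, again by the converse direction of \cite[Corollary 5.6]{Carmona2018}. Hence the right-hand side tends to $0$ as $R\to\infty$, uniformly over $\bar{\frakK}$.

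\emph{Conclusion and main obstacle.} Combining the two properties, any sequence in $\bar{\frakK}$ has a weakly convergent subsequence whose first moments are uniformly integrable, so it converges in $\calW_1$; the limit lies in $\calP_1$ by Fatou's lemma. The only point needing care is justifying the equivalence "$\calW_1$-relative compactness $\Leftrightarrow$ tightness plus uniform integrability of first moments" in a general Polish, possibly infinite-dimensional, space $E$. This is standard, so I do not expect a genuine obstacle; the rest is routine.
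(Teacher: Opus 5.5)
Your proof is correct and follows the same route as the paper. The paper simply invokes the fact that weak relative compactness together with uniform integrability of first moments gives $\calW_1$-relative compactness \cite[Corollary 5.6]{Carmona2018}. You supply the details it leaves implicit: tightness via products of compact sets, and the reduction of uniform integrability to the marginals via $\norm{x}_E + \abs{z} \le 2\max(\norm{x}_E, \abs{z})$.
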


The next lemma is essentially a corollary of \cite[Proposition~7.8]{carmona_lacker_2015}.

\begin{lemma}\label{lem:cpt-in-Banach}
For any reference measure $\frak{q}^x \in \bigcap_{p<\infty} \calP_p(E)$, and any $\delta, C > 0$, the set
\[
    \calQ := \left\{ q^x \in \calP_1(E) ~\middle|~ q^x \ll \frak{q}^x,~ \int_{E} \abs{\frac{\dif q^{x}}{\dif \frak{q}^x}}^{1+\delta} \dif \frak{q}^x \leq C \right\}
\]
is compact in $\calP_1(E)$ for any $\delta, C > 0$.
\end{lemma}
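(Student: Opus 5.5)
To prove Lemma~\ref{lem:cpt-in-Banach}, we will establish relative compactness of $\calQ$ in the $\calW_1$-topology and then check that $\calQ$ is closed in that topology. We write $\rho := \dif q^x/\dif \frak{q}^x$ for $q^x \in \calQ$.

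\textbf{Tightness and uniform integrability.} First we show that $\calQ$ is tight. Since $\frak{q}^x$ is a single probability measure on the Polish space $E$, it is tight (if $E$ is not separable, we can replace $E$ by the closed separable support of $\frak{q}^x$, which carries every $q^x \ll \frak{q}^x$). For a Borel set $A$, Hölder's inequality with exponents $1+\delta$ and $(1+\delta)/\delta$ gives
\[
q^x(A) = \int_A \rho \,\dif \frak{q}^x \le C^{\frac{1}{1+\delta}}\, \frak{q}^x(A)^{\frac{\delta}{1+\delta}}.
\]
Hence the compact sets $K_\varepsilon$ with $\frak{q}^x(K_\varepsilon^c)$ small give $\sup_{q^x \in \calQ} q^x(K_\varepsilon^c)$ small, and $\calQ$ is tight. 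By Prokhorov, $\calQ$ is weakly relatively compact.

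For uniform integrability of first moments, Hölder again gives
\[
\int \norm{x}\mathbf 1_{\{\norm{x}>R\}}\,\dif q^x \le C^{\frac1{1+\delta}}\Big(\int \norm{x}^{\frac{1+\delta}{\delta}}\mathbf 1_{\{\norm{x}>R\}}\,\dif\frak{q}^x\Big)^{\frac{\delta}{1+\delta}} \to 0 \quad (R\to\infty).
\]
The right-hand side tends to zero because $\frak{q}^x \in \bigcap_p \calP_p(E)$. Weak relative compactness together with uniform integrability of first moments then yields relative compactness in $\calP_1(E)$, as cited before Lemma~\ref{lem:general-W1-cpt} (\cite[Corollary 5.6]{Carmona2018}).

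\textbf{Closedness.} This is the step requiring care. Let $q^x_n \in \calQ$ with $q^x_n \to q^x$ in $\calW_1$. The densities $\rho_n$ are bounded in $L^{1+\delta}(\frak{q}^x)$, which is reflexive, so along a subsequence $\rho_n \rightharpoonup \rho$ weakly in $L^{1+\delta}(\frak{q}^x)$. By weak lower semicontinuity of the norm, $\int \rho^{1+\delta}\,\dif\frak{q}^x \le C$. Moreover $\rho \ge 0$ and $\int \rho\,\dif\frak{q}^x = 1$, since the constant $1$ lies in the dual space $L^{(1+\delta)/\delta}$.

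It remains to identify $\rho\,\dif\frak{q}^x$ with $q^x$. For bounded continuous $\varphi$ we have $\varphi \in L^{(1+\delta)/\delta}(\frak{q}^x)$, so
\[
\int \varphi\,\dif q^x_n = \int \varphi\rho_n\,\dif\frak{q}^x \to \int\varphi\rho\,\dif\frak{q}^x.
\]
Weak convergence, implied by $\calW_1$ convergence, also gives $\int \varphi\,\dif q^x_n \to \int \varphi\,\dif q^x$. Bounded continuous functions determine measures, so $q^x = \rho\,\frak{q}^x$ and hence $q^x \in \calQ$. Combining relative compactness with closedness shows that $\calQ$ is compact in $\calP_1(E)$.

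The main point to guard is this identification of the limit, which rests on reflexivity of $L^{1+\delta}$. This is exactly the content of \cite[Proposition~7.8]{carmona_lacker_2015}, which we can invoke directly for the weak-topology part, adding only the uniform-integrability estimate above to upgrade to $\calW_1$.
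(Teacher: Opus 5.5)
Your proposal is correct and follows the paper's route: weak relative compactness plus a H\"older uniform-integrability estimate against the moments of $\mathfrak{q}^x$ give $\calW_1$-relative compactness, and closedness comes from weak closedness of the density constraint. The only difference is that you prove the weak compactness and closedness yourself, via tightness from H\"older and identification of the limit density by reflexivity of $L^{1+\delta}(\mathfrak{q}^x)$, whereas the paper simply cites \cite[Proposition~7.8]{carmona_lacker_2015} for that step.
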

\begin{proof}
By Proposition 7.8 in \cite{carmona_lacker_2015}, the set $\calQ$ is weakly compact in $\calP(E)$.
Since for some $q > 1$,
\begin{align*}
    \sup_{q^x \in \calQ} \int_{E} \norm{x}_E^2 \dif q^x(x)
    &\leq \left(\int_{E} \norm{x}_E^{2q} \dif \frak{q}^x(x)\right)^{\frac1{q}}
    \sup_{q^x \in \calQ} \left( \int_{E} \abs{\frac{\dif q^x}{\dif \frak{q}^x}}^{1+\delta} \dif \frak{q}^x \right)^{\frac1{1+\delta}}
    < \infty,
\end{align*}
the set $\calQ$ is uniformly integrable and thus $\calW_1$-relatively compact.
Since  $\calW_1$-convergence implies  weak convergence and since $\calQ$ is weakly closed, it is also $\calW_1$-closed and hence $\calW_1$-compact.
\end{proof}

The next theorem proves a compactness condition for integrable Young measures. 

\begin{theorem}\label{thm:general-cpt}
Let $I$ be an index set and assume that the family of integrable measure-flows $\{q^i\}_{i \in I} \subset \calM_1$ satisfies, for some $\delta, C > 0$ and a reference measure $\frak{q}^x \in \bigcap_{p<\infty} \calP_p(E)$, that
\[
    q_t^{i,x} \ll \frak{q}^x \quad \text{for all}~i \in I,~ t \in [0,T],
\]
and
\[
    \sup_{i \in I} \int_{E} \abs{\frac{\dif q_t^{i,x}}{\dif \frak{q}^x}}^{1+\delta} \dif \frak{q}^x
    + \sup_{i \in I} \int_0^T \int_{\RR^d} \abs{w}^{1+\delta} q_t^{i,z}(\dif w) \dif t
    \leq C.
\]
Then, the family of integrable Young measures $\{\boldsym{\nu}^i\}_{i \in I} := \{\delta_{q_t^i}(\dif q) \dif t\}_{i \in I}$ is relatively compact w.r.t.~$\calS_1(\calP_1(E \times \RR^d))$ and so is their convex hull.
\end{theorem}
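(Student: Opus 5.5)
The plan is to reduce relative compactness in the stable topology $\calS_1(\calP_1(E \times \RR^d))$ to two ingredients, following the criteria for integrable Young measures recalled in Appendix~\ref{appx:Young-measure}: (a) tightness in the stable sense, i.e.\ existence of a compact set $\frakK_\varepsilon \subset \calP_1(E\times\RR^d)$ such that $\sup_i \boldsym{\nu}^i([0,T]\times \frakK_\varepsilon^c)\le\varepsilon$; and (b) uniform integrability of the first-moment function $(t,q)\mapsto M_1(q)=\calW_1(\delta_0,q)$ with respect to the family $\{\boldsym{\nu}^i\}$. This is the integrable analogue of Prohorov's theorem for Young measures: (a) gives relative compactness in the (metrizable) stable topology $\calS$, and (b) upgrades convergence to $\calS_1$ via Proposition~\ref{prop:Portmanteau-thm-intYoung}.

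For (a), I would fix $\varepsilon>0$ and take $\calQ$ as in Lemma~\ref{lem:cpt-in-Banach} with the given $\delta, C$; it is compact in $\calP_1(E)$ and contains all $q_t^{i,x}$. For the control marginal, set $\frakK_R := \{ \rho \in \calP_1(\RR^d) : \int \abs{w}^{1+\delta}\rho(\dif w)\le R\}$, which is relatively $\calW_1$-compact (tight, and the $(1+\delta)$-moment gives uniform integrability of $\abs{w}$). By Lemma~\ref{lem:general-W1-cpt}, $\bar\frakK_R := \{q : q^x\in\calQ, q^z\in\frakK_R\}$ is relatively compact in $\calP_1(E\times\RR^d)$. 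By Markov's inequality in $t$, the Lebesgue measure of $\{t : \int\abs{w}^{1+\delta}q_t^{i,z}(\dif w) > R\}$ is at most $C/R$ uniformly in $i$, so $\boldsym{\nu}^i([0,T]\times\overline{\bar\frakK_R}^c)\le C/R$, which is (a).

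For (b), I would bound $M_1(q_t^i) \le \int\norm{x}\,q_t^{i,x}(\dif x) + \int\abs{w}\,q_t^{i,z}(\dif w)$. The first term is bounded uniformly in $(i,t)$ by the Hölder argument in the proof of Lemma~\ref{lem:cpt-in-Banach} (using $\frak q^x\in\bigcap_p\calP_p$), hence trivially uniformly integrable on $[0,T]$. The second term $\phi_i(t):=\int\abs{w}q_t^{i,z}(\dif w)$ satisfies by Jensen $\phi_i(t)^{1+\delta}\le\int\abs{w}^{1+\delta}q^{i,z}_t(\dif w)$, so $\sup_i\int_0^T\phi_i^{1+\delta}\dif t\le C$, giving uniform integrability by de la Vallée-Poussin. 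Thus $\int_0^T\!\int M_1(q)\mathbf 1_{\{M_1(q)>R\}}\boldsym{\nu}^i(\dif q,\dif t)\to0$ uniformly in $i$.

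Finally, for the convex hull: both conditions are preserved under convex combinations, since $\boldsym{\nu}\mapsto\boldsym{\nu}([0,T]\times \frakK^c)$ and $\boldsym{\nu}\mapsto \int M_1\mathbf 1_{\{M_1>R\}}\dif\boldsym{\nu}$ are affine, so the same bounds hold for every element of $\operatorname{conv}\{\boldsym{\nu}^i\}$ and the same criterion applies. The main obstacle I expect is the precise form of the compactness criterion in $\calS_1$ (tightness plus uniform integrability of $M_1$ implying $\calS_1$-relative compactness, not merely $\calS$-compactness); I would verify it by extracting an $\calS$-convergent subsequence (metrizability, Proposition~\ref{prop:spaceY-metrizable}) and then using uniform integrability to pass to the limit in integrals of test functions of linear growth in $M_1(q)$, as required by Proposition~\ref{prop:Portmanteau-thm-intYoung}.
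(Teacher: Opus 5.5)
Your proposal is correct and is essentially the paper's proof. The ingredients match: tightness via a compact set assembled from $\calQ$ (Lemma~\ref{lem:cpt-in-Banach}) and a $(1+\delta)$-moment-controlled subset of $\calP_1(\RR^d)$, combined through Lemma~\ref{lem:general-W1-cpt} and a Markov bound in $t$; uniform integrability of $M_1(q_t^i)$ from the $(1+\delta)$-moments; and the $\calS_1$-compactness criterion you flagged as the main obstacle, which is precisely Proposition~\ref{prop:cpt-ible-Young-meas} and can simply be cited. Your small deviations are valid: you take the closure of $\bar{\frakK}_R$ rather than proving closedness of the paper's tail-defined set $\bar{\frakK}(D)$, and you observe that both bounds are affine in $\boldsym{\nu}$ and so pass to the convex hull, a step the paper's proof leaves implicit.
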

\begin{proof}
Due to Proposition~\ref{prop:tightness}, the family of Young measures $\{\boldsym{\nu}^i\}_{i \in I}$ is tight in the sense of Definition~\ref{def:tightness-Young} if there exists a compact set $\bar{\frakK}_\epsilon \subset \calP_1 (E \times \RR^d)$ for every $\epsilon > 0$ such that
\begin{equation}\label{tightness-of-frakK}
\sup_{i \in I} \lambda(\{t \in [0,T] \mid q_t^i \in \calP_1 (E \times \RR^d) \setminus \bar{\frakK}_\epsilon\}) \leq \epsilon.
\end{equation}
Moreover, by Proposition~\ref{prop:Prokhorov} tightness implies relative compactness w.r.t.~$\calS(\calP_1(E \times \RR^d))$. 

To find a compact set $\bar{\frakK}_\epsilon$ that satisfies condition \eqref{tightness-of-frakK}, we define a subset $\calQ \subset \calP_1(E)$ and parametrized subsets $\frakK(D) \subset \calP_1(\RR^d)$ for each $D>0$ by
\begin{align*}
    \calQ &:= \left\{ q^x \in \calP_1(E) ~\middle|~ q^x \ll \frak{q}^x,~ \int_{E} \abs{\frac{\dif q^{x}}{\dif \frak{q}^x}}^{1+\delta} \dif \frak{q}^x \leq C \right\} \\
    \frakK(D) &:= \left\{q^z \in \calP_1 (\RR^d) ~\middle|~ \int_{\RR^d} \abs{w} \mathbf{1}_{\{\abs{w} > K\}} \dif q^z (w) \leq \frac{D}{K^\delta} ~ \text{for all}~ K > 0 \right\},
\end{align*}
and let
\[
\bar{\frakK}(D) := \{ q \in \calP_1(E \times \RR^d) \mid q^x \in \calQ,~ q^z \in \frakK(D) \}.
\]

\textsl{Step 1.} We first prove that $\bar{\frakK}(D) \subset \calP_1(E \times \RR^d)$ is compact.
Relative compactness follows from Lemma~\ref{lem:general-W1-cpt} if we can prove that $\calQ$ and $\frakK(D)$ are relatively compact. By Lemma~\ref{lem:cpt-in-Banach}, the set $\calQ$ is relatively $\calW_1$-compact. Moreover, by definition, the set $\frakK(D)$ is tight and uniformly integrable, and hence also relatively $\calW_1$-compact. To see that $\bar{\frakK}(D)$ is closed, let us take a sequence $(q_n)_{n=1}^\infty \subset \bar{\frakK}(D)$ that converges to $q_\infty \in \calP_1 (E \times \RR^d)$. The compactness of $\calQ$ implies $q_\infty^x \in \calQ$ since $q_n^x \to q_\infty^x$ in $\calP_1(E)$. Moreover, by the Portmanteu theorem 
\[
\int_{\RR^d} \abs{w} \mathbf{1}_{\{\abs{w}>K\}} q_\infty^z (\dif w)
\leq \liminf_{n \to \infty} \int_{\RR^d} \abs{w} \mathbf{1}_{\{\abs{w} > K\}} q_n^z (\dif w)
\leq \frac{D}{\sqrt{K}},
\]
because the indicator function $\mathbf{1}_{\{\abs{w} > K\}}$ is lower semicontinuous. Thus, $q_\infty$ belongs to $\bar{\frakK}(D)$. 

\mbox{}

\textsl{Step 2.} Now we prove the tightness of $\{\boldsym{\nu}^i\}_{i \in I}$. To this end, we fix $\epsilon > 0$ and find a constant $D_\epsilon > 0$ that satisfies the inequality \eqref{tightness-of-frakK} with $\bar{\frakK}_\epsilon = \bar{\frakK}(D_\epsilon)$. For this, we introduce the measurable subsets 
\[
E_{i,D} := \{t \in [0,T] \mid q^i_t \notin  \bar{\frakK}(D)\} \subset [0,T]
\]
in terms of which the inequality \eqref{tightness-of-frakK} can be rewritten as 
$$\sup_{i \in I} \lambda (E_{i,D}) < \epsilon.$$
If $q \in \calP_1(\RR^d)$ does not belong to $\frakK(D)$, then there exists a constant $K > 0$ such that
\begin{align*}\label{q-notin-frakK}
    \int_{\RR^d} \abs{w}^{1+\delta} \dif q(w)
    \geq K^\delta \int_{\RR^d} \abs{w} \mathbf{1}_{\{\abs{w}>K\}} \dif q(w) > D
\end{align*}
and so
\begin{equation}\label{chara-set-EiD}
\int_{\RR^d} \abs{w}^{1+\delta} \dif q_t^{i,z} (w) > D
\quad \text{for all}~t \in E_{i,D}.
\end{equation}
If for any $D > 0$ there would exist $i(D) \in I$  such that $\lambda(E_{{i(D)},D}) > \epsilon$, then for any $D > 0$ 
\begin{align*}
C &\geq \int_0^T \int_{\RR^d} \abs{w}^{1+\delta} q_t^{i(D),z} (\dif w) \dif t 
\geq \int_{E_{{i(D)},D}} \int_{\RR^d} \abs{w}^{1+\delta}  q_t^{i(D),z} (\dif w) \dif t
> \epsilon D,
\end{align*}
which is impossible. Thus, there exists $D_\epsilon > 0$ for any $\epsilon>0$ that satisfies \eqref{tightness-of-frakK} with $\bar{\frakK}_\epsilon = \bar{\frakK}(D_\epsilon)$. 

\mbox{}

\textsl{Step 3.}
By Proposition~\ref{prop:cpt-ible-Young-meas}, it remains to verify uniform integrability of the family $\{\boldsym{\nu}^i\}_{i \in I}$.
This follows from 
\begin{align*}
\int_0^T \calW_1 (\delta_0, q^i_t) &\mathbf{1}_{\{\calW_1 (\delta_0, q^i_t) > K\}} \dif t \\
&\leq \frac1{K^{\delta}} \int_0^T \calW_1 (\delta_0, q_t^i)^{1+\delta} \dif t  \\
&\leq \frac{2^\delta}{K^{\delta}} \left[\int_0^T \int_{E} \norm{x}_E^{1+\delta} q_t^{i,x} (\dif x) \dif t + \int_0^T \int_{\RR^d} \abs{w}^{1+\delta} q_t^{i,z} (\dif w) \dif t\right] \\
&\leq \frac{2^\delta}{K^{\delta}} \left[ T M_p(\frak{q}^x) C^{\frac1{1+\delta}} + C \right],
\end{align*}
where $p>0$ in the last line is taken large enough.
\end{proof}

The following corollary considers the important special case where our integrable Young measures are defined in terms of probability measures with BMO-bounded densities.

\begin{corollary}\label{cor:cpt-laws}
Suppose we are given a random variable $X$, processes $Z^n \in \HH^2_{\operatorname{BMO}}(\PP)$, measurable functions $\theta^n$, and probability measure $\PP^i \in \calP(\Omega)$ satisfying the assumptions of Lemma~\ref{lem:uni-bdd-norm}.
Then, the families of probability laws and integrable Young measures,
\[
    \{ \calL^i(X) \}_{i \in I}, \quad
    \big\{ \delta_{\calL^i (X,Z_t^i)} (\dif q) \dif t  \big\}_{i \in I},
\]
are relatively compact w.r.t.~the $\calW_1$-topology and $\calS_1(\calP_1(E \times \RR^d))$, respectively.
\end{corollary}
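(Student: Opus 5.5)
The plan is to reduce both assertions to results already in hand: Lemma~\ref{lem:cpt-in-Banach} for the laws and Theorem~\ref{thm:general-cpt} for the Young measures. Throughout, the reference measure is $\frak{q}^x := \PP \circ X^{-1}$. Since $X \in \bigcap_{p<\infty} \LL^p(E;\PP)$, we have $\frak{q}^x \in \bigcap_{p<\infty}\calP_p(E)$.

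\emph{The laws.} By \eqref{uni-bdd-density} in Lemma~\ref{lem:uni-bdd-norm} there are $\delta>0$ and $C>0$ with
\[
\calL^i(X) \ll \frak{q}^x \quad \text{and} \quad \int_E \abs{\frac{\dif \calL^i(X)}{\dif \frak{q}^x}}^{1+\delta}\dif \frak{q}^x \le C \quad \text{for all } i \in I.
\]
So $\{\calL^i(X)\}_{i\in I}$ lies in the set $\calQ$ of Lemma~\ref{lem:cpt-in-Banach}, which is $\calW_1$-compact. Relative compactness follows.

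\emph{The Young measures.} Set $q^i_t := \calL^i(X,Z^i_t)$. The $x$-marginal is $q^{i,x}_t = \calL^i(X)$, which is independent of $t$, so the density condition of Theorem~\ref{thm:general-cpt} is exactly the bound just obtained. For the $z$-marginal it suffices to bound second moments, since $\abs{w}^{1+\delta}\le 1+\abs{w}^2$ after shrinking $\delta\le 1$ if needed. By Fubini,
\[
\int_0^T\int_{\RR^d}\abs{w}^{2}\,q^{i,z}_t(\dif w)\dif t = \EE^i\Big[\int_0^T \abs{Z^i_t}^2\dif t\Big] \le \norm{Z^i}^2_{\HH^2_{\operatorname{BMO}}(\PP^i)},
\]
and the right-hand side is bounded uniformly in $i$ by \eqref{uni-bdd-norm}. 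Two checks remain. First, $q^i\in\calM_1$: measurability of $t\mapsto q^i_t$ follows from progressive measurability of $Z^i$, and $\int_0^T M_1(q^i_t)\dif t<\infty$ follows from \eqref{uni-bdd-density2} together with the bound above. Second, we need a common $\delta$ for both conditions; we take the smaller of the two exponents. Theorem~\ref{thm:general-cpt} then gives relative compactness of $\{\delta_{q^i_t}(\dif q)\dif t\}_{i\in I}$ in $\calS_1(\calP_1(E\times\RR^d))$.

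Neither step is a real obstacle, because the heavy lifting is done by Lemma~\ref{lem:uni-bdd-norm} (the uniform reverse H\"older and BMO equivalence estimates) and by Theorem~\ref{thm:general-cpt}. The only point needing care is that the $z$-moment bound must be taken under $\PP^i$, not $\PP$. This is exactly why the $\PP^i$-BMO bound in \eqref{uni-bdd-norm} is used rather than the $\PP$-BMO assumption directly.
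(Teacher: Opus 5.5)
Your proposal is correct and follows the paper's own proof. The paper uses the same reference measure $\PP\circ X^{-1}$, obtains the law part from Lemma~\ref{lem:cpt-in-Banach} via \eqref{uni-bdd-density}, and obtains the Young-measure part from Theorem~\ref{thm:general-cpt} via \eqref{uni-bdd-density} and the $\PP^i$-BMO bound \eqref{uni-bdd-norm}. Your additional checks (that $q^i\in\calM_1$, the choice of a common exponent $\delta\le 1$, and the bound $\abs{w}^{1+\delta}\le 1+\abs{w}^2$) spell out details the paper leaves implicit.
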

\begin{proof}
The result follows from Lemma~\ref{lem:cpt-in-Banach} and Theorem~\ref{thm:general-cpt} applied to
\[
\frak{q}^x := \PP \circ X^{-1}, \quad q_t^{i,x} :\equiv \PP^i \circ X^{-1}, \quad
q_t^{i,z} := \PP^i \circ Z_t^{-1}.
\]
In fact, $q_t^{i,x} \ll \frak{q}^x$ because $\PP^i \ll \PP$.
Moreover, the assumptions of Theorem~\ref{thm:general-cpt} are satisfied by \eqref{uni-bdd-density} and \eqref{uni-bdd-norm} in Lemma~\ref{lem:uni-bdd-norm}.
\end{proof}


\section{Existence of solutions to generalized MV-BSDE}
\label{sec:existence}

In this section, we prove our main results, Theorem~\ref{thm:existence-with-bdd} and Theorem \ref{thm:relax-bdd}, and hence the existence of MFG equilibria in weak formulation.

\subsection{Proof of Theorem~\ref{thm:existence-with-bdd}}
\label{subsec:existence-bdd}



By Corollary~\ref{cor:maps-onto-itself}, the solution mapping $\Phi$ maps the convex, closed set $\calQ^\ast \times \calK^\ast$ to itself.
In view of Lemma~\ref{lem:cpt-in-Banach}, the convex set $\calQ^\ast \subset \calP_1$ is compact. Corollary~\ref{cor:cpt-laws} implies that the convex set $\calK^\ast \subset \calY_1$ is also compact. It follows from Corollary~\ref{cor:FP-continuity} that $\Phi$ is continuous on $\calQ^\ast \times \calK^\ast$. Moreover, as shown in Appendix~\ref{appx:embed-Young-meas} the space $\calP_1 \times \calY_1$ can be embedded into a locally convex Hausdorff topological space. Therefore, Schauder's fixed-point theorem is applicable to the solution mapping and yields a fixed point
\[
(\mu, \boldsym{\nu})
= (\calL^{\mu, \boldsym{\nu}}(X),
  \delta_{\calL^{\mu, \boldsym{\nu}}(X, Z_t^{\mu, \boldsym{\nu}})}(\dif q) \dif t) \in \calP_1 \times \calY_1.
\]
In particular, $\calL^{\mu,\boldsym{\nu}}(X, Z_\cdot^{\mu,\boldsym{\nu}}) \in \calM_1$. Since the disintegration $\nu$ of $\boldsym{\nu}$ is unique, 
\[
\nu_t = \delta_{\calL^{\mu, \boldsym{\nu}} (X, Z_t^{\mu, \boldsym{\nu}})} \quad 
    \mbox{for a.e. $t \in [0,T]$},
\]
and the BSDE \eqref{BSDE-mu-nu} turns into
\begin{align*}
\dif Y_t^{\mu, \boldnu} &= -H_t (X, Z_t^{\mu,\boldnu}, \calL^{\mu,\boldnu}(X,Z_s^{\mu,\boldnu})) \dif t + Z_t^{\mu,\boldnu} \dif W_t, 
\quad Y_T^{\mu,\boldnu} = G(X, \calL^{\mu,\boldnu}(X)), \\
\frac{\dif \PP^{\mu, \boldsym{\nu}}}{\dif \PP}
&= \calE\left(
   B_\cdot (X, Z_\cdot^{\mu, \boldsym{\nu}}, \calL^{\mu, \boldsym{\nu}}(X)) \cdot W
\right)_T.
\end{align*}
Hence, $(X, Y^{\mu, \boldsym{\nu}}, Z^{\mu, \boldsym{\nu}},\PP^{\mu,\boldsym{\nu}})$ is a solution to the generalized MV-BSDE \eqref{general-MVBSDE}.


\subsection{MV-BSDEs with unbounded parameters}
\label{subsec:existence-unbdd}

We now extend the previous result to the case of unbounded parameters. 
The following is a modification of Theorem 3.8 in \cite{hao2025mean}. 

\begin{proposition}\label{prop:new-apriori-bound}
Under Assumption~\ref{asm:strong-quad}, any solution $(X,Y,Z,\barPP) \in \SS^2 (\barPP) \times \SS^\infty (\barPP) \times \HH^2_{{\operatorname{BMO}}} (\barPP) \times \calP(\Omega)$ to the generalized McKean-Vlasov FBSDE \eqref{general-MVFBSDE} is bounded: 
\begin{equation}\label{eq:new-apriori-XYZ}
\norm{X}_{\SS^2(\bar{\PP})} \leq \bar{L}_x, \quad
\norm{Y}_{\SS^\infty(\bar{\PP})} \leq \bar{L}_y, \quad
\norm{Z}_{\HH^2_{{\operatorname{BMO}}} (\barPP)} \leq \bar{L}_z
\end{equation}
for some constants $\bar{L}_x, \bar{L}_y, \bar{L}_z$ that depend only on the constants $L, K_\sigma, K, \gamma, \tilde{\gamma}$ given in Assumptions~\ref{asm:BSDE-standing-asm} and \ref{asm:strong-quad}.
\end{proposition}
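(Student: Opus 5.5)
We work entirely under $\barPP$, where the system is the MV-FBSDE \eqref{general-MVFBSDE} driven by the Brownian motion $\bar W$. We treat the case $F_t(x,z,q)\le -\frac{\tilde\gamma}{2}|z|^2+F_t(x,0,q)$; the other sign is symmetric after replacing $Y$ by $-Y$. The strategy, adapted from \cite[Theorem 3.8]{hao2025mean}, is to bound in turn three scalar quantities: $\sup_t M_1(\barcalL(X_t))$, $\sup_t\bar\EE[|Y_t|]$ and $\int_0^T M_1(\barcalL(Z_t))\,\dif t$. Once these are controlled, the mean-field arguments in $F,G,B$ are bounded by Assumption~\ref{asm:BSDE-standing-asm}. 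The equation then becomes a standard quadratic BSDE with bounded data, and the classical exponential-transform estimates give the $\SS^\infty$ and BMO bounds.

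\emph{Step 1: the $Z$-moment in terms of the $X$-moment.} Take $\bar\EE$ in the backward equation between $0$ and $T$, using that $Z\in\HH^2_{\operatorname{BMO}}(\barPP)$ makes $\int Z\,\dif\bar W$ a true martingale. The strict quadratic growth in Assumption~\ref{asm:strong-quad}\eqref{cond:strong-quad} gives
\[
\frac{\tilde\gamma}{2}\bar\EE\Big[\int_0^T|Z_s|^2\dif s\Big]\le \bar\EE[G(X,\barcalL(X))]-Y_0+\bar\EE\Big[\int_0^T F_s(X,0,\barcalL(X,Z_s))\dif s\Big].
\]
The right-hand side is bounded by $C(1+\sup_t\bar\EE|X_t^*|+\int_0^T \bar\EE|Z_s|\dif s)$. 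For $Y_0$ we also need a lower bound of linear type. It comes from the same identity on $[0,T]$, with $F\le F(\cdot,0,\cdot)$ and the deterministic value $Y_0$ written as $\bar\EE[Y_0]$. Young's inequality on $\bar\EE|Z|\le \epsilon\bar\EE|Z|^2+C_\epsilon$ then yields
\[
\bar\EE\Big[\int_0^T|Z_s|^2\dif s\Big]\le C\Big(1+\sup_t\bar\EE[\|X\|_{\infty,t}]\Big).
\]
In case (a) of Assumption~\ref{asm:strong-quad}\eqref{cond:relax-truncation}, $F(\cdot,0,q)$ does not depend on $q^x$. The terminal term $G$ is bounded in $x$ and linear in $M_1(q^x)$, so $X$ enters only through $G$. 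We then need Step 2 to close the loop, and the bound on $X$ in that case must come differently, see below.

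\emph{Step 2: the $X$-moment.} In case (b), $|(\sigma B)_t|\le\gamma(1+|Z_t|+M_1(\barcalL(X)))$. Combined with BDG and the Lipschitz continuity of $\sigma$ from Assumption~\ref{asm:driftless-SDE}, this gives
\[
\bar\EE\big[\sup_{s\le t}|X_s|^2\big]\le C\Big(1+\bar\EE\Big[\int_0^t|Z_s|^2\dif s\Big]+\int_0^t\bar\EE\big[\sup_{r\le s}|X_r|^2\big]\dif s\Big).
\]
Inserting Step 1 creates a circular estimate with a non-small constant, so we cannot close it directly. This is the main obstacle. Following Hao et al., we plan to run Step 1 on subintervals $[t,T]$, using conditional expectations and the $\SS^\infty$ bound on $Y$, which yields a Gronwall-type inequality in $t$ and not a global one. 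Concretely, $\bar\EE\int_t^T|Z|^2$ is controlled by $1+\sup_{s}\bar\EE|X_s|$ linearly. Here the key point is that $X$ enters only linearly through $M_1$ while the left side is quadratic in $\|X\|$, so Young's inequality absorbs it. In case (a), $B$ is only of linear growth in $M_1(q^x)$ through Assumption~\ref{asm:BSDE-standing-asm}(1) and the Lipschitz bound in $z$, and $F$ does not see $q^x$. Since $G$ is linear in $M_1$ and $\sqrt{\cdot}$ versus linear growth again lets us absorb, the same scheme applies.

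\emph{Step 3: the uniform bounds.} With the three scalar moments bounded by constants depending only on $L,K_\sigma,K,\gamma,\tilde\gamma$, the terminal value and $F(\cdot,0,\cdot)$ are bounded by a constant $L'$. Consider then the quadratic BSDE under $\barPP$ with driver $F$, where $F$ satisfies $|F(z)|\le L'+C|z|^2$ by the local Lipschitz condition. The standard argument of Briand--Hu / Kobylanski applies: apply Itô to $e^{\beta Y}$ and $e^{-\beta Y}$ to get $\|Y\|_{\SS^\infty(\barPP)}\le\bar L_y$. Then apply Itô to $e^{\beta Y}$ between a stopping time and $T$, taking conditional expectations, to get $\|Z\|_{\HH^2_{\operatorname{BMO}}(\barPP)}\le\bar L_z$. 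Finally, $\bar L_x$ follows from Step 2.
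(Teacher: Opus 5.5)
Your overall architecture matches the paper's: control $X$ by $\bar\EE\int_0^T\abs{Z_s}^2\dif s$ via Gronwall, control $Y$ and $Z$ linearly by the first moment of $X$, absorb, then derive the BMO bound. However, Step 1 has a genuine gap at the lower bound for $Y_0$.

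\textbf{The gap in Step 1.} The identity $Y_0=\bar\EE[G(X,\barcalL(X))]+\bar\EE[\int_0^T F_s(X,Z_s,\barcalL(X,Z_s))\dif s]$ combined with $F\le F(\cdot,0,\cdot)$ bounds $Y_0$ only from \emph{above}. Your inequality contains $-Y_0$, so it needs a bound from \emph{below}. From the same identity, such a bound requires a lower bound on $F$, and the only one available is the local Lipschitz bound $F_t(x,z,q)\ge F_t(x,0,q)-K(1+\abs{z})\abs{z}$. Inserting it gives $\frac{\tilde\gamma}{2}\bar\EE\int_0^T\abs{Z_s}^2\dif s\le K\bar\EE\int_0^T(\abs{Z_s}+\abs{Z_s}^2)\dif s$. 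This is vacuous, because the strict quadratic condition and the local Lipschitz condition together force $K\ge\tilde\gamma/2$.

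\textbf{The missing idea.} The exponential transform you reserve for Step 3 must be used \emph{first}, while the moments are still uncontrolled. Since $G$ and $F(\cdot,0,\cdot)$ are bounded in $x$ by $L(1+M_1(\cdot))$, It\^o--Tanaka for $\Psi_t(\abs{Y_t})$ (or It\^o for $e^{-\beta Y}$ with $\beta$ of order $K$) gives, with $X_T^*:=\sup_{t\le T}\abs{X_t}$,
\[
\norm{Y}_{\SS^\infty(\barPP)}\le C\big(1+\bar\EE[X_T^*]\big)+C\int_0^T\bar\EE[\abs{Z_s}]\dif s .
\]
This bound is linear in first moments. Plugging it into your strict-quadratic identity and applying Young's inequality to $\int_0^T\bar\EE\abs{Z_s}\dif s$ yields your Step 1 conclusion. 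This is exactly the paper's route, \eqref{bound-Y-by-XYZ}--\eqref{bound-Z-by-XY}. The ``$\SS^\infty$ bound on $Y$'' you invoke in Step 2 is this estimate, and it is not yet available at that point in your argument.

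\textbf{Smaller points.} Once this is repaired, no subinterval or Gronwall-in-$t$ argument is needed; contrary to your remark, the loop closes directly. Indeed, Step 1 makes $\bar\EE\int_0^T\abs{Z_s}^2\dif s$ linear in $\bar\EE[X_T^*]\le(\bar\EE[\abs{X_T^*}^2])^{1/2}$. Your Gronwall estimate then gives $\bar\EE[\abs{X_T^*}^2]\le C+\frac12\bar\EE[\abs{X_T^*}^2]$, as in the paper.

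Your BMO step via $e^{\beta Y}$ on $[\tau,T]$ is a valid alternative to the paper's conditioning of the strict-quadratic inequality at a stopping time. One small correction there: $F(\cdot,0,q_t)$ is bounded only by the time-integrable function $L(1+M_1(q_t))$, not by a constant, but this is harmless.

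Finally, your claim that ``the same scheme applies'' under (1a) is unsupported. Your $X$-estimate uses linear growth of $\sigma B$, which only (1b) provides. Under (1a) only $B$ is controlled, and $\sigma$ grows linearly in $x$. The drift of $X$ then contains products such as $\norm{X}_\infty\abs{Z}$ (cf.\ Example~\ref{eg:bounded-in-qx}), and the Gronwall step breaks down. The paper does not run this scheme in case (1a); it appeals to Theorem~3.8 of \cite{hao2025mean} instead.
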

\begin{proof}
If Condition (1a) of Assumption~\ref{asm:strong-quad} is satisfied, the result follows from Theorem 3.8 in \cite{hao2025mean}. In what follows we outline the key arguments under Condition (1b).

\mbox{}

\textsl{Step 1.}
The dynamics of $X$ under the probability measure $\barPP$ is given by
\[
\dif X_t
= \sigma_t (X) B_t (X, Z_t, \barcalL(X)) \dif t
  + \sigma_t (X) \dif \bar{W}_t.
\]
We set $X_t^\ast := \sup_{s \in [0,t]} \abs{X_s}$.
From condition (1b) of Assumption~\ref{asm:strong-quad}, we have that 
\begin{align*}
\bar{\EE}[ \abs{X_s^\ast}^2]
&\leq 3 \left[
  \abs{x_0}^2
  + 3T \gamma^2 \int_0^t \left(
    1 + \bar{\EE}[ \abs{X_s^\ast}^2 ]
    + \bar{\EE}[\abs{Z_s}^2]
  \right) \dif s
  + 4 \bar{\EE}\left[\int_0^t \abs{\sigma_s (X)}^2 \dif s \right] \right] \\
&\leq C \left( 1 + \int_0^t \bar{\EE}[ \abs{X_s^\ast}^2 ] \dif s + \bar{\EE} \left[ \int_0^t \abs{Z_s}^2 \dif s \right] \right)
\end{align*}
for some $C>0$, and an application of Gronwall's inequality yields for some $L_1>0$
\begin{equation}\label{control-X-by-Z}
\bar{\EE}[ \abs{X_T^\ast}^2 ]
\leq L_1 \left(1
  + \bar{\EE}\left[\int_0^T \abs{Z_s}^2 \dif s \right] \right).
\end{equation}

\mbox{ } 

\textsl{Step 2.}
Analogously to the proof of Theorem 3.8 in \cite{hao2025mean}, we prove that the norms of the solutions are controlled by the expectation of $X$ as
\begin{equation}\label{bound-by-EXP-X}
\begin{aligned}
\norm{Y}_{\SS^\infty_{[t,T]} (\barPP)}
\leq L_2 \left( 1 + \bar{\EE}\left[ \abs{X_T^\ast} \right] \right), \quad
\bar{\EE} \left[\int_0^T \abs{Z_s}^2 \dif s \right]
\leq L_2 \left( 1 + \bar{\EE}\left[ \abs{X_T^\ast} \right] \right),
\end{aligned}
\end{equation}
for some positive constants $L_2$. Let
\[
\Psi_t (y)
:= \exp\left\{
  4K y
  + 4K \int_0^t 
    \left( 2K + \gamma \left( 1 + \bar{\EE}\left[ \abs{X_s^\ast} \right]
    + \bar{\EE}[\abs{Z_s}] \right) \right) \dif s
\right\}.
\]
An application of the It\^o-Tanaka formula to $\Psi_t(\abs{Y}_t)$ yields for some $C>0$ \footnote{We refer to \cite{hao2025mean} for the detailed computation.}
\begin{equation}\label{bound-Y-by-XYZ}
\begin{split}
\norm{Y}_{\SS^\infty_{[t,T]} (\barPP)}
\leq C \left( 1 + \bar{\EE}\left[ \abs{X_T^\ast} \right]  \right) + \gamma \int_t^T \bar{\EE}[\abs{Z_s}] \dif s.
\end{split}
\end{equation}

The strictly quadratic growth condition allows us to eliminate the integral of the $Z$-process from the above equation. In fact, by condition \eqref{cond:strong-quad} in Assumption~\ref{asm:strong-quad},
\begin{equation}\label{Z-without-EXP}
\begin{split}
\tilde{\gamma}  &\int_{t}^{T} \abs{Z_s}^2 \dif s \leq Y_T - Y_t
  + \int_{t}^{T} \gamma \left( 1 + \bar{\EE}\left[ \abs{X_s^\ast} \right] + \bar{\EE}[\abs{Z_s}] \right) \dif s - \int_t^T Z_s \dif \bar{W}_s.
\end{split}
\end{equation}
By Young's inequality, 
\begin{equation}\label{Young-ineq-EZ}
\gamma \int_t^T \bar{\EE}[\abs{Z_s}] \dif s \leq \frac{\epsilon \tilde{\gamma}}2 \int_t^T \bar{\EE}[\abs{Z_s}^2] \dif s + \frac{T \gamma^2}{2 \epsilon \tilde{\gamma}}
\end{equation}
for any $\epsilon > 0$. Taking expectations in \eqref{Z-without-EXP} and applying the above inequality with $\epsilon = 1$ yields 
\begin{equation}\label{bound-Z-by-XY}
\begin{split}
\frac{\tilde{\gamma}}2 \bar{\EE}\left[ \int_t^T \abs{Z_s}^2 \dif s \right]
&\leq 2\norm{Y}_{\SS^\infty_{[t,T]} (\barPP)} + \frac{T \gamma^2}{2 \tilde{\gamma}} + T \gamma \left( 1 +  \bar{\EE}\left[ \abs{X_T^\ast} \right] \right) .
\end{split}
\end{equation}

Therefore, the first estimate in \eqref{bound-by-EXP-X} follows by plugging \eqref{bound-Z-by-XY} into \eqref{Young-ineq-EZ} and then \eqref{Young-ineq-EZ} into \eqref{bound-Y-by-XYZ} with sufficiently small $\epsilon$.
The second inequality follows from the first one and \eqref{bound-Z-by-XY}. 

\mbox{ } 

\textsl{Step 3.} We now establish the desired bounds on $(X,Y,Z)$.
Plugging \eqref{bound-by-EXP-X} into \eqref{control-X-by-Z}, 
\begin{align*}
\bar{\EE}[\abs{X_T^*}^2]
&\leq L_1 \big( 1 + L_2 (1 + \bar{\EE}[\abs{X_T^\ast}])  \big)
\le C + \frac12 \bar{\EE}[\abs{X_T^*}^2]
\end{align*}
for some $C>0$.
This result yields the a priori bound for $X$ in \eqref{eq:new-apriori-XYZ}. 
The estimate for $Y$ in \eqref{eq:new-apriori-XYZ} follows from that for $X$ and \eqref{bound-by-EXP-X}.
Taking the conditional expectation of \eqref{Z-without-EXP}, we get
\begin{align*}
\tilde{\gamma} &\bar{\EE}\left[ \int_\tau^T \abs{Z_s}^2 \dif s ~\middle|~ \calF_\tau \right] 
\leq 2 \norm{Y}_{\SS^\infty_{[t,T]} (\barPP)} + \frac{T \gamma^2}{2 \tilde{\gamma}}
+ T \gamma \left( 1 +  \bar{\EE}\left[ \abs{X_T^\ast} \right] \right)
+ \frac{\tilde{\gamma}}2 \int_0^T \bar{\EE}[\abs{Z_s}^2] \dif s
\end{align*}
for any $\FF$-stopping time $\tau \ge 0$.
Plugging \eqref{bound-by-EXP-X} into the above inequality, we conclude that
\begin{align*}
&\bar{\EE}\left[ \int_\tau^T \abs{Z_s}^2 \dif s ~\middle|~ \calF_\tau \right]
\leq C (1 + \bar{L}_x + \bar{L}_y).
\end{align*}
\end{proof}

The preceding result allows us to establish the desired existence result using a truncation argument. To this end, we choose continuous cut-off functions $c_N = (c_N^x, c_N^z)$ that satisfy
\begin{equation}\label{cutoff-functions}
    \norm{c_N(x,z) - (x,z)} \leq \norm{(x,z)} \boldsym{1}_{\{\norm{(x,z)} \geq N\}} \quad
    \mbox{and} \quad
    \norm{c_N(x,z)} \leq N \wedge \norm{(x,z)},
\end{equation}
and define the corresponding truncated functions $G^N, B^N, F^N$, and $H^N$ by
\begin{equation*}\label{N-truncated-functions}
\begin{split}
    G^N (x,q^x) & := G(x, q^x \circ (c_N^x)^{-1}), \quad 
    B_t^N (x,z,q^x) := B_t(x, z, q^x \circ (c_N^x)^{-1}), \\
    F_t^N (x,z,q) & := F_t(x,z,q \circ c_N^{-1}), \quad H_t^N(x,y,z,q) := F_t^N(x,y,z,q) + B_t^N(x,z,q^x) \cdot z
\end{split}
\end{equation*}
By Theorem \ref{thm:existence-with-bdd} the respective MV-BSDEs admit solutions 
\begin{equation} \label{N-solution}
  (X,Y^N,Z^N,\PP^N) \in \SS^2(\PP) \times \SS^\infty(\PP) \times \HH_{\operatorname{BMO}}^2(\PP) \times \calP(\Omega).  
\end{equation}

The next lemma provides an auxiliary convergence result for the cut-off functions from which we subsequently conclude that the solutions to the ``truncated'' MV-BSDEs converge to a solution of the original quadratic MV-BSDE. The assertion follows from \eqref{cutoff-functions}.

\begin{lemma}\label{lem:preliminary-convergence-cN}
    Let $t \mapsto q_t^N$ be  $\calP_2(\calC_d \times \RR^d)$-valued Borel measurable maps on $[0,T]$. If
    \begin{equation*}
        \sup_{N \in \NN} \int_0^T \int_{\calC_d \times \RR^d} \norm{(x,z)}^2 \dif q_t^N (x,z) \dif t < \infty,
    \end{equation*}
    then any continuous cut-off function $c_N : \calC_d \times \RR^d \to \calC_d \times \RR^d$ that satisfies \eqref{cutoff-functions} also satisfies 
    \begin{equation*}
        \int_0^T \calW_1 \big( q_t^N \circ c_N^{-1}, q_t^N \big) \dif t \to 0. 
    \end{equation*}
\end{lemma}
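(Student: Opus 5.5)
The natural route is to bound the Wasserstein distance by an explicit coupling. For each $t$, the pair $((x,z), c_N(x,z))$ with $(x,z)\sim q_t^N$ is a coupling of $q_t^N$ and $q_t^N\circ c_N^{-1}$. Plugging this coupling into the definition of $\calW_1$ and using the first property in \eqref{cutoff-functions} gives
\[
\calW_1\big(q_t^N\circ c_N^{-1}, q_t^N\big) \le \int_{\calC_d\times\RR^d} \norm{c_N(x,z)-(x,z)}\, \dif q_t^N(x,z) \le \int_{\calC_d\times\RR^d} \norm{(x,z)}\boldsym{1}_{\{\norm{(x,z)}\ge N\}}\, \dif q_t^N(x,z).
\]
Both measures lie in $\calP_1$, since $\norm{c_N(x,z)}\le\norm{(x,z)}$ by the second property. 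Measurability of $t\mapsto \calW_1(q^N_t\circ c_N^{-1},q^N_t)$ follows from continuity of $c_N$: the map $q\mapsto q\circ c_N^{-1}$ is continuous on $\calP_1$, and $t\mapsto q_t^N$ is Borel.

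Next, I use the Markov-type bound $\norm{(x,z)}\boldsym{1}_{\{\norm{(x,z)}\ge N\}}\le \norm{(x,z)}^2/N$. Integrating over $t\in[0,T]$ then yields
\[
\int_0^T \calW_1\big(q_t^N\circ c_N^{-1}, q_t^N\big)\,\dif t \le \frac1N \sup_{M\in\NN}\int_0^T\int_{\calC_d\times\RR^d}\norm{(x,z)}^2\,\dif q_t^M(x,z)\,\dif t.
\]
By the uniform second-moment hypothesis the supremum is finite, so the right-hand side is of order $C/N$ and tends to $0$.

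There is no real obstacle here. The only points needing care are that the coupling bound is valid and that the measurability of the integrand holds, both of which are routine. The key observation is that the uniform square integrability makes the tails vanish uniformly in $N$, even though the measures $q^N$ themselves move with $N$.
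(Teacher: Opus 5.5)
Your proof is correct and is essentially the argument the paper has in mind; the paper only says the claim follows from \eqref{cutoff-functions}. You couple $q_t^N$ with $q_t^N\circ c_N^{-1}$ via $(x,z)\mapsto((x,z),c_N(x,z))$, apply the first cut-off property, and bound the tail by $\norm{(x,z)}^2/N$, which is the same estimate as in \eqref{controlled-by-second-moment} and gives an explicit $O(1/N)$ rate.
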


\bigskip

\textsc{Proof of Theorem~\ref{thm:relax-bdd}}.
If $G, B$ and $F$ satisfy Assumption~\ref{asm:strong-quad} for some constants $\gamma, \gamma_F, \tilde{\gamma}_F$, then so do $G^N, B^N $ and $F^N$. Hence, Proposition~\ref{prop:new-apriori-bound} yields bounds $\bar{L}_x, \bar{L}_z > 0$ s.t.
\begin{equation}\label{eq:uniform-bound-XZN}
\sup_{N \in \NN} \norm{X}_{\SS^2(\PP^N)} \leq \bar{L}_x, \quad
\sup_{N \in \NN}\norm{Z^N}_{\HH^2_{{\operatorname{BMO}}} (\PP^N)} \leq \bar{L}_z 
\end{equation}
and hence 
\begin{equation}\label{uniform-bound-BN}
    \sup_{N \in \NN} \abs{B_\cdot^N(X, 0, \calL^N(X))}^2
    \leq C ( 1  + \bar{L}_x) < \infty.
\end{equation}
In particular, $\sup_{N \in \NN} \norm{B_\cdot^N(X, Z_\cdot^N, \calL^N(X))}_{\HH_{\operatorname{BMO}}^2(\PP^N)} < \infty$. Thus, by Proposition~\ref{prop:equiv-sol-P-barP},
\begin{equation*}
\begin{aligned}
    \sup_{N \in \NN} \norm{Z^N}_{\HH_{\operatorname{BMO}}^2(\PP)} < \infty.
\end{aligned}
\end{equation*}


Let $\mu^N := \calL^N(X), \quad q^N_t := \calL^N(X,Z_t^N), \quad \boldsym{\nu}^N (\dif q, \dif t) := \delta_{q_t^N}(\dif q) \dif t.$
By Corollary~\ref{cor:cpt-laws} we may without loss of generality assume that 
\begin{equation}\label{barnu-to-limit}
    \mu^N \to \mu^* \in \calP_1 \quad \mbox{and} \quad \boldsym{\nu}^N \to \boldsym{\nu}^* \in \calY_1.
\end{equation}
Furthermore, setting $\bar{q}_t^N := q_t^N \circ c_N^{-1}$ Lemma~\ref{lem:preliminary-convergence-cN} yields 
\[
\bar{\mu}^N := \mu^N \circ (c_N^x)^{-1} \to \mu^* \quad \mbox{and} \quad 
\bar{\boldnu}^N (\dif q, \dif t) := \delta_{\bar{q}_t^N}(\dif q) \dif t \to \boldnu^\ast.
\]
From the definition \eqref{def-sol-map} of the solution map, we obtain 
\[
    \Phi(\bar{\mu}^N, \bar{\boldnu}^N) = (\mu^N, \boldnu^N) \to (\mu^\ast, \boldnu^\ast)
    \quad \text{in} \quad \calP_1 \times \calY_1
\]
and from Corollary~\ref{cor:FP-continuity} and \eqref{barnu-to-limit} we obtain 
\[
    \Phi(\bar{\mu}^N, \bar{\boldnu}^N) \to \Phi(\mu^\ast, \boldnu^\ast)
    \quad \text{in} \quad \calP_1 \times \calY_1.
\]

\appendix
 
\section{BSDEs with stochastic Lipschitz drivers}

In this appendix, we recall/establish an auxiliary comparison principle for a family of BSDEs with stochastic Lipschitz drivers. The result is essentially a corollary \cite[Theorem 10]{briand2008bsdes}. Specifically, for some index set $I$ and any $i \in I$ we consider the BSDEs
\[
    dY^i_t = H^i_t(\omega,Y^i_t,Z^i_t)dt + Z^i_t dW_t, \quad Y^i_T = \xi^i
\]
defined on $(\Omega, \calF, (\calF_t), \PP)$. We assume that the drivers $H_t^i(\omega,y,z)$ are $\FF$-progressively measurable for all $(y,z) \in \RR \times \RR^d$ and that the terminal conditions $\xi^i$ are $\calF_T$-measurable. 

\begin{definition}[Comparison principle]\label{def:comparison-principle}
We say that the above family of BSDEs satisfies a comparison principle if, for any two indices $i,j \in I$ with 
\begin{align*}
    \xi^i \geq \xi^j \quad \PP\text{-a.s.} \quad \mbox{and} \quad
    H_t^i(y,z) \geq H_t^j(y,z) \quad \dif t \times \dif \PP\text{-a.s. for all}~(y,z) \in \RR \times \RR^d 
\end{align*}
the corresponding solutions $(Y^i, Z^i), (Y^j,Z^j) \in \SS^2(\RR;\PP) \times \HH^2(\RR^d;\PP)$ satisfy  
\begin{align*}
    Y_t^i \geq Y_t^j \quad \text{for all}~t \in [0,T],~\PP\text{-a.s.}
\end{align*}
\end{definition}

\begin{proposition}\label{prop:comparison-thm}
Suppose that $\xi^i$ and $H^i$ satisfy the following additional conditions $\PP$-a.s.~for any $i \in I$:
\begin{enumerate}
    \item \label{cond:stoc-Lip-H} 
    For some $\alpha \in (0,1)$ and $K \in \HH^2_{\operatorname{BMO}}(\PP)$ such that $K \geq 1$ and $\EE \left[ \calE\left(K \cdot W\right)_T^{q_\ast} \right] < \infty$ for some $q_\ast > 1$, the driver $H^i$ satisfies
    \begin{align*}
        (y-\bar{y})(H_t^i(y,z) - H_t^i(\bar{y},z)) &\leq K_t^{2\alpha}\abs{y-\bar{y}}^2, \\
        \abs{H_t^i(y,z) - H_t^i(y,\bar{z})} &\leq K_t \abs{z-\bar{z}}, \quad
        \text{for all}~y, \bar{y} \in \RR, \, z,\bar{z} \in \RR^d.
    \end{align*}
    \item \label{cond:p-integrability-H}
    The following integrability condition holds for some constants (that may depend on $i \in I$) $p^\ast > p_\ast$ where $p_\ast > 1$ is the H\"older conjugate of $q_\ast$:
    \[
    \EE \left[ \abs{\xi^i}^{p^\ast} + \left( \int_0^T \abs{H_s^i(0,0)} \dif s \right)^{p^\ast} \right] < \infty
    \]
\end{enumerate}
Then, the BSDEs with drivers $H^i$ and terminal conditions $\xi^i$ admit unique solutions
\[
    (Y^i, Z^i) \in \bigcap_{p < p^\ast} (\SS^p(\PP) \times \HH^p(\PP))
\]    
and satisfy the comparison principle.
\end{proposition}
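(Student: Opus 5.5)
The plan is to reduce everything to the theorem of Briand and Confortola \cite[Theorem 10]{briand2008bsdes}. For each fixed $i \in I$ that result gives existence and uniqueness under a stochastic monotonicity condition in $y$ and a stochastic Lipschitz condition in $z$ with a BMO constant, plus $L^{p^\ast}$-integrability of the data. So the first step is to check that conditions \eqref{cond:stoc-Lip-H} and \eqref{cond:p-integrability-H} match their hypotheses. Concretely, I will check the following.

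\begin{itemize}
\item The monotonicity bound $(y-\bar y)(H^i_t(y,z)-H^i_t(\bar y,z)) \le K_t^{2\alpha}|y-\bar y|^2$ with $\alpha\in(0,1)$ is their standing structure.
\item The $z$-Lipschitz constant $K_t$ lies in $\HH^2_{\operatorname{BMO}}(\PP)$, so $\calE(K\cdot W)$ is a true martingale. By the reverse H\"older inequality (Proposition~\ref{prop:BMO-chara}) it has a moment of order $q_\ast>1$, which is the assumption $\EE[\calE(K\cdot W)_T^{q_\ast}]<\infty$.
\item Taking $p^\ast>p_\ast$ puts us in their integrability regime.
\end{itemize}

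This yields for every $i$ a unique solution $(Y^i,Z^i)\in\SS^p(\PP)\times\HH^p(\PP)$ for all $p<p^\ast$. The exponents must be tracked so that the $\SS^p$ and $\HH^p$ range stated is exactly what their a priori estimate delivers.

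The second step is the comparison principle. Fix $i,j$ with $\xi^i\ge\xi^j$ and $H^i\ge H^j$. I will set $\delta Y=Y^i-Y^j$ and $\delta Z=Z^i-Z^j$ and linearize, using the same difference quotients as in \eqref{stability-linearization}:
\[
H^i_t(Y^i_t,Z^i_t)-H^j_t(Y^j_t,Z^j_t) = a_t\,\delta Y_t + b_t\cdot\delta Z_t + \big(H^i_t-H^j_t\big)(Y^j_t,Z^j_t).
\]
The coefficients satisfy $a_t\le K_t^{2\alpha}$ (the one-sided bound from monotonicity) and $|b_t|\le K_t$. The last term is nonnegative. Next I will change measure via $\calE(b\cdot W)$, which is well defined and has $L^{q_\ast}$-integrable density because $b$ is BMO-dominated by $K$. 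I will then discount by $\exp(\int_0^t a_s\,ds)$. This turns $\delta Y$ into a supermartingale-type expression whose terminal value is $\ge0$, so $\delta Y_t\ge0$.

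The main obstacle is integrability in this last step. The discount factor $\exp(\int a_s\,ds)$ is only bounded above by $\exp(\int K_s^{2\alpha}ds)$, which is not bounded. We must show that the local martingale part, after the Girsanov change, is a true martingale, or at least that the localized limits pass. To do this I will use that $\int_0^T K_s^{2\alpha}ds$ has exponential moments of small order: since $2\alpha<2$, Young's inequality bounds $K^{2\alpha}\le \epsilon K^2 + C_\epsilon$, and the John--Nirenberg inequality for BMO gives exponential moments of $\epsilon\int_0^T K_s^2\,ds$. Combined with H\"older, using $p^\ast>p_\ast$ and the $q_\ast$-moment of the density, this gives uniform integrability of the localized expressions.

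Rather than redo this computation, I expect to cite that exactly this argument is carried out in \cite[Theorem 10]{briand2008bsdes}. Their comparison holds under the same hypotheses, applied pairwise to $i,j$, since both drivers share the common $K$ and $\alpha$. The only point to check is that the constants are uniform across the index set, which holds by assumption.
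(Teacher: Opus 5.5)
Your proposal is correct and follows essentially the same route as the paper: existence and uniqueness are quoted from \cite[Theorem 10]{briand2008bsdes}. Comparison is obtained by linearizing with the difference quotients in $y$ and $z$, changing measure with density $\calE(\delta_z H^i\cdot W)_T$, and discounting by $e_t=\exp\{\int_0^t \delta_y H^i_s\,\dif s\}$, which gives $e_t\Delta Y_t=\EE^{\PP^\ast}\big[e_T\Delta\xi+\int_t^T e_s\Delta H_s(Y^j_s,Z^j_s)\,\dif s \,\big|\, \calF_t\big]\ge 0$. The paper justifies this representation by following the proof of \cite[Lemma 7]{briand2008bsdes} and leaves the integrability to ``standard arguments'', whereas you sketch that integrability via Young's inequality and John--Nirenberg and attribute it to Theorem 10.
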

\begin{proof}
The existence and uniqueness of solutions follows from Theorem 10 in \cite{briand2008bsdes}. The comparison principle is standard. Let $i,j \in I$ with
\[
\xi^i \geq \xi^j \quad \PP\text{-a.s.} \quad
H_t^i(y,z) \geq H_t^j(y,z) \quad \dif t \times \dif \PP\text{-a.s. for all}~(y,z) \in \RR \times \RR^d.
\]
Let
\[
    \Delta Y_t = Y_t^i - Y_t^j, \quad
    \Delta Z_t = Z_t^i - Z_t^j, \quad
    \Delta H_t (y,z) = H_t^i(y,z) - H_t^j(y,z) 
\]
and set 
\begin{align*}
\delta_z H_t^i &:= \frac{H_t^i (Y_t^i, Z_t^i) - H_t^i (Y_t^j, Z_t^j)}{\abs{\Delta Y_t}} \mathbf{1}_{\{\abs{\Delta Y_t} \neq 0\}}, \\
\delta_z H_t^i & := \frac{H_t^i (Y_t^j, Z_t^i) - H_t^i (Y_t^j, Z_t^j)}{\abs{\Delta Z_t}^2} \mathbf{1}_{\{\abs{\Delta Z_t} \neq 0\}} \Delta Z_t.
\end{align*}
Then, $(\Delta Y, \Delta Z) \in \SS^p(\PP) \times \HH^p(\PP)$ satisfies the linear BSDE
\[
\Delta Y_t = \int_t^T (\delta_y H_s^i \Delta Y_s + \delta_z H_s^i \Delta Z_s + \Delta H_s (Y_s^j, Z_s^j)) \dif s + \int_t^T \Delta Z_s \dif W_s.
\]
Following a similar argument as in the proof of  \cite[Lemma 7]{briand2008bsdes}, we obtain that
\[
e_t \Delta Y_t = \EE^{\PP^\ast}\left[ e_T \Delta \xi + \int_t^T e_s \Delta H_s(Y_s^j, Z_s^j) \dif s ~\middle|~ \calF_t \right],
\]
where $e_t := \exp\big\{\int_0^t \delta_y H_s^i \dif s \big\}$ and $\PP^\ast$ is equivalent to $\PP$ with density 
\[
    \frac{\dif \PP^\ast}{\dif \PP} = \calE\left( \delta_z H^i \cdot W \right)_T.
\]
The assertion now follows from standard arguments.
\end{proof}


\section{BMO martingales}\label{appx:BMO}

This appendix recalls key properties of BMO martingales.\footnote{The reader is referred to \cite{kazamaki2006BMO} for a detailed discussion of BMO martingales.} We continue working on our probability space $(\Omega,\calF,\PP)$ and denote by $\mathcal{T}$ the set of all $[0,T]$-valued $\FF$-stopping times.

\begin{definition}\label{def:BMO-space}
The BMO norm $\norm{H}_{\HH^2_{\operatorname{BMO}} (\RR^d; \PP)}$ of a process $H \in \HH^2 (\RR^d; \PP)$ is defined by 
\[
    \norm{H}_{\HH^2_{\operatorname{BMO}} (\RR^d; \PP)} := \sup_{\tau \in \mathcal{T}} \norm{\EE^\PP \left[ \int_\tau^T \abs{H_s}^2 \dif s \; \middle| \; \calF_\tau \right]}_{L^\infty(\Omega;\RR;\PP)}.
\]
We write $\HH^2_{\operatorname{BMO}} (\RR^d; \PP)$ for the set of all processes with finite BMO norm under $\PP$.
\end{definition}

\begin{proposition}(\cite[Theorem 3.3 and 3.6]{kazamaki2006BMO})\label{prop:BMO-norm-equivalence}
For any  $M>0$, there exist two constants $\gamma_1, \gamma_2$ that only depend on $M$ such that, for any two processes $\theta, H \in \HH^2_{{\operatorname{BMO}}} (\PP)$ such that $\|\theta\|_{\HH^2_{{\operatorname{BMO}}} (\PP)} \leq M$ the process $H$ belongs to  $\HH^2_{{\operatorname{BMO}}} (\PP^\theta)$ as well and satisfies
\[
\gamma_1\|H\|_{\HH^2_{{\operatorname{BMO}}} (\PP)} \leq \|H\|_{\HH^2_{{\operatorname{BMO}}} (\PP^\theta)} \leq \gamma_2\|H\|_{\HH^2_{{\operatorname{BMO}}} (\PP)} \quad \mbox{where} \quad 
\frac{\dif \PP^\theta}{\dif \PP} := \calE(\theta \cdot W)_T.
\]
\end{proposition}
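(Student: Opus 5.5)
The plan is to prove the upper inequality $\|H\|_{\HH^2_{\operatorname{BMO}}(\PP^\theta)} \le \gamma_2 \|H\|_{\HH^2_{\operatorname{BMO}}(\PP)}$ directly. This uses Bayes' formula, the reverse H\"older inequality for $\calE(\theta\cdot W)$ and the energy inequality. The lower inequality will then follow by applying the same bound with the roles of $\PP$ and $\PP^\theta$ exchanged.

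\textbf{Step 1 (upper bound).} Write $\calE_t := \calE(\theta\cdot W)_t$ and fix a stopping time $\tau\in\mathcal T$. By Bayes' formula,
\[
\EE^{\PP^\theta}\Big[\int_\tau^T |H_s|^2\,\dif s \,\Big|\, \calF_\tau\Big] = \EE\Big[\frac{\calE_T}{\calE_\tau}\int_\tau^T |H_s|^2\,\dif s \,\Big|\, \calF_\tau\Big].
\]
Kazamaki's criterion (Theorem 3.1 in \cite{kazamaki2006BMO}) gives the reverse H\"older inequality $R_p$ for $\calE(\theta\cdot W)$ whenever the BMO norm of $\theta\cdot W$ is below a threshold $\Phi(p)$, where $\Phi(p)\uparrow\infty$ as $p\downarrow 1$. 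Since $\|\theta\|_{\HH^2_{\operatorname{BMO}}(\PP)}\le M$, we can choose $p=p(M)>1$ such that $\EE[(\calE_T/\calE_\tau)^p\mid\calF_\tau]\le C_p$, with $C_p$ depending only on $p$ and $M$. Let $p'$ be the conjugate exponent. Conditional H\"older together with the energy inequality (Proposition~\ref{prop:energy-ineq}) then gives
\[
\EE\Big[\Big(\int_\tau^T|H_s|^2\dif s\Big)^{p'}\Big|\calF_\tau\Big]^{1/p'} \le c(p')\,\|H\|_{\HH^2_{\operatorname{BMO}}(\PP)},
\]
where $c(p')$ is of the form $(\lceil p'\rceil !)^{1/p'}$, after first interpolating up to an integer power. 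Hence
\[
\EE^{\PP^\theta}\Big[\int_\tau^T|H_s|^2\dif s\Big|\calF_\tau\Big]\le C_p^{1/p}\,c(p')\,\|H\|_{\HH^2_{\operatorname{BMO}}(\PP)}.
\]
Taking the supremum over $\tau$ and the $L^\infty$ norm yields $\gamma_2=\gamma_2(M)$, and in particular shows that $H\in\HH^2_{\operatorname{BMO}}(\PP^\theta)$.

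\textbf{Step 2 (reverse direction).} Under $\PP^\theta$ the process $W^\theta=W-\int_0^\cdot\theta_s\dif s$ is a Brownian motion, and $\dif\PP/\dif\PP^\theta=\calE(-\theta\cdot W^\theta)_T$. Applying Step 1 with $H=\theta$ shows $\|\theta\|_{\HH^2_{\operatorname{BMO}}(\PP^\theta)}\le\gamma_2(M)M=:M'$, which depends only on $M$. Applying Step 1 again, now with base measure $\PP^\theta$, Brownian motion $W^\theta$ and density $\calE(-\theta\cdot W^\theta)$, gives $\|H\|_{\HH^2_{\operatorname{BMO}}(\PP)}\le\gamma_2(M')\|H\|_{\HH^2_{\operatorname{BMO}}(\PP^\theta)}$. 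Setting $\gamma_1:=1/\gamma_2(M')$ completes the argument.

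\textbf{Main obstacle.} The delicate point is that the reverse H\"older constant must depend only on the bound $M$ and not on the particular $\theta$. This requires the quantitative form of Kazamaki's theorem, namely an explicit $p(M)$ and an explicit $C_p$ in terms of $\|\theta\cdot W\|_{\operatorname{BMO}}$, rather than merely the qualitative statement that $\calE(\theta\cdot W)$ satisfies some $R_p$. It also requires checking that the filtration is continuous (it is Brownian), so that the BMO theory of \cite{kazamaki2006BMO} applies verbatim under both $\PP$ and $\PP^\theta$.
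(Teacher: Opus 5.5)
Your proposal is correct. It is essentially the standard argument behind the cited Kazamaki results, which the paper invokes without giving its own proof: the upper bound comes from Bayes' formula, the reverse H\"older inequality with $p=p(M)$ and a constant depending only on $M$ (recorded in the paper as Proposition~\ref{prop:BMO-chara}), and the energy inequality, and the lower bound comes from re-applying that step under $\PP^\theta$ with $\dif\PP/\dif\PP^\theta=\calE(-\theta\cdot W^\theta)_T$ after checking $\|\theta\|_{\HH^2_{\operatorname{BMO}}(\PP^\theta)}\le\gamma_2(M)M$. The only detail to make explicit is that you need the energy inequality conditionally on $\calF_\tau$; this holds by the same proof as the unconditional version stated in Proposition~\ref{prop:energy-ineq}.
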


\begin{proposition}[Energy inequality]\label{prop:energy-ineq}
Every $H \in \HH^2_{{\operatorname{BMO}}} (\PP)$ satisfies the following inequality:
\[
\EE \left[ \left( \int_0^T \abs{H_s}^2 \dif s \right)^n \right] \leq n! \|H\|^{2n}_{\HH^2_{{\operatorname{BMO}}} (\PP)} \quad \text{for all}~n \in \NN.
\]
\end{proposition}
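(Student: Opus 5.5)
We prove the stronger conditional statement by induction on $n$. Write $A_t := \int_0^t \abs{H_s}^2 \dif s$, a continuous, adapted, nondecreasing process. Let $c := \sup_{\tau \in \mathcal{T}} \norm{\EE[A_T - A_\tau \mid \calF_\tau]}_{L^\infty}$, so that $c = \norm{H}^2_{\HH^2_{\operatorname{BMO}}(\PP)}$ with the normalization implicit in the exponent $2n$. The claim to be proved is
\[
\EE\big[(A_T - A_\tau)^n \,\big|\, \calF_\tau\big] \le n!\, c^n \quad \PP\text{-a.s.} \quad \text{for all } \tau \in \mathcal{T},\ n \in \NN .
\]
Taking $\tau = 0$ and then expectations gives the stated energy inequality. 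The case $n=1$ is exactly the definition of $c$.

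For the induction step, the key identity is the pathwise formula
\[
(A_T - A_\tau)^n = n \int_\tau^T (A_T - A_s)^{n-1} \dif A_s .
\]
It holds because $s \mapsto -(A_T - A_s)^n$ is continuous and of finite variation with differential $n (A_T - A_s)^{n-1} \dif A_s$. Take $\EE[\,\cdot \mid \calF_\tau]$ on both sides. Since $A$ is continuous and adapted, the integrand $(A_T - A_s)^{n-1}$ may be replaced by its optional projection $\EE[(A_T - A_s)^{n-1} \mid \calF_s]$ inside the $\dif A_s$-integral. This is the standard property of optional projections against adapted increasing processes. One can also verify it directly by approximating $\int_\tau^T (A_T - A_s)^{n-1} \dif A_s$ by Riemann–Stieltjes sums over deterministic partitions, and using the tower property and monotone convergence on each summand.

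Apply the induction hypothesis at the stopping times $s \vee \tau$. This bounds the projected integrand by $(n-1)!\, c^{n-1}$, so
\[
\EE\big[(A_T - A_\tau)^n \mid \calF_\tau\big] \le n \,(n-1)!\, c^{n-1}\, \EE[A_T - A_\tau \mid \calF_\tau] \le n!\, c^n .
\]
This closes the induction.

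The main technical point is justifying the replacement of the integrand by its optional projection in the conditional setting. The induction hypothesis only holds up to null sets for each fixed stopping time, so a version of the projection is needed that is bounded by $(n-1)!\, c^{n-1}$ simultaneously for all $s$. This is handled by applying the hypothesis at all stopping times, combined with the optional section theorem, or more simply by working with the Riemann-sum approximation, where only countably many deterministic times appear. Integrability is not an issue: truncating $A$ at a level $N$ and letting $N \to \infty$ by monotone convergence ensures all quantities are finite during the argument.
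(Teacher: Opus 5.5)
Your argument is correct. It is the standard induction proof of the energy inequality: the identity $(A_T-A_\tau)^n = n\int_\tau^T (A_T-A_s)^{n-1}\,\dif A_s$, followed by optional projection (or lower Riemann sums) and the BMO bound at the stopping times $s\vee\tau$. The paper gives no proof of its own and simply quotes this result from Kazamaki.

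Your reading $c=\norm{H}^2_{\HH^2_{\operatorname{BMO}}(\PP)}$ is in fact necessary. Definition~\ref{def:BMO-space} omits the square root, and with that definition the stated bound fails already for $n=1$ and a nonzero deterministic constant $H$ with $\abs{H}^2T<1$.
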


\begin{proposition}(\cite[Lemma A.2]{herdegen2021equilibrium} and \cite[Theorem 3.1]{kazamaki2006BMO})\label{prop:BMO-chara}
For any $M>0$, there exist some $p \in (1,\infty)$ and $C_p > 0$ that only depends on $p$ such that every process $H \in \HH^2_{{\operatorname{BMO}}} (\PP)$ with $\norm{H}_{\HH^2_{{\operatorname{BMO}}} (\PP)} \leq M$ satisfies the following for all $\tau \in \mathcal{T}$:
\begin{enumerate}
    \item The following inequality holds:
    \[
    \norm{\EE_\tau \left[\left(\frac{\calE(H \cdot W)_\tau}{\calE(H \cdot W)_T}\right)^{\frac1{p-1}}\right]}_{L^\infty (\PP)} \leq C_p,
    \]
    \item The reverse H\"older inequality holds:
    \[
    \EE_\tau^{\PP} [\calE(H \cdot W)_T^p] \leq C_p \calE(H \cdot W)_\tau^p.
    \]
\end{enumerate}
\end{proposition}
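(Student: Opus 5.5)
The plan is to derive both estimates from the John--Nirenberg inequality for BMO martingales, combined with the supermartingale property of stochastic exponentials. For the reverse H\"older inequality (2) I will also need a change of measure, which turns (2) into an instance of (1). Throughout, write $M := H \cdot W$, so that $\langle M\rangle_t = \int_0^t \abs{H_s}^2 \dif s$. For $\tau \in \mathcal{T}$ write $\Delta M := M_T - M_\tau$ and $\Delta\langle M\rangle := \langle M\rangle_T - \langle M\rangle_\tau$. With the paper's normalisation of the BMO norm (the supremum of the conditional expectations of $\Delta\langle M\rangle$), the John--Nirenberg inequality reads
\[
\EE_\tau\big[\exp(\varepsilon \Delta\langle M\rangle)\big] \le \frac{1}{1-\varepsilon \norm{H}_{\HH^2_{\operatorname{BMO}}(\PP)}}
\quad \text{whenever} \quad \varepsilon \norm{H}_{\HH^2_{\operatorname{BMO}}(\PP)} < 1 .
\]
This is the only quantitative input, and it depends on $H$ only through the bound $M$ on its norm.

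\emph{Step 1: the bound in (1).} Set $r := 1/(p-1)$. Then
\[
\Big(\tfrac{\calE(M)_\tau}{\calE(M)_T}\Big)^{r}
= \exp\big(-r\Delta M - r^2 \Delta\langle M\rangle\big)\cdot \exp\big((\tfrac r2 + r^2)\Delta\langle M\rangle\big).
\]
The first factor is $\big(\calE(-2rM)_T/\calE(-2rM)_\tau\big)^{1/2}$. The conditional Cauchy--Schwarz inequality bounds $\EE_\tau[\cdot]$ of the product by
\[
\EE_\tau\big[\calE(-2rM)_T/\calE(-2rM)_\tau\big]^{1/2}\,\EE_\tau\big[\exp((r+2r^2)\Delta\langle M\rangle)\big]^{1/2}.
\]
The first term is at most $1$, since $\calE(-2rM)$ is a positive local martingale and hence a supermartingale. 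For the second term I choose $p$ so large that $(r+2r^2)M<1$; John--Nirenberg then bounds it by $(1-(r+2r^2)M)^{-1/2}$. This yields (1) with a constant depending only on $p$, where $p$ is chosen in terms of $M$.

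\emph{Step 2: the bound in (2).} A direct H\"older splitting of $\exp(p\Delta M - \tfrac p2\Delta\langle M\rangle)$ only works when $M<1$, so I will use a change of measure instead. Let $\dif \QQ := \calE(M)_T \dif \PP$, and set $\hat M := \langle M\rangle - M$. By Girsanov, $\hat M$ is a $\QQ$-local martingale with $\langle \hat M\rangle = \langle M\rangle$. By Proposition~\ref{prop:BMO-norm-equivalence}, $\norm{H}_{\HH^2_{\operatorname{BMO}}(\QQ)} \le \gamma_2(M)\,M =: \hat M_0$. A direct computation gives $\calE(\hat M) = \exp(\langle M\rangle - M - \tfrac12\langle M\rangle) = 1/\calE(M)$. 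Bayes' formula then gives
\[
\EE^\PP_\tau\Big[\Big(\tfrac{\calE(M)_T}{\calE(M)_\tau}\Big)^{p}\Big]
= \EE^\QQ_\tau\Big[\Big(\tfrac{\calE(M)_T}{\calE(M)_\tau}\Big)^{p-1}\Big]
= \EE^\QQ_\tau\Big[\Big(\tfrac{\calE(\hat M)_\tau}{\calE(\hat M)_T}\Big)^{p-1}\Big].
\]
The right-hand side is exactly the quantity in Step 1, now under $\QQ$, for the BMO martingale $\hat M$, with exponent $r=p-1$. Applying Step 1 with bound $\hat M_0$ in place of $M$ and with $p>1$ so close to $1$ that $((p-1)+2(p-1)^2)\hat M_0<1$ gives (2). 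Finally I take the smaller of the admissible exponents for (2), or simply state (1) and (2) for their respective admissible $p$, and take $C_p$ to be the larger of the two constants.

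The main obstacle is Step 2. The naive H\"older argument fails for large BMO norms, so the key point is the duality $1/\calE(M) = \calE(\hat M)$ under $\QQ$, together with the uniform control of the BMO norm of $\hat M$ under $\QQ$ supplied by Proposition~\ref{prop:BMO-norm-equivalence}. One also has to make sure that all constants depend only on $M$, and not on the particular $H$.
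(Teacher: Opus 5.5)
The paper gives no argument for this proposition. It only cites \cite[Lemma A.2]{herdegen2021equilibrium} and \cite[Theorem 3.1]{kazamaki2006BMO}. Taken separately, your two steps are sound. Step 1 is the standard Cauchy--Schwarz/John--Nirenberg proof of the $(A_p)$ condition. Step 2 is the standard duality $1/\calE(H\cdot W)=\calE(\hat M)$ under $\QQ$, which turns reverse H\"older of order $p$ under $\PP$ into the $(A_{p/(p-1)})$ condition under $\QQ$. You should add that $\calE(H\cdot W)$ is a true martingale because $H$ is BMO, so that $\QQ$ is a probability measure and Bayes' formula applies.

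The genuine gap is your last sentence: you cannot get one $p$ for both properties. By conditional Jensen, property (1) holds for every $p'>p$ once it holds for $p$. Property (2) holds for every $p'<p$ once it holds for $p$. Your two admissible ranges lie at opposite ends. Step 1 needs $(r+2r^2)M<1$ with $r=1/(p-1)$, which forces $p>3$ as soon as $M\ge 1$. Step 2 needs $\big((p-1)+2(p-1)^2\big)\hat M_0<1$ with $\hat M_0=\gamma_2M\ge M$, which forces $p<3/2$. Taking "the smaller exponent" destroys (1).

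This cannot be repaired, because for large $M$ no common $p$ exists. Take $d=2$ and $a>0$. Let $\sigma$ be the exit time of $W^1$ from $(-1,1)$ and $\rho$ the exit time of $X_t:=W^2_t+at$ from $(-1,1)$. After a deterministic time change of $[0,\infty)$ onto $[0,T)$, set $H\cdot W:=aW^1_{\cdot\wedge\sigma}-aW^2_{\cdot\wedge\rho}$. Then $\norm{H}_{\HH^2_{\operatorname{BMO}}(\PP)}\le a^2+2a$.

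The exit time of a Brownian motion from $(-1,1)$ satisfies $\EE[e^{\lambda\sigma}]=\infty$ exactly when $\lambda\ge\pi^2/8$. Independence and Jensen then give $\EE\big[\calE(H\cdot W)_T^{-1/(p-1)}\big]=\infty$ whenever $p-1\le 4a^2/\pi^2$.

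For (2), write $\EE\big[\calE(-aW^2_{\cdot\wedge\rho})_T^{p}\big]$ as the expectation of $\exp\big((p-1)(-aX_\rho+\frac{a^2}{2}\rho)\big)$ under $\calE(-aW^2_{\cdot\wedge\rho})_T\cdot\PP$. Under this measure $X$ is a Brownian motion up to $\rho$. The same fact, with independence and Jensen, gives $\EE\big[\calE(H\cdot W)_T^{p}\big]=\infty$ whenever $p-1\ge\pi^2/(4a^2)$.

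For $a\ge\pi/2$, every $p>1$ therefore violates (1) or (2) at $\tau=0$. The statement holds only with two exponents, which is your fallback. This is also how the paper uses it: (1) only in Lemma~\ref{lem:convergence-in-prob}, and (2) everywhere else. You should say this explicitly instead of claiming a common $p$.

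A smaller point concerns Step 2. All of its difficulty sits in Proposition~\ref{prop:BMO-norm-equivalence}, i.e.\ in the uniform bound on $\norm{H}_{\HH^2_{\operatorname{BMO}}(\QQ)}$. In \cite{kazamaki2006BMO} that Girsanov invariance (Theorems 3.3 and 3.6) comes after, and is proved by means of, the reverse H\"older inequality (Theorem 3.1). Your Step 2 is therefore the easy converse of that implication rather than an independent proof. It is legitimate here only because the paper lists Proposition~\ref{prop:BMO-norm-equivalence} as a separate black box.
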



\section{Young measures and stable topologies}\label{appx:Young-measure}

This appendix recalls key properties of (integrable) Young measures and the stable topology that we utilize. A detailed discussion of Young measures can be found in \cite{castaing2004young, florescu2012young}.

\subsection{Young measures}
Throughout, $(\Omega, \mathcal{A}, \mu)$ is a positive measure space, $(E, \tau_E)$ is a Polish space with metic $d$, and $E$ and $\calP(E)$ are equipped with their respective Borel $\sigma$-algebra.

\begin{definition}[Young measures]\label{def:Young-meas}
If a positive measure $\boldsym{\nu}$ on $\Omega \times E$ satisfies
\[
\boldsym{\nu}(A \times E) = \mu(A) \quad
\text{for all}~A \in \mathcal{A},
\]
then $\boldsym{\nu}$ is called a Young measure on $\Omega \times E$.
We denote by $\calY(\Omega \times E)$, or $\calY (E)$ in short, the space of Young measures on $\Omega \times E$.
\end{definition}


\begin{proposition}(\cite[Theorem 3.2 and Remark 3.3]{florescu2012young} or \cite[p.19-20]{castaing2004young})\label{prop:disintegration}
For every Young measure $\boldsym{\nu} \in \calY(\Omega \times E)$, there exists a measurable map $\nu_\cdot: \Omega \to \calP(E)$ such that
\[
\boldsym{\nu}(A \times B) = \int_A \nu_\omega (B) \dif \mu(\omega)
= \int_\Omega \int_{\calP(E)} \mathbf{1}_{A \times B} (\omega, q) \nu_\omega (\dif q) \mu (\dif \omega)
\]
for any measurable set $A \subset \Omega$ and $B \subset \calP(E)$.
The mapping $\nu_\cdot$ is called \emph{disintegration} of the measure $\boldsym{\nu}$ w.r.t.~$\mu$. The disintegration is unique in the sense that any two disintegrations $\nu_\cdot, \nu'_\cdot$ of $\boldsym{\nu}$, satisfy $\nu_\cdot = \nu'_\cdot$ $\mu$-almost everywhere.
\end{proposition}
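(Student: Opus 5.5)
The plan is to construct the disintegration from Radon–Nikodym derivatives indexed by a countable generating class of Borel subsets of $E$. The countable additivity of $B \mapsto \nu_\omega(B)$ will come from the Polish structure of $E$, which gives tightness. Here $B$ ranges over Borel subsets of $E$; the ``$B \subset \calP(E)$'' in the statement should read $B \subset E$. I assume $\mu$ is finite, which is the only case used in the paper ($\mu$ is Lebesgue measure on $[0,T]$). After normalising, $\mu$ is a probability measure and $\boldsym{\nu}$ is a probability measure on $\Omega \times E$ with first marginal $\mu$. The $\sigma$-finite case follows by gluing disintegrations along a countable partition of $\Omega$ into sets of finite measure.

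\textbf{Step 1: Radon--Nikodym derivatives on a countable algebra.} Fix a countable dense set $\{e_k\}$ in $E$. Let $\mathcal{B}_0$ be the countable algebra generated by the open balls $B(e_k,r)$ with $r \in \mathbb{Q}_{>0}$; it generates the Borel $\sigma$-algebra. For each $B \in \mathcal{B}_0$, the finite measure $A \mapsto \boldsym{\nu}(A \times B)$ is dominated by $\mu$, so it has a density $p_B \in [0,1]$. Because $\mathcal{B}_0$ is countable, one $\mu$-null set $N_1$ can be discarded outside of which the following hold:
\begin{itemize}
\item $p_E = 1$ and $p_\emptyset = 0$;
\item $p_{B\cup B'} = p_B + p_{B'}$ for disjoint $B,B' \in \mathcal{B}_0$;
\item $p_B \le p_{B'}$ whenever $B \subset B'$.
\end{itemize}
So for $\omega \notin N_1$, $B \mapsto p_B(\omega)$ is a finitely additive probability on $\mathcal{B}_0$.

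\textbf{Step 2: countable additivity via tightness (main obstacle).} A finitely additive set function on $\mathcal{B}_0$ need not be countably additive, and this is the delicate step. I would first try the following.
\begin{itemize}
\item By Ulam's theorem, the $E$-marginal $\boldsym{\nu}(\Omega\times\cdot)$ is tight. Choose compacts $K_m$ with $\boldsym{\nu}(\Omega\times K_m^c) < 2^{-m}$.
\item Approximate each $K_m$ from outside by sets in $\mathcal{B}_0$: finite unions of rational balls, with radii shrinking along a countable sequence.
\item Discard a further null set $N_2$ off which these countably many outer approximations also satisfy monotone limit relations of the form $p_{U_{m,j}}(\omega) \downarrow$ as $j\to\infty$. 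Off $N_2$ this gives an inner-compact-regularity statement for $p_\cdot(\omega)$ on $\mathcal{B}_0$.
\item A compactness argument, as in the standard proof of the Kolmogorov/Carathéodory extension, then yields countable additivity on $\mathcal{B}_0$.
\end{itemize}
If this becomes too technical, the fallback is to use that $E$ is Borel-isomorphic to a Borel subset of $[0,1]$. Then build regular conditional distribution functions $F_\omega(r)=p_{(-\infty,r]}(\omega)$ for $r \in \mathbb{Q}$, made monotone and right-continuous off a null set. These define probability measures by the Lebesgue–Stieltjes construction, after checking the mass sits on the image of $E$ for a.e. $\omega$.

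\textbf{Step 3: extension, measurability, the defining identity and uniqueness.} On the exceptional null set, set $\nu_\omega := \delta_{e_1}$.
\begin{itemize}
\item Carathéodory extends $p_\cdot(\omega)$ to a Borel probability measure $\nu_\omega$.
\item The class of Borel $B$ for which $\omega\mapsto\nu_\omega(B)$ is measurable and $\boldsym{\nu}(A\times B)=\int_A \nu_\omega(B)\,\dif\mu$ holds for all $A \in \mathcal{A}$ is a Dynkin system containing $\mathcal{B}_0$. By the $\pi$–$\lambda$ theorem it is all Borel sets. This gives measurability of $\omega\mapsto\nu_\omega$ into $\calP(E)$ with its Borel (weak) $\sigma$-algebra, and the rectangle identity, which extends to the integral form by a monotone class argument.
\item For uniqueness, take two disintegrations $\nu,\nu'$. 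For each $B\in\mathcal{B}_0$, both $\nu_\cdot(B)$ and $\nu'_\cdot(B)$ are densities of $\boldsym{\nu}(\cdot\times B)$ with respect to $\mu$, so they agree $\mu$-a.e.
\item Countability of $\mathcal{B}_0$ yields a single null set off which $\nu_\omega=\nu'_\omega$ on the $\pi$-system $\mathcal{B}_0$. Hence $\nu_\omega = \nu'_\omega$ on all Borel sets by Dynkin's lemma.
\end{itemize}
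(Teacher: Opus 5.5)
The paper does not prove this statement; it only cites Florescu--Godet-Thobie and Castaing--Raynaud de Fitte--Valadier. Your self-contained regular-conditional-probability construction is therefore necessarily a different route. Your correction $B\subset E$ and your restriction to finite (then $\sigma$-finite) $\mu$ are appropriate. Step 1, Step 3, the uniqueness argument and the \emph{fallback} in Step 2 are the standard construction and are sound; that fallback uses a Borel isomorphism onto a Borel subset of $[0,1]$, regularises the rational distribution functions off a null set, and checks that a.e.\ $\nu_\omega$ charges only the image of $E$.

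The primary route of Step 2 has a genuine gap. The conditions you impose off $N_2$ concern only outer approximations $U_{m,j}\in\mathcal{B}_0$ of the compacts $K_m$. They give tightness of $p_\cdot(\omega)$, not the inner regularity of each $B\in\mathcal{B}_0$ that you then invoke. A tight finitely additive probability on $\mathcal{B}_0$ need not be countably additive. Take $E=\RR$ and $\{e_k\}=\mathbb{Q}$, so that $\mathcal{B}_0$ consists of finite unions of intervals with rational endpoints, and set $p(B):=\lim_{x\downarrow 0}\mathbf{1}_B(x)$. This is a finitely additive probability on $\mathcal{B}_0$ with $p(U)=1$ for every open $U\in\mathcal{B}_0$ containing $\{0\}$, so every condition of your type holds. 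Yet $(0,1/n)\downarrow\emptyset$ while $p((0,1/n))=1$ for all $n$.

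In the Kolmogorov extension proof, inner regularity is inherited from countably additive marginals; here it has to be imposed by hand. For each $B\in\mathcal{B}_0$, fix $C_{B,n}\in\mathcal{B}_0$ with $\overline{C_{B,n}}\subset B$ and $C_{B,n}\uparrow B$. To do this, write $B$ as a finite union of atoms $\bigcap_i B(e_i,r_i)\cap\bigcap_j B(e_j,s_j)^c$ and shrink only the $r_i$ through rationals; complements of open balls are already closed. Then discard the null set off which $p_{C_{B,n}}(\omega)\uparrow p_B(\omega)$ fails. Also require $\sup_m \lim_j p_{U_{m,j}}(\omega)=1$, which holds a.e.\ because $\int \lim_j p_{U_{m,j}}\,\dif\mu \ge \boldsym{\nu}(\Omega\times K_m)$.

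With these conditions the compactness argument closes. Suppose $B_n\downarrow\emptyset$ with $p(B_n)\ge\delta$. First choose $m$ with $\inf_j p(U_{m,j})>1-\delta/2$. Pick $C_k:=C_{B_k,n_k}$ with $p(B_k\setminus C_k)\le \delta 2^{-k-2}$ and set $D_n:=C_1\cap\dots\cap C_n$; then $p(D_n)\ge 3\delta/4$. The compacts $\overline{D_n}\cap K_m$ decrease to $\emptyset$, so some $\overline{D_N}$ is disjoint from $K_m$, hence from some $U_{m,j}$. This gives $p(D_N)\le 1-p(U_{m,j})<\delta/2$, a contradiction. Either add these conditions to your primary route or promote the fallback to the main argument.
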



\begin{definition}[Stable topology]\label{def:stable-topology}
The stable topology on $\calY(E)$ is the weakest topology for which the map
\begin{equation}\label{eq:linear-functional-on-calM}
\begin{cases}
  \calY & \to \RR, \\
  \boldsym{\nu} & \mapsto \int_{\Omega \times E} \mathbf{1}_A (\omega) f(q) \dif \boldsym{\nu} (\omega, q)
\end{cases}
\end{equation}
is continuous for any $A \in \mathcal{A}$ and any bounded, Lipschitz continuous function $f:E \to \RR$.\footnote{This definition can be justified by Proposition 3.22 in \cite{florescu2012young}.} The stable topology on $\calY(E)$ is denoted by $\calS(E)$.
\end{definition}

\begin{proposition}(\cite[Proposition 3.25]{florescu2012young})\label{prop:spaceY-metrizable}
If $\mathcal{A}$ is countably generated, then the topology $\calS(E)$ is metrizable.
\end{proposition}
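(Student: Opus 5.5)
The statement is the metrizability of the stable topology $\calS(E)$ when $\mathcal{A}$ is countably generated. The plan is to construct an explicit countable family of the defining test functionals and to show that the induced weighted sum is a metric generating $\calS(E)$.

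The first step is to reduce both test classes to countable ones.
\begin{enumerate}
\item[(a)] Since $\mathcal{A}$ is countably generated, there is a countable algebra $\mathcal{A}_0$ generating $\mathcal{A}$.
\item[(b)] Since $E$ is Polish, one can choose a countable family $\{f_k\}$ of bounded Lipschitz functions that is convergence determining on $\calP(E)$. A natural choice is $f_{j,r}(x) = \max\{0, 1 - r\,d(x,x_j)\}$ together with finite minima of these, where $\{x_j\}$ is dense and $r \in \QQ_+$.
\end{enumerate}
Then define
\[
\rho(\boldsym{\nu},\boldsym{\nu}') := \sum_{m,k} 2^{-m-k} \frac{\abs{\int \mathbf{1}_{A_m} f_k \,\dif(\boldsym{\nu}-\boldsym{\nu}')}}{1+\|f_k\|_\infty\,\mu(A_m)},
\]
restricting to $A_m$ of finite measure. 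If $\mu$ is only $\sigma$-finite, first pass to an equivalent probability measure; the paper's case is Lebesgue measure on $[0,T]$, which is finite.

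The second step is to check that $\rho$ is a metric. Symmetry and the triangle inequality are immediate. Separation uses the disintegration of Proposition~\ref{prop:disintegration}. If $\rho(\boldsym{\nu},\boldsym{\nu}')=0$, then $\int_A \int f_k \,\dif\nu_\omega\,\dif\mu = \int_A\int f_k\,\dif\nu'_\omega\,\dif\mu$ for all $A \in \mathcal{A}_0$. A monotone class argument extends this to all $A\in\mathcal{A}$, so $\int f_k \,\dif\nu_\omega = \int f_k\,\dif\nu'_\omega$ for $\mu$-a.e.\ $\omega$ and every $k$, with one countable union of null sets. Since $\{f_k\}$ determines measures, $\nu_\omega=\nu'_\omega$ a.e., and hence $\boldsym{\nu}=\boldsym{\nu}'$.

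The third step, which is the main obstacle, is to show that the metric topology coincides with $\calS(E)$. The metric topology is coarser than $\calS(E)$, since each summand is $\calS(E)$-continuous and the series converges uniformly. For the converse, one must show that every defining functional $\boldsym{\nu}\mapsto\int \mathbf{1}_A f\,\dif\boldsym{\nu}$, with arbitrary $A\in\mathcal{A}$ and $f$ bounded Lipschitz, is $\rho$-continuous.
\begin{enumerate}
\item[(i)] Approximating $A$ is easy: pick $A_m \in \mathcal{A}_0$ with $\mu(A\triangle A_m)<\epsilon$. The resulting error is at most $\|f\|_\infty\epsilon$ uniformly in $\boldsym{\nu}$, because every Young measure has $\Omega$-marginal $\mu$.
\item[(ii)] Approximating $f$ by the countable family is the delicate part, since $\{f_k\}$ does not approximate $f$ uniformly on $E$. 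Here I would first try to show that convergence of $\int \mathbf{1}_{A}f_k\,\dif\boldsym{\nu}^n$ for all $k$ implies $\calS(E)$-convergence along sequences, using the Portmanteau-type argument for Young measures (cf.\ \cite{florescu2012young}). This should pass through the fact that $\calS(E)$ coincides with the topology generated by $\mathbf{1}_A\otimes f$ for $f$ ranging over any convergence-determining class, which is Proposition~3.22 referenced in the footnote. For a general Polish $E$ I would instead use that $E$ embeds homeomorphically into a compact metric space $\hat E$. Lipschitz functions on $\hat E$ are separable in sup norm, so the countable family can be taken dense in sup norm on the compactification. The approximation of $f$ is then uniform, which upgrades the sequential statement to an identity of topologies.
\end{enumerate}
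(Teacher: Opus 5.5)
The paper gives no proof of this statement; it only cites Florescu and Godet-Thobie, so your argument has to stand on its own. Steps 1 and 2, the inclusion $\tau_\rho\subseteq\calS(E)$, and the set approximation in (i) are fine. The gap is in (ii). The route you commit to for general Polish $E$ rests on a false step. A sup-norm dense sequence in $C(\hat E)$ uniformly approximates only those bounded functions on $E$ that extend continuously to $\hat E$, i.e.\ those that are uniformly continuous for the induced metric $\hat d$. The functionals defining $\calS(E)$, however, involve arbitrary bounded $d$-Lipschitz $f$. For $E=\RR\subset[-\infty,+\infty]$ and $f=\sin$, every $g\in C(\hat E)$ satisfies $\sup_{x\in\RR}\abs{\sin x-g(x)}\ge 1$. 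So $\boldsym{\nu}\mapsto\int\mathbf{1}_A f\,\dif\boldsym{\nu}$ is not a uniform limit of your countably many functionals, which in any case are built from $d$-tents rather than from a dense subset of $C(\hat E)$. To rescue this route you would first have to show, by a Portmanteau argument for nets, that $\calS(E)$ is also generated by the functionals $\mathbf{1}_A\otimes g$ with $g\in C(\hat E)|_E$.

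The missing observation is that \emph{no upgrade is needed}. The sequential statement is elementary. If $\rho(\boldsym{\nu}^n,\boldsym{\nu})\to 0$, then by (i) $\int\mathbf{1}_A f_k\,\dif\boldsym{\nu}^n\to\int\mathbf{1}_A f_k\,\dif\boldsym{\nu}$ for every $A\in\mathcal{A}$ and every $k$. For $\mu(A)>0$ (the case $\mu(A)=0$ is trivial), apply the convergence-determining property of $\{f_k\}$ to the probability measures $\boldsym{\nu}^n(A\times\cdot)/\mu(A)$. This gives convergence against every bounded continuous $f$, i.e.\ $\boldsym{\nu}^n\to\boldsym{\nu}$ in $\calS(E)$. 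Neither a Portmanteau theorem for Young measures nor an identification of topologies via Proposition~3.22 is required.

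Because $\tau_\rho$ is metrizable and coarser than $\calS(E)$, the sequential statement already suffices. Let $F$ be $\calS(E)$-closed and suppose $F\ni\boldsym{\nu}^n\to\boldsym{\nu}$ in $\tau_\rho$. Then $\boldsym{\nu}^n\to\boldsym{\nu}$ in $\calS(E)$, so $\boldsym{\nu}\in F$. Thus $F$ is $\tau_\rho$-sequentially closed, hence $\tau_\rho$-closed, and $\calS(E)=\tau_\rho$. The passage from sequences to topologies is automatic here precisely because the coarser topology is sequential. With this, your metric built from the $d$-tents works as it stands, and the compactification is unnecessary.
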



\begin{definition}[Tightness]\label{def:tightness-Young}
A subset $\calK \subset \calY(E)$ is called \emph{tight} if, for any positive value $\epsilon > 0$, there exists a compact set $K \subset S$ such that
\[
\sup_{\boldsym{\nu} \in \calK} \boldsym{\nu}(\Omega \times (S \setminus K)) < \epsilon.
\]
\end{definition}

\begin{proposition}(\cite[Proposition 3.54]{florescu2012young})\label{prop:tightness}
Let $\calU$ be a family of measurable mappings $u:\Omega \to E$, and let 
\[
    \boldsym{\nu}^u (\dif \omega, \dif q) := \delta_{u_\omega} (\dif q) \mu(\dif \omega) \quad \mbox{for} \quad u \in \calU.
\]
Then, the family of Young measures $(\boldsym{\nu}^u)_{u \in H}$ is tight if and only if, for any $\epsilon > 0$, there exists a compact subset $K \subset E$ such that
\[
\sup_{u \in H} \mu(u^{-1} (E \setminus K)) < \epsilon.
\]
\end{proposition}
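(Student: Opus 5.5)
The plan is to prove both implications at once by a direct computation: for a Dirac-type Young measure, the quantity controlled in Definition~\ref{def:tightness-Young} is exactly the quantity $\mu(u^{-1}(E\setminus K))$ in the statement. No compactness or approximation argument should be needed. Throughout, I read the set $S$ in Definition~\ref{def:tightness-Young} as the state space $E$, and the index set $H$ in the statement as $\calU$.

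First, for each $u \in \calU$ I check that $\boldsym{\nu}^u$ is indeed a Young measure. For $A \in \mathcal{A}$ we have $\boldsym{\nu}^u(A \times E) = \int_A \delta_{u_\omega}(E)\,\mu(\dif\omega) = \mu(A)$. Measurability of $\omega \mapsto \delta_{u_\omega}(B) = \mathbf{1}_B(u_\omega)$ for Borel $B \subset E$ follows from the measurability of $u$. In particular $\omega \mapsto \delta_{u_\omega}$ is a disintegration of $\boldsym{\nu}^u$ in the sense of Proposition~\ref{prop:disintegration}.

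Next, fix a compact set $K \subset E$. By the defining formula of $\boldsym{\nu}^u$ (equivalently, the disintegration identity of Proposition~\ref{prop:disintegration} with $A = \Omega$ and $B = E\setminus K$),
\[
\boldsym{\nu}^u\big(\Omega \times (E\setminus K)\big) = \int_\Omega \mathbf{1}_{E\setminus K}(u_\omega)\,\mu(\dif\omega) = \mu\big(u^{-1}(E\setminus K)\big).
\]
Since this identity holds for every $u$ and every compact $K$, taking the supremum over $u \in \calU$ gives
\[
\sup_{u} \boldsym{\nu}^u\big(\Omega \times (E\setminus K)\big) = \sup_{u} \mu\big(u^{-1}(E\setminus K)\big).
\]
Hence, for a given $\epsilon > 0$, a compact $K$ witnesses the tightness condition of Definition~\ref{def:tightness-Young} if and only if it witnesses the condition in the statement. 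This yields both directions of the equivalence.

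There is no genuine obstacle here. The only points needing care are bookkeeping ones: the measurability of $u^{-1}(E\setminus K)$, which holds because $K$ is closed, hence Borel, and $u$ is measurable; and consistent notation between $S$ and $E$ and between $H$ and $\calU$. One might worry about a strict versus non-strict inequality, but both formulations use ``$<\epsilon$'', so no adjustment is needed.
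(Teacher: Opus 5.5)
Your proof is correct. Under the flat notion of tightness in Definition~\ref{def:tightness-Young}, where one compact $K$ must serve the whole family, the identity $\boldsym{\nu}^u(\Omega\times(E\setminus K))=\mu(u^{-1}(E\setminus K))$ holds for every $u\in\calU$ and every compact $K$. This makes both implications immediate, and reading $S$ as $E$ and $H$ as $\calU$ is the intended interpretation.

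The paper gives no proof of its own and simply cites \cite[Proposition 3.54]{florescu2012young}, so your computation supplies the verification behind that citation. An extra step would only be needed if tightness were defined through measurable compact-valued multifunctions $\omega\mapsto\Gamma_\epsilon(\omega)$, as is common in the Young-measure literature. That step is reducing such a multifunction to a single compact set, which uses finiteness of $\mu$.
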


\begin{proposition}(The Prokhorov theorem; \cite[Theorem 3.59]{florescu2012young})\label{prop:Prokhorov}
A subset $\mathcal{H} \subset \calY(E)$ is relatively compact w.r.t.~the stable topology $\calS(E)$ if and only if $\mathcal{H}$ is tight in $\calY(E)$.
\end{proposition}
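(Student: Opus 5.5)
The statement to be proved is the Prokhorov-type characterization for Young measures: a subset $\mathcal{H} \subset \calY(E)$ is relatively compact in the stable topology $\calS(E)$ if and only if it is tight. The plan is to reduce to the classical Prokhorov theorem by turning Young measures into probability measures on a product space. Since $\boldsym{\nu}(\Omega\times E)=\mu(\Omega)$, assuming $\mu$ is finite (as in our application with $\Omega=[0,T]$ and Lebesgue measure), normalise to $\mu(\Omega)=1$. Then every $\boldsym{\nu}\in\calY(E)$ is a probability measure on $\Omega\times E$ with fixed first marginal $\mu$.

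\textbf{Tightness implies relative compactness.} Take a net or sequence $(\boldsym{\nu}^\alpha)$ in $\mathcal{H}$. The functionals $\boldsym{\nu}\mapsto\int \mathbf{1}_A(\omega)f(q)\,\dif\boldsym{\nu}$ are bounded by $\|f\|_\infty\mu(A)$. By a diagonal/Tychonoff argument, pass to a subnet along which all of these converge to a limit $\Lambda(A,f)$. For fixed $f\ge 0$, the map $A\mapsto\Lambda(A,f)$ is finitely additive and dominated by $\|f\|_\infty\mu$, hence countably additive and absolutely continuous with respect to $\mu$. Choose a density $\lambda_f(\omega)$ for it. The main obstacle is showing that $f\mapsto\lambda_f(\omega)$ is, for $\mu$-a.e. $\omega$, integration against a probability measure $\nu_\omega$ on $E$. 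I would handle this in four steps:
\begin{enumerate}
\item Restrict $f$ to a countable convergence-determining family of bounded Lipschitz functions. This is available because $E$ is Polish.
\item Use linearity and positivity on this countable family, which hold outside a single $\mu$-null set.
\item Use tightness to get no mass loss. Fix the compact sets $K_\epsilon$ from tightness and Lipschitz cutoffs $f_\epsilon$ with $\mathbf{1}_{K_\epsilon}\le f_\epsilon\le 1$. Then $\int\lambda_{f_\epsilon}\,\dif\mu\ge 1-\epsilon$, so mass does not escape to infinity.
\item Apply Daniell--Stone/Riesz on compacts to obtain probability measures $\nu_\omega$.
\end{enumerate}
The limit $\boldsym{\nu}(\dif\omega,\dif q)=\nu_\omega(\dif q)\mu(\dif\omega)$ is then a Young measure, and $\boldsym{\nu}^\alpha\to\boldsym{\nu}$ stably by construction.

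An alternative route, which I would try first because it is cleaner, avoids constructing $\nu_\omega$ by hand. Equip $\Omega$ with a topology making it Polish; this is possible when $\mathcal{A}$ is countably generated, as for $[0,T]$. Tightness of $\mathcal{H}$ in $E$, together with tightness of the fixed marginal $\mu$, gives tightness of $\mathcal{H}$ as probability measures on $\Omega\times E$. Classical Prokhorov then yields a weakly convergent subsequence. The limit still has first marginal $\mu$, so it is a Young measure. Weak convergence upgrades to stable convergence because, for a Young measure with fixed marginal $\mu$, functions $\mathbf{1}_A(\omega)f(q)$ can be approximated in $L^1(\mu)$ by continuous $g(\omega)$, uniformly across the family. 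This is the standard fact that stable and weak topologies agree on Young measures (cf. \cite{florescu2012young}).

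\textbf{Relative compactness implies tightness.} Project $\mathcal{H}$ to its $E$-marginals $\boldsym{\nu}\mapsto\boldsym{\nu}(\Omega\times\cdot)\in\calP(E)$. Taking $A=\Omega$ shows this projection is continuous from $\calS(E)$ to the weak topology. The image of the relatively compact set $\mathcal{H}$ is therefore relatively weakly compact in $\calP(E)$, and by the converse direction of classical Prokhorov (valid on Polish $E$) it is tight. That tightness is exactly tightness of $\mathcal{H}$ in the sense of Definition~\ref{def:tightness-Young}.
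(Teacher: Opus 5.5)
The paper does not prove this statement; it imports it from \cite[Theorem 3.59]{florescu2012young}, where it holds for an arbitrary finite base measure $\mu$. Finiteness, which your normalisation uses, is implicit in the definition of $\calS(E)$. Your proof is therefore a genuinely different, self-contained route.

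\textbf{Converse direction.} This part is complete. With $A=\Omega$, the $E$-marginal map is continuous into the narrow topology, since bounded Lipschitz functions generate that topology, also for nets. The classical converse Prokhorov theorem on Polish $E$ then gives exactly Definition~\ref{def:tightness-Young}.

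\textbf{Forward direction, product-space route.} This is the efficient argument and covers the paper's only use, $\Omega=[0,T]$ with Lebesgue measure. Two points need correcting.
\begin{enumerate}
\item Countable generation of $\mathcal{A}$ does not by itself make $(\Omega,\mathcal{A})$ the Borel space of a Polish topology. The trace $\sigma$-algebra on a non-Borel subset of $[0,1]$ is a counterexample. So this route does not give the general statement.
\item Prokhorov only gives you subsequences. To get topological relative compactness, either invoke Proposition~\ref{prop:spaceY-metrizable}, or argue as follows. Your uniform $L^1(\mu)$-approximation of $\mathbf{1}_A$ by $g\in C_b(\Omega)$ shows that the identity $(\calY(E),\text{narrow})\to(\calY(E),\calS(E))$ is continuous. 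The narrow closure of $\mathcal{H}$ is narrowly compact and lies inside $\calY(E)$, because the marginal constraint is closed. Hence that closure is $\calS(E)$-compact.
\end{enumerate}

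\textbf{Forward direction, first route.} This is essentially the textbook proof behind the citation: Tychonoff/Banach--Alaoglu compactness, then tightness to keep the limit on $E$. It is what handles general finite $\mu$. However, your phrase ``by construction'' hides two standard steps.
\begin{enumerate}
\item You only have the integrated bound $\int\lambda_{f_\epsilon}\,\dif\mu\ge 1-\epsilon$. It must be upgraded to an a.e.\ statement, uniform in the width of the Lipschitz cut-off around $K_\epsilon$. Monotone convergence in the width, then $\epsilon_k=2^{-k}$ and Borel--Cantelli, does this. This is what yields Daniell continuity of $f\mapsto\lambda_f(\omega)$ for $\mu$-a.e.\ $\omega$, which positivity and linearity on a countable family alone do not give.
\item The identity $\Lambda(A,f)=\int_A\int_E f\,\dif\nu_\omega\,\dif\mu(\omega)$ must be extended from your countable family to all bounded Lipschitz $f$. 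This follows by approximating $f$ uniformly on $K_\epsilon$ by uniformly bounded members of the family, and using tightness along the net.
\end{enumerate}

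In short, your product-space argument buys brevity by reducing everything to classical Prokhorov, at the price of a standard Borel base space. The cited theorem, and your first route once these steps are filled in, work for any finite $\mu$.
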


\subsection{Integrable Young measures}

\begin{definition}[Integrable Young measures]\label{def:integrable-Young-meas}
An element $\boldsym{\nu} \in \calY (\Omega \times E)$ is called an integrable Young measure if for some $q_0 \in E$ 
\[
\int_{\Omega \times E} d(q_0,q) \dif \boldsym{\nu}(\omega,q) < \infty.
\]
We denote the class of integrable Young measures on $\Omega \times E$ by $\calY_1 (\Omega \times E)$, or $\calY_1 (E)$ for short. 
\end{definition}

\begin{definition}[Stable topology on $\calY_1(E)$]
The stable topology $\calS_1(E)$ on $\calY_1(E)$ is the weakest topology for which the mapping \eqref{eq:linear-functional-on-calM} is continuous for any $A \in \mathcal{A}$ and any Lipschitz continuous function $f:E \to \RR$.\footnote{This definition can be justified by Proposition 2.4.1 in \cite{castaing2004young}.}
\end{definition}


\begin{proposition}(\cite[Proposition 2.4.1]{castaing2004young})\label{prop:Portmanteau-thm-intYoung}
Let $\{\boldnu^i\}_{i \in I}$ be a net in $\calY_1(E)$ and $\boldsym{\nu}^\infty \in \calY_1(E)$. The following conditions are equivalent:
\begin{enumerate}[(i)]
    \item $\boldnu^i \to \boldsym{\nu}^\infty$ in $\calS_1(E)$,
    \item $\boldnu^i \to \boldsym{\nu}^\infty$ in $\calS(E)$ and $\int_{\Omega \times E} d(q_0,q) \dif \boldnu^i (\omega,q) \to \int_{\Omega \times E} d(q_0,q) \dif \boldsym{\nu}^\infty (\omega,q)$ for some $q_0 \in S$,
    \item $\int_{\Omega \times E} \phi \dif \boldnu^i \to \int_{\Omega \times E} \phi \dif \boldsym{\nu}^\infty$ for any measurable function $\phi : \Omega \times E \to \RR$ such that $\phi(\omega, \cdot)$ is Lipschitz continuous for each $\omega \in \Omega$ and for some $\eta \in \LL^1 (\Omega)$ and $q_0 \in S$,
    \begin{equation*}
    \abs{\phi(\omega,q)} \leq K(\eta (\omega) + d(q_0, q)) \quad
    \omega \in \Omega, q \in S.
    \end{equation*}
\end{enumerate}
\end{proposition}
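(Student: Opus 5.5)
We prove the cycle (iii) $\Rightarrow$ (i) $\Rightarrow$ (ii) $\Rightarrow$ (iii). Throughout, $\mu$ is finite, as for Lebesgue measure on $[0,T]$ in our application. The first two implications are essentially unwinding of definitions; the real work is (ii) $\Rightarrow$ (iii), which is a truncation argument built on uniform integrability of $d(q_0,\cdot)$.

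\emph{(iii) $\Rightarrow$ (i).} Fix $A\in\mathcal A$ and a Lipschitz $f:E\to\RR$ with constant $L_f$, and set $\phi(\omega,q):=\mathbf 1_A(\omega)f(q)$. Then $\phi(\omega,\cdot)$ is Lipschitz and
\[
|\phi(\omega,q)|\le |f(q_0)|+L_f\,d(q_0,q).
\]
So $\phi$ is admissible in (iii) with $\eta\equiv |f(q_0)|\vee 1\in\LL^1(\Omega)$ (here finiteness of $\mu$ is used). Convergence of $\int\phi\,\dif\boldnu^i$ for all such $\phi$ is exactly convergence in the topology $\calS_1(E)$.

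\emph{(i) $\Rightarrow$ (ii).} Bounded Lipschitz functions are Lipschitz, and $\calS(E)$ is generated by the functionals \eqref{eq:linear-functional-on-calM} with bounded Lipschitz $f$, so $\calS_1$-convergence implies $\calS$-convergence. Taking $A=\Omega$ and $f=d(q_0,\cdot)$, which is $1$-Lipschitz, gives convergence of the first moments.

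\emph{(ii) $\Rightarrow$ (iii).} I would proceed in three steps.

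First, I would upgrade $\calS$-convergence from integrands $\mathbf 1_A(\omega)f(q)$ to bounded integrands $\psi(\omega,q)$ that are measurable in $\omega$ and Lipschitz in $q$ with a uniform constant. Linearity gives convergence for $\sum_k \mathbf 1_{A_k}(\omega) f_k(q)$. A general bounded $\psi$ of this type can be approximated uniformly in $q$, in $\LL^1(\mu)$-norm, by such finite sums; here I would invoke the standard characterization of the stable topology through bounded Carathéodory integrands (\cite{castaing2004young}, which the footnote to Definition~\ref{def:stable-topology} implicitly relies on). This approximation is the main technical point. If the citation is not enough, I would build the approximation by hand: cover a compact set carrying most of the mass of $\boldnu^\infty$ (tightness, Proposition~\ref{prop:Prokhorov}) by a finite Lipschitz partition of unity, and approximate each coefficient $\omega\mapsto\psi(\omega,q_k)$ by simple functions.

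Second, I would prove uniform integrability of $d(q_0,\cdot)$:
\[
\lim_{R\to\infty}\sup_i\int d(q_0,q)\,\mathbf 1_{\{d(q_0,q)>R\}}\,\dif\boldnu^i=0,
\]
at least along the tail of the net. The function $d(q_0,\cdot)\wedge R$ is bounded and Lipschitz, so $\int d\wedge R\,\dif\boldnu^i\to\int d\wedge R\,\dif\boldnu^\infty$ by $\calS$-convergence. Combined with convergence of $\int d\,\dif\boldnu^i$ from (ii), this gives $\int (d-R)^+\dif\boldnu^i\to\int(d-R)^+\dif\boldnu^\infty$. The right-hand side is small for large $R$ because $\boldnu^\infty\in\calY_1(E)$.

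Third, given $\phi$ as in (iii), I would truncate,
\[
\phi_R:=(-KR)\vee(\phi\wedge KR).
\]
This is bounded and Lipschitz in $q$ with the same constant, so the first step gives $\int\phi_R\,\dif\boldnu^i\to\int\phi_R\,\dif\boldnu^\infty$. For the error, note that $|\phi-\phi_R|\le K(\eta+d)\mathbf 1_{\{\eta+d>R\}}$, which is at most
\[
2K\Bigl(\eta\,\mathbf 1_{\{\eta>R/2\}}+d\,\mathbf 1_{\{d>R/2\}}\Bigr)+K\eta\,\mathbf 1_{\{d>R/2\}}.
\]
The $\eta\mathbf 1_{\{\eta>R/2\}}$ term is small because its $\boldnu^i$-integral only involves the $\Omega$-marginal $\mu$ and $\eta\in\LL^1(\mu)$. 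The $d\,\mathbf 1_{\{d>R/2\}}$ term is small uniformly by the second step. For the mixed term, I would split on $\{\eta\le M\}$ and $\{\eta>M\}$: on the first set it is bounded by $M\,\boldnu^i(d>R/2)\le 2M R^{-1}\sup_i\int d\,\dif\boldnu^i$, and on the second it is controlled by $\int_{\{\eta>M\}}\eta\,\dif\mu$. Choosing $M$ and then $R$ large and applying an $\varepsilon/3$ argument gives (iii).
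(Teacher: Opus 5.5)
The paper gives no argument for this statement; it is imported from \cite[Proposition 2.4.1]{castaing2004young}. Your cycle is a correct, self-contained route. The implications (iii)$\Rightarrow$(i)$\Rightarrow$(ii) are indeed definitional once $\mu$ is finite, which the paper tacitly needs anyway, since otherwise $\int \mathbf{1}_A f\,\dif\boldnu$ need not be finite. In (ii)$\Rightarrow$(iii), three ingredients check out: the tail bound $\abs{\phi-\phi_R}\le 2K(\eta\mathbf{1}_{\{\eta>R/2\}}+d\,\mathbf{1}_{\{d>R/2\}})+K\eta\mathbf{1}_{\{d>R/2\}}$, the uniform integrability obtained from $\int(d-R)^+\dif\boldnu^i\to\int(d-R)^+\dif\boldnu^\infty$, and the split on $\{\eta\le M\}$.

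Compared with the citation, your route makes explicit that the proposition is just the bounded-integrand Portmanteau theorem for $\calS(E)$ plus a uniform-integrability truncation. If you run your first step with bounded Carath\'eodory integrands (measurable in $\omega$, merely continuous in $q$), the same truncation gives (iii) for continuous rather than Lipschitz $\phi(\omega,\cdot)$. That stronger form is what the paper actually uses in the proof of Corollary~\ref{cor:FP-continuity}, where $q\mapsto F_s(X,Z^\infty_s,q)$ is only assumed continuous (Assumption~\ref{asm:BSDE-standing-asm}).

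One sentence of your first step is false and should be dropped. For non-compact $E$, a bounded $\psi$, even uniformly Lipschitz in $q$, cannot in general be approximated \emph{uniformly in $q\in E$}, in $L^1(\mu)$, by sums $\sum_k\mathbf{1}_{A_k}(\omega)f_k(q)$. Take $\Omega=[0,1]$, $E=\RR$ and $\psi(\omega,q)=\cos(\omega q)$. Such a sum is a single function of $q$ on each cell of a finite partition, while $\sup_q\abs{\cos(\omega q)-\cos(\omega'q)}=2$ whenever $\omega/\omega'\notin\QQ$. Hence on each cell the sup-error is at least $1$ off a countable set.

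Your localized fallback is the right fix, but it omits the line that controls the net. The partition-of-unity approximant is accurate only on $\{d(\cdot,K)<\delta\}$, so you need $\boldnu^i(\Omega\times\{d(\cdot,K)\ge\delta\})$ to be eventually small, not merely $\boldnu^\infty(\Omega\times K^c)$. To get this, test $\calS$-convergence with $A=\Omega$ and the bounded Lipschitz function $1\wedge d(\cdot,K)/\delta$, whose $\boldnu^\infty$-integral is at most $\boldnu^\infty(\Omega\times K^c)$.

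Three smaller points remain. If the Lipschitz constant $L(\omega)$ of $\phi(\omega,\cdot)$ may depend on $\omega$, as the wording of (iii) allows, also discard $\{L>M\}$. This costs $O(\mu(L>M))$ uniformly in $i$, because every $\boldnu^i$ has $\Omega$-marginal $\mu$. For a net, $\sup_i\int d\,\dif\boldnu^i$ in your last step should be replaced by a bound valid beyond some index. Finally, if the $q_0$ in (iii) differs from the one in (ii), absorb $d(q_0,q_0')$ into $\eta$.
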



\begin{proposition}(\cite[Proposition 4.1.2]{castaing2004young})\label{prop:cpt-ible-Young-meas}
Let $\frakK \subset \calY_1(E)$. The following is equivalent:
\begin{enumerate}[(i)]
    \item the set $\frakK$ is relatively compact in $\calS_1(E)$,
    \item the set $\frakK$ is relatively compact in $\calS(E)$ and uniformly integrable in the sense that
    \begin{equation*}
    \lim_{K \to \infty} \sup_{\boldsym{\nu} \in \frakK} \int_{\Omega \times E} d(q_0,q) \mathbf{1}_{\{d(q_0,q) > K\}} \dif \boldsym{\nu}(\omega,q) = 0.
    \end{equation*} 
\end{enumerate}
\end{proposition}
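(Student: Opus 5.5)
We prove both implications directly from the characterization of $\calS_1(E)$-convergence in Proposition~\ref{prop:Portmanteau-thm-intYoung}. Throughout we assume, as in all our applications ($\Omega=[0,T]$ with Lebesgue measure), that $\mu$ is finite. We fix $q_0\in E$ and write $d_0(q):=d(q_0,q)$. The basic tool is the truncation $d_0\wedge K$. For $K>0$ the function $d_0\wedge K$ is bounded and Lipschitz, so by Definition~\ref{def:stable-topology} (with $A=\Omega$) the map $\boldnu\mapsto\int (d_0\wedge K)\,\dif\boldnu$ is $\calS(E)$-continuous. The plan is to pass between $\calS$ and $\calS_1$ convergence by controlling the gap $\int (d_0-K)^+\dif\boldnu$ between $\int d_0\,\dif\boldnu$ and $\int (d_0\wedge K)\,\dif\boldnu$.

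\emph{(ii)$\Rightarrow$(i).} Let $(\boldnu^i)$ be a net in $\frakK$. By $\calS(E)$-relative compactness there is a subnet, still denoted $(\boldnu^i)$, with $\boldnu^i\to\boldnu^\ast$ in $\calS(E)$.
\begin{enumerate}
\item Uniform integrability together with $\mu(\Omega)<\infty$ gives $\sup_{\boldnu\in\frakK}\int d_0\,\dif\boldnu\le K\mu(\Omega)+1$ for $K$ large enough.
\item By $\calS$-continuity, $\int(d_0\wedge K)\,\dif\boldnu^\ast=\lim_i\int(d_0\wedge K)\,\dif\boldnu^i$, which is bounded uniformly in $K$. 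Monotone convergence in $K$ then shows $\int d_0\,\dif\boldnu^\ast<\infty$, so $\boldnu^\ast\in\calY_1(E)$.
\item We split
\[
\Big|\int d_0\,\dif\boldnu^i-\int d_0\,\dif\boldnu^\ast\Big|\le \sup_{\boldnu\in\frakK}\int d_0\mathbf 1_{\{d_0>K\}}\dif\boldnu+\Big|\int (d_0\wedge K)\,\dif(\boldnu^i-\boldnu^\ast)\Big|+\int d_0\mathbf 1_{\{d_0>K\}}\dif\boldnu^\ast .
\]
Choosing $K$ large makes the first and third terms small, the first by uniform integrability and the third by integrability of $\boldnu^\ast$. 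For fixed $K$, letting $i\to\infty$ kills the middle term.
\item Hence $\int d_0\,\dif\boldnu^i\to\int d_0\,\dif\boldnu^\ast$. Criterion (ii) of Proposition~\ref{prop:Portmanteau-thm-intYoung} then yields $\boldnu^i\to\boldnu^\ast$ in $\calS_1(E)$.
\end{enumerate}

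\emph{(i)$\Rightarrow$(ii).} The $\calS_1(E)$-topology is finer than the trace of $\calS(E)$ on $\calY_1(E)$, so $\calS_1$-relative compactness implies $\calS$-relative compactness. For uniform integrability we argue by contradiction.
\begin{enumerate}
\item If uniform integrability fails, there are $\epsilon>0$, $K_n\uparrow\infty$ and $\boldnu_n\in\frakK$ with $\int d_0\mathbf 1_{\{d_0>K_n\}}\dif\boldnu_n\ge\epsilon$.
\item By (i), a subnet converges in $\calS_1(E)$ to some $\boldnu^\ast\in\calY_1(E)$.
\item Fix $K>0$. The function $(d_0-K)^+$ is Lipschitz and dominated by $d_0$, so criterion (iii) of Proposition~\ref{prop:Portmanteau-thm-intYoung} applies and gives $\int(d_0-K)^+\dif\boldnu_n\to\int(d_0-K)^+\dif\boldnu^\ast$ along the subnet.
\item Once $K_n\ge 2K$, on $\{d_0>K_n\}$ we have $d_0-K\ge d_0/2$. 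Therefore $\int(d_0-K)^+\dif\boldnu_n\ge\epsilon/2$ eventually, and so $\int(d_0-K)^+\dif\boldnu^\ast\ge\epsilon/2$ for every $K$.
\item Dominated convergence as $K\to\infty$, using $d_0\in L^1(\boldnu^\ast)$, forces $\int(d_0-K)^+\dif\boldnu^\ast\to0$, a contradiction.
\end{enumerate}

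The main obstacle is this uniformity step in (i)$\Rightarrow$(ii). Relative compactness only produces convergent subnets, never a bound uniform over the whole family, so the contradiction argument must be arranged so that a single limit $\boldnu^\ast$ controls the tails of all the bad $\boldnu_n$. This is exactly why we use the Lipschitz function $(d_0-K)^+$, which criterion (iii) allows, rather than the discontinuous indicator $d_0\mathbf 1_{\{d_0>K\}}$.
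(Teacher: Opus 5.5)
Your proof is correct. The paper gives no argument for this statement: it simply imports it from Castaing, Raynaud de Fitte and Valadier (Proposition 4.1.2). Your proposal is therefore a self-contained derivation rather than a reproduction of anything in the paper.

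You reduce both directions to the Portmanteau-type characterization in Proposition~\ref{prop:Portmanteau-thm-intYoung}. Criterion (ii) handles (ii)$\Rightarrow$(i), and criterion (iii), or simply the definition of $\calS_1(E)$, handles (i)$\Rightarrow$(ii). Around this you use the elementary truncation bounds $d_0-(d_0\wedge K)=(d_0-K)^+\le d_0\mathbf{1}_{\{d_0>K\}}$ and $d_0\mathbf{1}_{\{d_0>2K\}}\le 2(d_0-K)^+$. Using the Lipschitz test function $(d_0-K)^+$ in the contradiction argument, rather than the indicator, is exactly right.

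The citation buys brevity and whatever generality the monograph has. Your route buys transparency about which hypotheses are actually used. In particular, finiteness of $\mu$ enters only in Step 1 of (ii)$\Rightarrow$(i), and it cannot be dropped there. Take $\Omega=\RR$ with Lebesgue measure: the Young measures $\delta_{\mathbf{1}_{[0,n]}(\omega)}(\dif q)\,\dif\omega$ are uniformly integrable. Tested on sets of finite measure, they converge stably to the non-integrable $\delta_{\mathbf{1}_{[0,\infty)}(\omega)}(\dif q)\,\dif\omega$. So your explicit restriction to finite $\mu$ is the right one, and it covers the paper's use with $\Omega=[0,T]$. The appendix's ``positive measure space'' should tacitly be read as finite.

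Two topological points are used silently but are harmless.
\begin{enumerate}
\item You conclude that the $\calS_1$-closure of $\frakK$ is compact from ``every net in $\frakK$ has a subnet converging in $\calY_1(E)$''. This needs regularity, which holds because $\calS_1(E)$ is an initial topology for real-valued maps. Alternatively, run your argument on nets in the $\calS_1$-closure itself, which inherits the tail bound via $(d_0-K)^+$.
\item Passing from $\calS_1$- to $\calS$-relative compactness in (i)$\Rightarrow$(ii) uses that $\calS(E)$ is Hausdorff, or at least regular.
\end{enumerate}
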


\subsection{Embedding the space of integrable Young measures} \label{appx:embed-Young-meas}

To apply Schauder's fixed-point theorem (\cite[Corollary 17.56]{guide2006infinite}), we need to embed the set $\calP_1 \times \calY_1$ into a locally convex Hausdorff topological vector space.\footnote{This subtlety has already been highlighted in Carmona and Lacker \cite{carmona_lacker_2015} in the proof of their Theorem 3.5.} 
We address the embedding of the set $\calY_1([0,T] \times E)$ of integrable measures equipped with the $\calS_1(E)$-topology for a general Polish space $E$. Our arguments below resemble \cite[p.507]{guide2006infinite}. Similar arguments apply to $\calP_1$. Let $\calF$ be the collection of functions 
\[
    (t,q) \mapsto \mathbf{1}_A(t)\phi(q) 
\]    
for Borel measurable sets $A \subset [0,T]$ and Lipschitz continuous functions $\phi:E \to \RR$.
The product space $\RR^\calF$ is the collection of all mappings from $\calF$ to $\RR$, endowed with the product topology, i.e.~the topology of point-wise convergence on $\calF$. It is well known that $\RR^\calF$ is a locally convex Hausdorff space (c.f.\ Lemma 5.74 in \cite{guide2006infinite}).
We now define a linear map $\iota : \calY_1 \to \RR^\calF$ by
\[
\iota (\boldnu)(\bar{\phi}) := \int_0^T \int_{E} \bar{\phi} \dif \boldnu \quad
\text{for each}~\boldnu \in \calY_1~\text{and}~\bar{\phi} \in \calF.
\]
If we can prove that the above mapping is injective, then the mapping $\iota:\calY_1 \to \iota(\calY_1)$ is homeomorphism 
which shows that $\calY_1$ can be embedded into a locally convex Hausdorff space. 

To prove that $\iota$ is injective we recall the following result on the characterization of weak convergence of probability measures.

\begin{lemma}(\cite[p.15]{castaing2004young})\label{lem:test}
    Let $E$ be a Polish space and let $\operatorname{Lip}(E)$ be the class of real-valued Lipschitz continuous functions on $E$. There exists a countable family $\{\phi^n\}_{n \in \NN} \subset \operatorname{Lip}(S)$ of test functions that determines the weak topology on $\calP(E)$, that is $\nu^i \to \nu$ weakly in $\calP(S)$ if and only if $\int_S \phi^n \dif \nu^i \to \int_S \phi^n \dif \nu$ for all $n \in \NN$. 
\end{lemma}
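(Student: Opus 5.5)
We prove the lemma via the open-set characterization in the Portmanteau theorem, choosing the test functions as countably many bounded Lipschitz functions adapted to a countable base of the topology.

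\emph{Construction.} Fix a complete separable metric $d$ on $E$ (Lipschitz continuity is understood with respect to $d$) and a countable dense set $\{x_k\}_{k\in\NN}\subset E$. Let $\mathcal{B}$ be the countable collection of finite unions of open balls $B(x_k,r)$ with $k\in\NN$ and $r\in\mathbb{Q}_{>0}$. For $U\in\mathcal{B}$ and $m\in\NN$ set
\[
\phi_{U,m}(x):=\min\{1,\, m\, d(x,E\setminus U)\},
\]
with the convention $\phi_{E,m}\equiv 1$ if $U=E$. Each $\phi_{U,m}$ takes values in $[0,1]$, is $m$-Lipschitz, vanishes off $U$, and $\phi_{U,m}\uparrow \mathbf{1}_U$ pointwise as $m\to\infty$, because $U$ is open. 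Enumerate the countable family $\{\phi_{U,m}\}$ as $\{\phi^n\}_{n\in\NN}$.

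\emph{Necessity.} If $\nu^i\to\nu$ weakly, then $\int\phi^n\dif\nu^i\to\int\phi^n\dif\nu$ for every $n$, since each $\phi^n$ is bounded and continuous.

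\emph{Sufficiency.} Assume $\int\phi^n\dif\nu^i\to\int\phi^n\dif\nu$ for all $n$. By the Portmanteau theorem, which is valid for nets of probability measures on a metric space, it suffices to show $\liminf_i\nu^i(G)\ge\nu(G)$ for every open $G\subset E$.
\begin{enumerate}
\item By separability, $G$ is a countable union of rational balls $B(x_k,r)$ contained in $G$, so $G=\bigcup_N U_N$ for an increasing sequence $U_N\in\mathcal{B}$ with $U_N\subset G$.
\item For fixed $N,m$ we have $\phi_{U_N,m}\le \mathbf{1}_{U_N}\le\mathbf{1}_G$. Hence
\[
\liminf_i\nu^i(G)\ \ge\ \lim_i\int\phi_{U_N,m}\dif\nu^i\ =\ \int\phi_{U_N,m}\dif\nu .
\]
\item Letting $m\to\infty$ and then $N\to\infty$, monotone convergence gives $\int\phi_{U_N,m}\dif\nu\uparrow\nu(U_N)\uparrow\nu(G)$, which is the required inequality.
\end{enumerate}

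\emph{Main obstacle.} The delicate point is that the test family must be countable and independent of $G$, while Portmanteau has to be verified for arbitrary open sets. This is resolved by the reduction to the countable class $\mathcal{B}$ together with monotone approximation from inside. Completeness of $d$ is not needed; only separability is used.
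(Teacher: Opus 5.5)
Your proof is correct. The paper does not prove this lemma; it only cites Castaing, Raynaud de Fitte and Valadier. So there is no in-paper argument to match, and your construction (a countable base together with the open-set form of the Portmanteau theorem) supplies a complete, self-contained proof.

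The other standard route goes as follows: pass to a totally bounded compatible metric $d'$, use that the bounded $d'$-uniformly continuous functions form a separable Banach space in which Lipschitz functions are dense, and take a countable dense subset. Compared with that, your argument is more elementary. It also gives test functions that are Lipschitz for the \emph{given} metric $d$ directly, without having to arrange $d'\le C d$. That is exactly what the paper needs in Appendix~\ref{appx:embed-Young-meas}, where $E=\calP_1(\calC_d\times\RR^d)$ carries the $\calW_1$-metric and $\calF$ consists of $\calW_1$-Lipschitz functions.

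Two small points you could add.
\begin{itemize}
\item For nets, the implication ``open-set condition $\Rightarrow$ weak convergence'' should be proved via finite step-function approximations of $f$. The layer-cake-plus-Fatou proof does not transfer verbatim, since Fatou's lemma fails for nets.
\item The property actually used in the appendix is that $\int\phi^n\dif\nu_t=\int\phi^n\dif\bar\nu_t$ for all $n$ forces $\nu_t=\bar\nu_t$. This follows from your lemma applied to a constant net, since weak limits are unique. Alternatively, it follows directly because your functions $\phi_{U,m}$ recover $\nu(U)$ for $U\in\mathcal{B}$, hence, by your Step 1, $\nu(G)$ for every open $G$.
\end{itemize}
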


Let us take any two Young measures $\boldnu, \bar{\boldnu} \in \calY_1$ with respective disintegrations $\nu_\cdot, \bar{\nu}_\cdot$ such that $\iota(\boldnu) = \iota(\bar{\boldnu})$. Then, for any measurable set $A \subset [0,T]$ and any $\phi \in \operatorname{Lip}(E)$ we have that
\[
    \int_A \int_{E} \phi(q) (\nu_t - \bar{\nu}_t)(\dif q) \dif t = 0.
\]
Thus, for each $\phi \in \operatorname{Lip}(E)$ there exists a null set $N_\phi \subset [0,T]$ such that
\begin{equation}\label{null-set-phi}
    \int_{E} \phi \dif \nu_t = \int_{E} \phi \dif \bar{\nu}_t \quad
    \text{for all}~t \in N_\phi^c.        
\end{equation}
In view of Lemma \ref{lem:test}, this shows that there exists a null set $N \subset [0,T]$ such that 
\[
    \nu_t = \bar \nu_t \quad \mbox{for all} \quad t \notin N.
\]
Since the disinteration uniquely determines the Young measures (cf.~Proposition~\ref{prop:disintegration}) this shows that 
$
    \boldnu = \bar{\boldnu}
$
and hence that $\iota$ is indeed injective.

\printbibliography
\end{document}